\theoremstyle{plain}
\newtheorem{thm}{Theorem}[section]
\newtheorem{lem}[thm]{Lemma}
\newtheorem{prop}[thm]{Proposition}
\def\thefootnote{\ifnum\c@footnote>\z@\leavevmode\lower.5ex%
      \hbox{$^{\@arabic\c@footnote)}$}\fi}
\def\section{\@startsection{section}{1}%
    \z@{.7\linespacing\@plus\linespacing}{.5\linespacing}%
    {\normalfont\sc\bfseries\centering}}
\def\subsection{\@startsection{subsection}{2}%
    \z@{.5\linespacing\@plus.7\linespacing}{.3\linespacing}%
    {\normalfont\sc\bfseries}}
\theoremstyle{definition}
\theoremstyle{remark}
\def\dfrac#1#2{\displaystyle \frac{#1}{#2}}
\def\Aut{\mbox{\rm {Aut}}}
\def\det{\mbox{\rm {det}}}
\def\diag{\mbox{\rm {diag}}}
\def\Ad{\mbox{\rm {Ad}}}
\def\ad{\mbox{\rm {ad}}}
\def\Iso{\mbox{\rm {Iso}}}
\def\Ker{\mbox{\rm {Ker}}}
\def\tr{\mbox{\rm {tr}}}
\def\ti{\tilde}
\def\dsum{\displaystyle \sum}
\def\dfrac#1#2{\displaystyle \frac{#1}{#2}}
\def\R{\mbox{\boldmath $R$}}
\def\Z{\mbox{\boldmath $Z$}}
\def\0{\mbox{\boldmath {0}}}    
\def\1{\mbox{\boldmath {1}}}      
\def\2{\mbox{\boldmath {2}}}      
\def\3{\mbox{\boldmath {3}}}      
\def\4{\mbox{\boldmath {4}}}      
\def\5{\mbox{\boldmath {5}}}      
\def\6{\mbox{\boldmath {6}}}      
\def\7{\mbox{\boldmath {7}}}      
\def\8{\mbox{\boldmath {8}}}      
\def\9{\mbox{\boldmath {9}}}      
\def\a{\mbox{\boldmath $a$}}
\def\dd{\mbox{\boldmath $d$}}
\def\e{\mbox{\boldmath $e$}}
\def\u{\mbox{\boldmath $u$}}
\def\v{\mbox{\boldmath $v$}}
\def\w{\mbox{\boldmath $w$}}
\def\x{\mbox{\boldmath $x$}}
\def\y{\mbox{\boldmath $y$}}
\begin{document}
	
\title[Realizations of inner automorphisms of order four on $E_8$ 
Part II]
{Realizations of inner automorphisms of order four and fixed points subgroups by them on the connected compact exceptional Lie group $E_8$,  Part II}

\author[Toshikazu Miyashita]{By \vspace{3mm} \\ \vspace{3mm} Toshikazu Miyashita}

\subjclass[2010]{ 53C30, 53C35, 17B40.}

\keywords{4-symmetric spaces, exceptional Lie groups}

\address{1365-3 Bessho onsen    \endgraf
	Ueda City                     \endgraf
	Nagano Prefecture 386-1431    \endgraf
	Japan}
\email{anarchybin@gmail.com}

\begin{abstract}
The compact simply connected Riemannian 4-symmetric spaces were 
classified by J.A. Jim{\'{e}}nez according to type of the Lie algebras.
As homogeneous manifolds, these spaces are of the 
form $G/H$, where $G$ is a connected compact simple Lie group with 
an automorphism $\tilde{\gamma}$ of order four on $G$ and  $H$ is a 
fixed points subgroup $G^\gamma$ of $G$. According to the 
classification by J.A. Jim{\'{e}}nez, there exist seven compact 
simply connected Riemannian 4-symmetric spaces $ G/H $ in the case where $ G $ is of type $ E_8 $. In the present article, 
we give the explicit form of automorphisms  $\tilde{w}_{{}_4}
\tilde{\upsilon}_{{}_4}$ and $\tilde{\mu}_{{}_4}$ of order four on 
$E_8$ induced by the $C$-linear transformations $w_{{}_4}, 
\upsilon_{{}_4}$ and $\mu_{{}_4}$ of the 248-dimensional vector 
space ${\mathfrak{e}_8}^{C}$, respectively.  
Further, we determine the structure of these fixed points subgroups 
$(E_8)^{w_{{}_4}}, (E_8)^{{}_{\upsilon_{{}_4}}}$ and $(E_8)^{{}
_{\mu_{{}_4}}}$ of $ E_8 $.
These amount to the global realizations of three spaces 
among seven Riemannian 4-symmetric spaces $ G/H $ above 
corresponding to the Lie algebras $ \mathfrak{h}=i\bm{R} \oplus 
\mathfrak{su}(8), i\bm{R} \oplus \mathfrak{e}_7$ and $\mathfrak{h}=
\mathfrak{su}(2) \oplus \mathfrak{su}(8)$, where $ \mathfrak{h}={\rm 
Lie}(H) $. 
\end{abstract}

\maketitle

\section {Introduction}
Let $G$ be a Lie group and $H$ a compact subgroup of $G$. A 
homogeneous space $G/H$ with $G$-invariant Riemannian metric $g$ is 
called a Riemannian $4$-symmetric space if there exists an 
automorphism $\tilde{\gamma}$ of order four on $G$ such that $({G^
\gamma})_{0} \subset H \subset G^\gamma$, where $G^\gamma$ and $({G^
\gamma})_{0}$ are the fixed points subgroup of $G$ by $
\tilde{\gamma}$ and its identity component, respectively.

Now, for the exceptional compact Lie group of type $E_8$, as in Table  
below, there exist seven cases of the compact simply connected 
Riemannian $4$-symmetric spaces which were classified by J.A. 
Jim\'{e}nez as mentioned in abstract (\cite{Jim}). Accordingly, our 
interest is to realize the groupfication for the classification as 
Lie algebra. 

Our results of groupfication corresponding to the Lie algebra $
\mathfrak{h}$ in Table are given as follows.

		\begin{longtable}[c]{clll}
			\noalign{\hrule height 1pt}
			\noalign{\vspace{-1pt}} 
			&&&\\[-6pt]
			Case & \hspace*{7mm}  $\mathfrak{h}$  &  $\tilde{\gamma}
			$  &  \hspace*{15mm}  $H=G^\gamma$ 
            \\
            \noalign{\vspace{3pt}}
            \noalign{\hrule height 0.5pt}
			&&&\\[-8pt]
			1 
			& $\mathfrak{so}(6) \oplus \mathfrak{so}(10)$ 
			& $\tilde{\sigma}'_{{}_4}$ 
			& $(Spin(6) \times Spin(10))/\bm{Z}_4$
			\\[6pt]
			2 
			& $i\bm{R} \oplus \mathfrak{su}(8)$  
			& $\tilde{w}_{{}_4}$ 
			& $(U(1) \times SU(8))/\bm{Z}_{24}$
			\\[6pt]		
			3 
			& $i\bm{R} \oplus \mathfrak{e}_7 $ 
			& $\tilde{\upsilon}_{{}_4}$ 
			& $(U(1) \times E_7)/\bm{Z}_2$
			\\[6pt]
			4 
			& $\mathfrak{su}(2) \oplus \mathfrak{su}(8)$ 
			& $\tilde{\mu}_{{}_4}$ 
			& $(SU(2) \times SU(8))/\bm{Z}_4$
			\\[6pt]
			5 
			& $\mathfrak{su}(2) \oplus i\bm{R} \oplus \mathfrak{e}_6$ 
			& $\tilde{\omega}_{{}_4}$ 
			& $(SU(2) \times U(1) \times E_6)/(\bm{Z}_2 \times \bm{Z}_3)$
			\\[6pt]
			6 
			& $i\bm{R} \oplus \mathfrak{so}(14)$ 
			& $\tilde{\kappa}_{{}_4}$ 
			& $(U(1) \times Spin(14))/\bm{Z}_4$
			\\[6pt]
			7 
			& $\mathfrak{su}(2) \oplus i\bm{R} \oplus \mathfrak{so}(12)$ 
			& $\tilde{\varepsilon}_{{}_4}$ 
			& $(SU(2) \times  U(1) \times Spin(12))/(\bm{Z}_2 \times \bm{Z}_2)$
			\\[6pt]
			\noalign{\hrule height 1pt}
		\end{longtable}


In \cite{miya2}, the author showed the group realizations for Case 
$ 1 $ in Table. In the present article, we state the 
realizations of the group $ H $ for Case $ 2,3 $ and $ 4 $. The 
remaining cases will be shown in a forthcoming article \cite{miya3} 
by the author.

Finally, the author would like to say that the feature of this 
article is to give elementary proofs of the isomorphism of groups 
by using the homomorphism theorem except several proofs, and of the 
connectedness of groups as topological spaces. 

This article is a continuation of \cite{miya2}, hence we start from 
Section 4. We refer the reader to \cite{miya2} for preliminary 
results and also to \cite{miya1}, \cite{miya2}, \cite{iy3}, \cite{iy1} or 
\cite{iy0} for notations.
Note that we change the numbering of Case 5 and Case 6 in 
\cite{miya2} to the numbering of Case 3 and Case 4 in the present 
article, respectively.


\setcounter{section}{3}

\section { Case 2. The automorphism $\tilde{w}_{{}_4}$ of order four 
and the group $(E_8)^{w_{{}_4}}$}

In this case, we will study the connected compact exceptional Lie group 
of type 
$ E_8 $ constructed by S. Gomyo (\cite{go}). With reference to 
\cite{go}, we rewrite its contents as detailed as possible. In 
particular, we give some proofs of lemma and theorem in which the proofs 
are omitted in \cite{go}.

First, in order to construct another $C$-Lie algebra of 
type $E_8$, we investigate the properties of the exterior $
C$-vector space ${\varLambda}^3(C^9)$. Let $\e_1, \cdots, \e_9$ be the 
canonical $C$-basis of the nine dimensional $C$-vector 
space $
C^9$ and $
(\x, \y)$ the inner product in $C^9$ satisfying $(\e_i, 
\e_j) = 
\delta_{ij}$, where $\delta_{ij}$ means the Kronecker's delta. In $
{\varLambda}^3(C^9)$, we define an inner product by
\begin{align*}
	\begin{array}{c}
    (\u_1 \wedge \u_2 \wedge \u_3, \v_1 \wedge \v_2 \wedge \v_3) = \det\Big((\u_i, \v_j)\Big), 
	\vspace{1mm}\\
    (a, b) = ab, \,\, a, b \in {\varLambda}^0(C^9) = 
    C.
	\end{array}
\end{align*}
Here, $\e_{i_1} \wedge \e_{i_2} \wedge \e_{i_3}, i_1 < i_2 < i_3$ 
forms an orthonormal $C$-basis of ${\varLambda}^3(C^9)$.
For $\u \in {\varLambda}^3(C^9)$, we define an element $
\ast \u \in {\varLambda}^6(C^9)$ by
\begin{align*}
	(*\u, \v) = (\u \wedge \v, \e_1 \wedge \cdots \wedge \e_9), \,\, 
	\v \in {\varLambda}^6(C^9). 
\end{align*}
Note that the inner product $(*\u, \v)$ is defined as in the case $
{\varLambda}^3(C^9)$ above. Then, $*$ induces a $C
$-linear isomorphism
\begin{align*}
	* : {\varLambda}^3(C^9) \to {\varLambda}^6 (C
	^9) 
\end{align*}
and satisfies the following identity 
\begin{align*}
	   *^2\u = \u, \,\, \u \in {\varLambda}^3(C^6). 
\end{align*}

The Lie algebra $\mathfrak{sl}(9,C)$ of the group $SL(9,
C)$ acts on ${\varLambda}^3(C^9)$ as follows:
\begin{align*}
	&D(\u_1 \wedge \u_2 \wedge \u_3) =D\u_1 \wedge \u_2 \wedge \u_3
                                +\u_1 \wedge D\u_2 \wedge \u_3 
                                +\u_1 \wedge \u_2 \wedge D\u_3,
	\\
	&D(1) = 0,\,\, D \in \mathfrak{sl}(9,C). 
\end{align*}

\begin{lem}[{\cite[Lemma 1.1]{go}}]\label{lemma 4.1}
	For $D \in \mathfrak{sl}(9, C)$ and $\u, \v \in 
	{\varLambda}^3(C^9)$, we have the following relational 
	formulas
	\begin{align*}
		(D\u, \v) = (\u, {}^tD\v), \,\, *(D\u) = -{}^tD(*\u).
	\end{align*}
	 
%

\end{lem}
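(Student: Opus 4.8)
The plan is to verify each of the two identities by reducing to the behaviour of decomposable elements $\u = \u_1 \wedge \u_2 \wedge \u_3$ and $\v = \v_1 \wedge \v_2 \wedge \v_3$, since these span ${\varLambda}^3(C^9)$ and both sides of each identity are $C$-bilinear (respectively $C$-linear) in $\u$ (and $\v$). For the first identity $(D\u, \v) = (\u, {}^tD\v)$, I would expand $D\u$ using the derivation rule stated just above the lemma, so that $(D\u,\v)$ becomes a sum of three determinants of the $3\times 3$ matrix $\big((\u_i,\v_j)\big)$ in which one row has been replaced by $(D\u_i, \v_j)$. Using $(D\u_i,\v_j) = (\u_i, {}^tD\v_j)$ — which is just the definition of the transpose with respect to the inner product $(\x,\y)$ on $C^9$ (bilinear, not Hermitian, so no conjugation issue) — each such determinant equals the corresponding determinant with a column of $\big((\u_i,\v_j)\big)$ replaced by $(\u_i, {}^tD\v_j)$. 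Summing the three gives exactly the derivative of $\det\big((\u_i,\v_j)\big)$ with ${}^tD$ acting on the $\v$'s, i.e. $(\u, {}^tD\v)$. The scalar part $D(1)=0$ makes the identity trivial on ${\varLambda}^0(C^9)$, and the identity extends to all of ${\varLambda}^3(C^9)$ by bilinearity.

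For the second identity $*(D\u) = -{}^tD(*\u)$, I would characterize both sides via the defining relation of $*$: for every $\v \in {\varLambda}^6(C^9)$,
\begin{align*}
	(*(D\u), \v) = ((D\u)\wedge \v,\ \e_1 \wedge \cdots \wedge \e_9).
\end{align*}
The key observation is that $D$, extended as a derivation to the full exterior algebra ${\varLambda}^\bullet(C^9)$, annihilates the top form $\e_1\wedge\cdots\wedge\e_9$ precisely because $\tr D = 0$ (this is where $D \in \mathfrak{sl}(9,C)$, not just $\mathfrak{gl}(9,C)$, is used). Hence applying the derivation rule to $D(\u\wedge\v)$ and pairing with the top form gives
\begin{align*}
	0 = (D(\u\wedge\v),\ \e_1\wedge\cdots\wedge\e_9) = ((D\u)\wedge\v,\ \e_1\wedge\cdots\wedge\e_9) + (\u\wedge(D\v),\ \e_1\wedge\cdots\wedge\e_9),
\end{align*}
so $(*(D\u),\v) = -(\u\wedge(D\v),\ \e_1\wedge\cdots\wedge\e_9) = -(*\u, D\v)$. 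Now I apply the first identity of the lemma (with the roles set up for ${\varLambda}^6$ analogously to ${\varLambda}^3$) to move $D$ across the inner product: $-(*\u, D\v) = -({}^tD(*\u), \v)$. Since this holds for all $\v \in {\varLambda}^6(C^9)$ and the inner product there is nondegenerate, we conclude $*(D\u) = -{}^tD(*\u)$.

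The only slightly delicate point, and the one I would state carefully, is the claim that a derivation $D$ of ${\varLambda}^\bullet(C^9)$ coming from an endomorphism of $C^9$ kills the top form iff $\tr D = 0$: acting on $\e_1\wedge\cdots\wedge\e_9$ by the Leibniz rule produces $\sum_i \e_1\wedge\cdots\wedge D\e_i\wedge\cdots\wedge\e_9 = (\tr D)\,\e_1\wedge\cdots\wedge\e_9$. Everything else is the routine multilinear-algebra bookkeeping of differentiating a Gram determinant and invoking nondegeneracy of the inner products, together with the fact that the pairing $(\x,\y)$ on $C^9$ is symmetric bilinear so that ${}^tD$ is genuinely the matrix transpose. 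I do not anticipate a real obstacle; the main care is simply in keeping the extension of $D$ as a derivation (rather than as the induced map on each ${\varLambda}^k$, which differs) straight when writing out $D(\u\wedge\v)$.
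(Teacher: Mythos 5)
Your proof is correct and follows essentially the same route as the paper's: reduce to decomposable elements, obtain the first identity as the derivative of the Gram determinant $\det\bigl((\u_i,\v_j)\bigr)$, and derive the second by pairing $*(D\u)$ against an arbitrary $\w\in\varLambda^6(C^9)$, expanding $D(\u\wedge\w)$ by the Leibniz rule, killing the top-degree term via $\tr D=0$, and applying the adjoint relation on $\varLambda^6$. The only cosmetic difference is that the paper moves $D$ across the inner product and uses $\tr({}^tD)=0$ on the top form, whereas you observe directly that the derivation annihilates $\e_1\wedge\cdots\wedge\e_9$; both hinge on the same fact that the induced action on the top exterior power is multiplication by the trace.
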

\begin{proof}
 In order to prove this lemma, it is sufficient to prove these above 
 for $\u=\u_1 \wedge \u_2 \wedge \u_3, \v=\v_1 \wedge \v_2 \wedge 
 \v_3 \in \varLambda^3 (C^9)$. 
For the former formula, it follows that
  \begin{align*}
	(D\u,\v)=\det\Bigl( (D\u_i, \v_j)\Bigr)=\det\Bigl( (\u_i, {}^t
	D\v_j)\Bigr)=(\u,{}^tD\v).
 \end{align*}
 For the latter formula,  
 using the relational formula $D(\u \wedge \w)=D\u 
 \wedge \w +\u \wedge D\w$, we have the following
 \begin{align*}
		(*(D\u), \w)
		&=(D\u \wedge \w, \e_1 \wedge \cdots \wedge \e_9)
		\\
		&=(D(\u \wedge \w)-\u \wedge D\w, \e_1 \wedge \cdots \wedge \e_9)
		\\
		&=(D(\u \wedge \w), \e_1 \wedge \cdots \wedge \e_9)
		-(\u \wedge D\w, \e_1 \wedge \cdots \wedge \e_9)
		\\
		&=(\u \wedge \w, {}^tD(\e_1 \wedge \cdots \wedge \e_9))
		-(\u \wedge D\w, \e_1 \wedge \cdots \wedge \e_9)
		\\
		&=(\u \wedge \w, \sum_{i=1}^{9}\e_1 \wedge \cdots \wedge {}^tD\e_i \wedge \cdots \wedge \e_9)
		-(\u \wedge D\w, \e_1 \wedge \cdots \wedge \e_9)
		\\
		&=(\u \wedge \w, \tr({}^tD)(\e_1 \wedge \cdots \wedge \e_9))
		-(\u \wedge D\w, \e_1 \wedge \cdots \wedge \e_9)
		\\
		&=-(\u \wedge D\w, \e_1 \wedge \cdots \wedge \e_9)
		\\
		&=(-*\u,D \w)
		\\
		&=-({}^tD (*\u),\w)
 \end{align*}
 for every $\w \in {\varLambda}^6(C^9)$. Hence we have $*(D
 \u) = -{}^tD(*\u)$.

\if0
(1) Let $\u=\x_1 \wedge \x_2 \wedge \cdots \wedge \x_k, \v=\y_1 \wedge \y_2 \wedge \cdots \wedge \y_k \in \varLambda^k (C^n)$. Then we have that
\begin{align*}
(A \u, {}^t A^{-1} \v)
= \det \biggl( (A\x_i, {}^t A^{-1} \y_j) \biggr) =\det \biggl( (A^{-1}A\x_i,  \y_j) \biggr)
= \det \biggl( (\x_i,  \y_j) \biggr) =(\u, \v).
\end{align*}
The relational formula $(D\u, \v) + (\u, -{}^tD\v) = 0$ is also shown as above.
\vspace{1mm}

(2) First, it follows from the definition of $*$ that
\begin{align*}
(* (A\u ), \v )
&= (A\u \wedge \v, \e_1 \wedge \e_2 \wedge \cdots \wedge \e_n ) 
\\[1mm]
&= ( A(\u \wedge  A^{-1}\v),  \e_1 \wedge \e_2 \wedge \cdots \wedge\e_n)
\\[1mm]
&= ( \u \wedge  A^{-1}\v,  {}^t\! A(\e_1 \wedge \e_2 \wedge \cdots \wedge \e_n))
\\[1mm]
&= ( \u \wedge  A^{-1}\v,  {}^t \!A\e_1 \wedge {}^t\! A \e_2 \wedge \cdots \wedge {}^t A\e_n)
\\[1mm]
&= ( \u \wedge  A^{-1}\v,  \det\,({}^t\!A)(\e_1 \wedge \e_2 \wedge \cdots \wedge \e_n))
\\[1mm]
&= ( \u \wedge  A^{-1}\v,  \e_1 \wedge \e_2 \wedge \cdots \wedge \e_n),
\end{align*}
on the other hand, similarly it follows that
\begin{align*}
({}^t \!A(*\u ), \v )&=(*\u, A^{-1}\v )= ( \u \wedge  A^{-1}\v,  \e_1 \wedge \e_2 \wedge \cdots \e_n).
\end{align*}
Hence we have that $(* (A\u ), \v )=({}^t\! A^{-1}(*\u ), \v ), \,\, \v \in \varLambda^k(C^n)$, that is, $* (A\u )={}^t \!A(*\u )$. 
The relational formula $*(D\u) = -{}^t\!D(*\u)$ is also shown as above.
\fi
\end{proof}

For $\u, \v \in {\varLambda}^3(C^9)$, we define a $
C$-linear transformation $\u \times \v$ of $C^9$ 
by
\begin{align*}
(\u \times \v)\x = *(\v \wedge *(\u \wedge \x)) + \dfrac{2}{3}(\u, 
\v)\x, \,\, \x \in C^9.
\end{align*}

Then we have the following lemma.

\begin{lem}[{\cite[p.597]{go}}]\label{lemma 4.2}
	 We have $\tr \,(\u \times \v) = 0$.
\end{lem}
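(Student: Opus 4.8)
The plan is to compute the trace directly on the canonical orthonormal $C$-basis $\e_1,\dots,\e_9$ of $C^9$, using nothing beyond the defining relation of $*$ and the skew-symmetry of the exterior product. Since $(\e_i,\e_j)=\delta_{ij}$, any $C$-linear transformation $T$ of $C^9$ satisfies $\tr\,T=\sum_{i=1}^{9}(\e_i,T\e_i)$, and hence
\begin{align*}
	\tr\,(\u\times\v)=\sum_{i=1}^{9}\bigl(\e_i,\,*(\v\wedge*(\u\wedge\e_i))\bigr)+\dfrac{2}{3}(\u,\v)\sum_{i=1}^{9}(\e_i,\e_i).
\end{align*}
The last sum equals $9$, so the correction term contributes $6(\u,\v)$, and it remains only to show that the first sum equals $-6(\u,\v)$.

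First I would simplify the $i$-th summand. Since $*(\v\wedge*(\u\wedge\e_i))$ lies in ${\varLambda}^1(C^9)$, the defining relation of $*$ (applied in degree $8$) together with the symmetry of the inner product gives
\begin{align*}
	\bigl(\e_i,\,*(\v\wedge*(\u\wedge\e_i))\bigr)=\bigl(\v\wedge*(\u\wedge\e_i)\wedge\e_i,\ \e_1\wedge\cdots\wedge\e_9\bigr).
\end{align*}
As $*(\u\wedge\e_i)\in{\varLambda}^5(C^9)$, moving $\e_i$ across it produces the sign $(-1)^5=-1$, so this equals $-\bigl((\v\wedge\e_i)\wedge*(\u\wedge\e_i),\ \e_1\wedge\cdots\wedge\e_9\bigr)$. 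Now $\v\wedge\e_i$ and $\u\wedge\e_i$ lie in ${\varLambda}^4(C^9)$, and for $\beta,\gamma\in{\varLambda}^4(C^9)$ one has $(\gamma\wedge*\beta,\,\e_1\wedge\cdots\wedge\e_9)=(\beta,\gamma)$; I would verify this in one line on the basis vectors $\e_D$ ($D$ a $4$-element index set), writing $*\e_D=\sigma_D\,\e_{D^{\,c}}$ with $\sigma_D$ defined by $\e_D\wedge\e_{D^{\,c}}=\sigma_D\,\e_1\wedge\cdots\wedge\e_9$ and $\sigma_D^{\,2}=1$ (alternatively it follows from $*^{2}=\mathrm{id}$, which holds in every degree here because $k(9-k)$ is even when $n=9$). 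Consequently $\bigl(\e_i,\,*(\v\wedge*(\u\wedge\e_i))\bigr)=-(\u\wedge\e_i,\ \v\wedge\e_i)$.

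It then remains to evaluate $\sum_{i=1}^{9}(\u\wedge\e_i,\,\v\wedge\e_i)$, and by bilinearity it suffices to take $\u=\e_A$, $\v=\e_B$ with $A,B$ $3$-element index sets. The $i$-th term vanishes unless $i\notin A\cup B$ and $A\cup\{i\}=B\cup\{i\}$, i.e.\ unless $A=B$, in which case each of the $9-3=6$ admissible indices contributes $1$; thus $\sum_{i=1}^{9}(\u\wedge\e_i,\,\v\wedge\e_i)=6(\u,\v)$. Combining the two computations gives $\tr\,(\u\times\v)=-6(\u,\v)+6(\u,\v)=0$.

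The only delicate point in this scheme is the sign bookkeeping in the middle step — the factor $(-1)^5$ coming from the reordering, and the identity $(\gamma\wedge*\beta,\,\e_1\wedge\cdots\wedge\e_9)=(\beta,\gamma)$ for $4$-vectors; everything else is routine. One could instead argue that $(\u,\v)$ and $\tr\,(\u\times\v)$ are both $SL(9,C)$-invariant symmetric bilinear forms on the irreducible module ${\varLambda}^3(C^9)$, hence proportional, with the constant pinned down by a single $\e_A$; but the direct computation above fits the elementary style of this article.
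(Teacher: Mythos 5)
Your proposal is correct and follows essentially the same route as the paper: both compute $\tr\,(\u\times\v)=\sum_{i}((\u\times\v)\e_i,\e_i)$, reduce the first summand to $-(\e_i\wedge\u,\e_i\wedge\v)$ via the identity $(*(\v\wedge*(\u\wedge\x)),\y)=-(\x\wedge\u,\y\wedge\v)$, and then verify $\sum_i(\e_i\wedge\u,\e_i\wedge\v)=6(\u,\v)$ so that the two contributions cancel. The only differences are cosmetic — the paper derives the key identity using $*^2=\mathrm{id}$ on $6$-forms and evaluates the final sum by expanding a $4\times4$ Gram determinant, whereas you work through the degree-$8$ defining relation of $*$ and count directly on basis elements.
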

\begin{proof}
	In order to prove this lemma, we have to show the following relational formula
	\begin{align*}
		( *(\v \wedge *(\u \wedge \x)), \y)=-(\x \wedge \u, \y 
		\wedge \v),\,\, \x, \y \in C^9.
	\end{align*}
	Indeed, it follows from $*\u=\u, \u \in {\varLambda}
	^3(C^9)$ that
	\begin{align*}
		( *(\v \wedge *(\u \wedge \x)), \y)&=( (\v \wedge *(\u \wedge \x)) \wedge \y, \e_1 \wedge \cdots \wedge \e_9)
		\\
		&=-( *(\u \wedge \x) \wedge (\v \wedge \y), \e_1 \wedge \cdots \wedge \e_9)
		\\
		&=-(*^2 (\u \wedge \x), \v \wedge \y)
		\\
		&=-(\u \wedge \x, \v \wedge \y)
		\\
		&=-(\x \wedge \u, \y \wedge \v).
	\end{align*}
 Using $\tr \,(\u \times \v) =\displaystyle{\sum_{i=1}^9} ((\u 
 \times \v)\e_i, \e_i)$, we do straightforward computation of $\tr 
 \,
 (\u \times \v)$: 
	\begin{align*}
		\tr \,(\u \times \v)&=\sum_{i=1}^9 ((\u \times \v)\e_i, \e_i)
		=\sum_{i=1}^9 (*(\v \wedge *(\u \wedge \e_i))+\,\dfrac{2}{3}(\u, \v)\e_i, \e_i)
		\\[0mm]
		&=\sum_{i=1}^9 \left( (*(\v \wedge *(\u \wedge \e_i)), \e_i)+\dfrac{2}{3}(\u, \v)(\e_i, \e_i)\right)
		\\[0mm]
		&=\sum_{i=1}^9 ( (*(\v \wedge *(\u \wedge \e_i)), \e_i)+6(\u, \v)
		\\[0mm]
		&=-\sum_{i=1}^9 (\e_i \wedge \u, \e_i \wedge \v)+6(\u, \v)\,\,
		\biggl( \begin{array}{l}\u=\u_1 \wedge \u_2 \wedge \u_3 \\ 
		\v=\v_1 \wedge \v_2 \wedge \v_3
		\end{array}\biggr)
		\\[0mm]
		&=-\sum_{i=1}^9 \det\,
		\begin{pmatrix}
		(\e_i, \e_i) & (\e_i, \v_1) & (\e_i, \v_2) & (\e_i, \v_3)  \\
    	(\u_1, \e_i) & (\u_1, \v_1) & (\u_1, \v_2) & (\u_1, \v_3) \\
    	(\u_2, \e_i) & (\u_2, \v_1) & (\u_2, \v_2) & (\u_2, \v_3) \\
    	(\u_3, \e_i) & (\u_3, \v_1) & (\u_3, \v_2)  & (\u_3, \v_3) \\
    	\end{pmatrix}+6(\u, \v)
		\\[0mm] 
 		&=-\sum_{i=1}^9 \Bigl((\e_1,\e_i)(\u,\v)-(\e_i,\v_1)(\e_i \wedge \v_2 \wedge \v_3)+(\e_i,\v_2)(\e_i \wedge \v_1 \wedge \v_3)
 		\\
 		&\hspace*{64mm}-(\e_i,\v_3)(\e_i \wedge \v_1 \wedge \v_2)\Bigr)+6(\u, \v)
		\\[0mm] 
		&=-\Bigl(9(\u,\v)-(\u,\v_1 \wedge \v_2 \wedge \v_3)
		+(\u,\v_2 \wedge \v_1 \wedge \v_3)
		-(\u,\v_3 \wedge \v_1 \wedge \v_2)\Bigr)
		\\
		&\hspace*{100mm}+6(\u,\v)
		\\
		&=-\Bigl(9(\u,\v)-(\u,\v_1 \wedge \v_2 \wedge \v_3)
		-(\u,\v_1 \wedge \v_2 \wedge \v_3)
		-(\u,\v_1 \wedge \v_2 \wedge \v_3)\Bigr)
		\\
		&\hspace*{100mm}+6(\u,\v)
 		\\
		&=-\Bigl( 9(\u,\v)-(\u,\v)-(\u,\v)-(\u,\v)\Bigr)+6(\u,\v)
		\\
		&=0.
	\end{align*}
\end{proof}

Note that from Lemma \ref{lemma 4.2}, $\u \times \v$ can be regarded 
as an element of $\mathfrak{sl}(9,C)$ with respect to the 
canonical basis of $C^9$. 

\begin{lem}[{\cite[Lemma 1.2]{go}}]\label{lemma 4.3}
	For $A \in SL(9, C), D \in \mathfrak{sl}(9, C)
	$ and $\u, \v \in {\varLambda}^3(C^9)$, we have the 
	following 
	relational formulas

	{\rm (1)} $\; A(\u \times \v)A^{-1} = A\u \times {}^tA^{-1}\v, \,\, [D, \u \times \v] = D\u \times \v + \u \times (-{}^tD\v)$,

	{\rm (2)} $\; {}^t(\u \times \v) = \v \times \u$,
	
	{\rm (3)} $\;\tau(\u \times \v) = \tau\u \times \tau\v$, where 
	$\tau$ is the complex conjugation on ${\varLambda}^3(C
	^9)$,

	{\rm (4)} $\;\tr(D(\u \times \v)) = (D\u, \v)$.
\end{lem}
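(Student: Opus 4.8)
The overall approach is to dispatch the four formulas in turn, in each case reducing to an identity in $\varLambda^{\bullet}(C^9)$ that one may check for decomposable $\u=\u_1\wedge\u_2\wedge\u_3$ and $\v=\v_1\wedge\v_2\wedge\v_3$. For (1) I would first record the finite counterparts of Lemma~\ref{lemma 4.1}: for $A\in SL(9,C)$ and $\w\in\varLambda^{k}(C^9)$ one has $(A\u,{}^tA^{-1}\v)=(\u,\v)$ and $*(A\w)={}^tA^{-1}(*\w)$, proved word for word as Lemma~\ref{lemma 4.1} using $\det A=1$ (so that $A$, and hence ${}^tA$, acts as the identity on $\varLambda^{9}(C^9)$; note these are consistent with $*(D\u)=-{}^tD(*\u)$ upon $A=\exp(tD)$). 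Then, beginning from $(A(\u\times\v)A^{-1})\x=A\bigl(*(\v\wedge*(\u\wedge A^{-1}\x))\bigr)+\tfrac{2}{3}(\u,\v)\x$, I would rewrite $\u\wedge A^{-1}\x=A^{-1}(A\u\wedge\x)$ and push $*$ repeatedly past $A^{-1}$ and ${}^tA$ using the two displayed identities; the scalar factor $A^{-1}$ and the outer $A$ cancel, and what remains is exactly $*({}^tA^{-1}\v\wedge*(A\u\wedge\x))+\tfrac{2}{3}(A\u,{}^tA^{-1}\v)\x=(A\u\times{}^tA^{-1}\v)\x$. The bracket identity then follows either by differentiating this at $A=\exp(tD)$, $t=0$ (using $C$-bilinearity of $\times$ and $\frac{d}{dt}\big|_{0}\,({}^t\exp(tD))^{-1}=-{}^tD$), or directly from $*(D\u)=-{}^tD(*\u)$ together with the Leibniz rule for $D$ on $\varLambda^{\bullet}(C^9)$; in the direct computation the two ${}^tD$-on-scalar terms cancel precisely because $(D\u,\v)=(\u,{}^tD\v)$.

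For (2) it suffices to verify $((\u\times\v)\x,\y)=(\x,(\v\times\u)\y)$ for all $\x,\y\in C^9$. Invoking the identity $(*(\v\wedge*(\u\wedge\x)),\y)=-(\x\wedge\u,\y\wedge\v)$ proved inside Lemma~\ref{lemma 4.2}, together with the symmetry of the bilinear forms on $\varLambda^4(C^9)$, $\varLambda^3(C^9)$ and $C^9$, one sees that both sides equal $-(\x\wedge\u,\y\wedge\v)+\tfrac{2}{3}(\u,\v)(\x,\y)$. For (3), note that $\tau$ commutes with $\wedge$ and fixes every $\e_i$, that it commutes with $*$ (immediate from the defining relation for $*$ and $\overline{(\alpha,\beta)}=(\tau\alpha,\tau\beta)$), and that $\overline{(\u,\v)}=(\tau\u,\tau\v)$; since $\tau(\u\times\v)$ is the $C$-linear transformation $\x\mapsto\tau\bigl((\u\times\v)(\tau\x)\bigr)$, applying $\tau$ to the defining formula and moving it through the wedges and $*$ yields $(\tau\u\times\tau\v)\x$.

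For (4), write $\tr\bigl(D(\u\times\v)\bigr)=\sum_{i=1}^{9}\bigl((\u\times\v)\e_i,{}^tD\e_i\bigr)$ (legitimate because $(D\x,\y)=(\x,{}^tD\y)$ on $C^9$), and substitute the definition of $\u\times\v$ and the Lemma~\ref{lemma 4.2} identity; the $\tfrac{2}{3}(\u,\v)$-term contributes $\tfrac{2}{3}(\u,\v)\tr({}^tD)=0$, leaving $-\sum_i(\e_i\wedge\u,{}^tD\e_i\wedge\v)$. Expanding each of these $4\times 4$ Gram determinants along the column carrying the $\e_i$ and ${}^tD\e_i$ entries, summing over $i$ with the aid of $\sum_i(\x,\e_i)\e_i=\x$ and $\tr D=0$, and regrouping via $D\u=D\u_1\wedge\u_2\wedge\u_3+\u_1\wedge D\u_2\wedge\u_3+\u_1\wedge\u_2\wedge D\u_3$, that sum collapses to $-(D\u,\v)$, whence $\tr(D(\u\times\v))=(D\u,\v)$. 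The only places I expect friction are the transpose/inverse bookkeeping in (1) (keeping track of how $*$ and ${}^t(\cdot)$ interact with the $SL(9,C)$-action) and the determinant expansion in (4); parts (2) and (3) are purely formal consequences of the symmetry of the inner products and of $\tau$ being compatible with $\wedge$ and $*$.
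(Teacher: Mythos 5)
Your proposal is correct and follows essentially the same route as the paper: part (1) by pushing $*$ past the $SL(9,C)$-action via the finite analogue of Lemma \ref{lemma 4.1} (which you rightly make explicit, since the lemma as stated covers only the infinitesimal case), parts (2) and (3) as formal consequences of the identity $(*(\v\wedge *(\u\wedge\x)),\y)=-(\x\wedge\u,\y\wedge\v)$ from Lemma \ref{lemma 4.2} and of the compatibility of $\tau$ with $\wedge$ and $*$, and part (4) by reducing to $-\sum_{i}(\e_i\wedge\u,{}^tD\e_i\wedge\v)$ and expanding the Gram determinant as in Lemma \ref{lemma 4.2}. The one point where you streamline the paper's argument is (4): writing $\tr(D(\u\times\v))=\sum_{i}((\u\times\v)\e_i,{}^tD\e_i)$ reaches that sum in a single application of the Lemma \ref{lemma 4.2} identity (the scalar term dying by $\tr({}^tD)=0$), whereas the paper first expands $D(\u\times\v)\x$ by the Leibniz rule and then needs an additional cancellation computation to arrive at the same expression.
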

\begin{proof}	
	(1) As for the left hand side, it follows from the definition of 
	$\u \times \v$ that
\begin{align*}
 	A(\u \times \v)A^{-1}\x&=A (*(\v \wedge *(\u \wedge A^{-1}\x))+ 
 	\dfrac{2}{3}(\u, \v)(A^{-1}\x)),\,\, \x \in C^9
 	\\
 	&=A (*(\v \wedge *(\u \wedge A^{-1}\x))+\dfrac{2}{3}(\u, \v)\x,
\end{align*}
	on the other hand, as for the right hand side, it follows from Lemma \ref{lemma 4.1}  that 
	\begin{align*}
	(A\u \times {}^t A^{-1})\x&=*({}^t A^{-1}\v \wedge *(A\u \wedge 
	\x))+\dfrac{2}{3}(A\u, {}^t A^{-1}\v)\x,\,\, \x \in C^9
	\\
	&=*({}^t A^{-1}\v \wedge *(A(\u \wedge A^{-1}\x)))+\dfrac{2}{3}
	(\u,\v)\x
	\\
	&=*({}^t A^{-1}\v \wedge {}^t A^{-1}(*(\u \wedge A^{-1}\x)))+
	\dfrac{2}{3}(\u,\v)\x
	\\
	&=*({}^t A^{-1}(\v \wedge *(\u \wedge A^{-1}\x)))+\dfrac{2}{3}
	(\u,\v)\x
	\\
	&= A(*(\v \wedge *(\u \wedge A^{-1}\x)))+\dfrac{2}{3}(\u,\v)\x.
\end{align*}
	Hence we have that $A(\u \times \v)A^{-1}\x=(A\u \times {}^t 
	A^{-1})\x, \x \in C^9$, that is, $A(\u \times \v)A^{-1} 
	= A
	\u \times {}^tA^{-1}\v$.
	The relational formula $[D, \u \times \v] = D\u \times \v + \u \times (-{}^tD\v)$ is shown as above.
\vspace{1mm}

	(2) Since $\u \times \v \in \mathfrak{sl}(9,C)$ (Lemma 
	\ref{lemma 4.2}), we have $({}^t (\u \times \v)\x, \y)=(\x, (\u 
	\times \v)\y), \x, \y \in C^9$. Subsequently, we have 
	to 
	show $(\x, (\u \times \v)\y)=((\v \times \u)\x, \y)$.
	Indeed, as for the 
	left 
	hand side, it follows that 
	\begin{align*}
 		(\x, (\u \times \v)\y)
 		&=(\x, *(\v \wedge *(\u \wedge \y))+\dfrac{2}{3}(\u, \v)\y) 
 		\\
		&=(\x, *(\v \wedge *(\u \wedge \y)))+\dfrac{2}{3}(\u, \v)(\x,\y) 
		\\
		&=(*(\v \wedge *(\u \wedge \y)), \x)+\dfrac{2}{3}(\u, \v)(\x,\y) 
		\\
		&=-(\y \wedge \u, \x \wedge \v)+\dfrac{2}{3}(\u, \v)(\x,\y),
	\end{align*}
	on the other hand, as for the right hand side, it follows that 
	\begin{align*}
		((\v \times \u)\x, \y)&=(*(\u \wedge *(\v \wedge \x))+\dfrac{2}{3}(\v, \u)\x, \y)
		\\
		&=(*(\u \wedge *(\v \wedge \x)), \y)+\dfrac{2}{3}(\v, \u)(\x, \y)
		\\
		&=-(\x \wedge \v, \y \wedge \u)+\dfrac{2}{3}(\v, \u)(\x, \y)
		\\
		&=-(\y \wedge \u, \x \wedge \v)+\dfrac{2}{3}(\u, \v)(\x, \y)
	\end{align*}
	Hence we have $({}^t (\u \times \v)\x, \y)=((\v \times \u)\x, 
	\y) $ for every $\x, \y \in C^9$, that is, ${}^t(\u \times 
	\v) = \v 
	\times \u$.
	\vspace{1mm}

	(3) 
	Note first that $ *(\tau \u)=\tau(*\u) $. 
	Indeed, for every $ \v \in \varLambda^3(C^9) $, we have 
	\begin{align*}
		(*(\tau \u ), \v )&=(\tau \u \wedge \v, \e_1 \wedge \cdots 
		\wedge \e_9) 
		\\
		&=(\tau (\u \wedge \tau \v), \tau(\e_1 \wedge \cdots \wedge \e_9))
		\\
		&=\tau(\u \wedge \tau \v, \e_1 \wedge \cdots \wedge \e_9)
		\\
		&=\tau(*\u, \tau \v)=(\tau(*\u), \tau(\tau\v))
		\\
		&=(\tau(*\u), \v).
	\end{align*}
	Then using this relational formula, it follows that
  \begin{align*}
	(\tau \u \times \tau \v)\x &=*(\tau\v \wedge *(\tau\u \wedge 
	\x))+\dfrac{2}{3}(\tau \u, \tau \v)\x, \,\,\x \in C^9
	\\
	&=*(\tau\v \wedge \tau (*(\u \wedge \tau\x)))+\dfrac{2}{3}
	\tau(\u, \v)\x
	\\
	&= \tau(*(\v \wedge *(\u \wedge \tau\x)))+\dfrac{2}{3}\tau(\u, 
	\v)\x
	\\
	&= \tau(*(\v \wedge *(\u \wedge \tau\x))+\dfrac{2}{3}(\u, \v)
	(\tau \x))
	\\
	&= \tau((\u \times \v)(\tau\x))
	\\
	&= \tau(\u \times \v)\x
   \end{align*}
   for every $ \x \in C^9 $.
   Hence we have $\tau(\u \times \v)=\tau \u \times \tau \v$.
	\vspace{1mm}

 (4) First, as in (1) above, we do straightforward computation of 
 $D(\u \times \v)\x, \x \in C^9$. It follows from Lemma 
 \ref{lemma 
 4.1} that
 \begin{align*}
	D(\u \times \v)\x&=D(*(\v \wedge *(\u \wedge \x))+\dfrac{2}{3}
	(\u, \v)\x)
	\\
	&=D(*(\v \wedge *(\u \wedge \x)))+\dfrac{2}{3}(\u, \v)(D \x)
	\\
	&=*(-{}^t D(\v \wedge *(\u \wedge \x)))+\dfrac{2}{3}(\u, \v)(D 
	\x)
	\\
	&=*(-{}^t D\v \wedge *(\u \wedge \x)+\v \wedge *(D\u \wedge \x)+
	\v \wedge *(\u \wedge D\x))+\dfrac{2}{3}(\u, \v)D \x.
 \end{align*}
	Then, as in the proof of Lemma \ref{lemma 4.2}, note that $\tr(D)=0$, we have the following
	\begin{align*}
		\tr\,(D(\u \times \v))
		&=\sum_{j=1}^9 (D(\u \times \v)\e_j, \e_j)
		\\
		&=\sum_{j=1}^9 (*(-{}^t D\v \wedge *(\u \wedge \e_j)+\v \wedge *(D\u \wedge \e_j)+\v \wedge *(\u \wedge D\e_j))
		\\[-3mm]
		&\hspace*{60mm}
		+\dfrac{2}{3}(\u, \v) D \e_j, \e_j)
		\\
		&=\sum_{j=1}^9 (*(-{}^t D\v \wedge *(\u \wedge \e_j)+\v \wedge *(D\u \wedge \e_j)+\v \wedge *(\u \wedge D\e_j)),\e_j)
		\\[-3mm]
		&\hspace*{60mm}+\dfrac{2}{3}(\u, \v)\,\sum_{j=1}^9(D \e_j, \e_j)
		\\
		&=\sum_{j=1}^9 (*(-{}^t D\v \wedge *(\u \wedge \e_j)+\v \wedge *(D\u \wedge \e_j)+\v \wedge *(\u \wedge D\e_j)),\e_j)
		\\[-3mm]
		&\hspace*{60mm}+\dfrac{2}{3}(\u, \v)\tr(D)
		\\
		&=\sum_{j=1}^9 (*(-{}^t D\v \wedge *(\u \wedge \e_j)+\v \wedge *(D\u \wedge \e_j)+\v \wedge *(\u \wedge D\e_j)),\e_j)
		\\
		&=\sum_{j=1}^9\Bigl((*(-{}^t D\v \wedge *(\u \wedge \e_j)),\e_j)+(*(\v \wedge *(D\u \wedge \e_j)),\e_j)
		\\[-3mm]
		&\hspace*{60mm}+(*(\v \wedge *(\u \wedge D\e_j)),\e_j)\Bigr)
		\\
		&=-\sum_{j=1}^9\Bigl((\e_j \wedge \u, \e_j \wedge -{}^t D\v)+(\e_j \wedge D\u, \e_j \wedge \v)+(D\e_j \wedge \u, \e_j \wedge \v) \Bigr).
	\end{align*}
    Here, as for computation above, we do the following computation:
    \begin{align*}
    	(\e_j \wedge \u, \e_j \wedge -{}^t D\v)&=(\e_j \wedge \u, -{}^tD(\e_j \wedge \v)+{}^tD\e_j \wedge \v)
    	\\
    	&=(\e_j \wedge \u, -{}^tD(\e_j \wedge \v))+(\e_j \wedge \u,{}^tD\e_j \wedge \v)
    	\\
    	&=-(D(\e_j \wedge \u), \e_j \wedge \v)+(\e_j \wedge \u,{}^tD\e_j \wedge \v)
    	\\
    	&=-(D\e_j \wedge \u+\e_j \wedge D\u, \e_j \wedge \v)+(\e_j \wedge \u,{}^tD\e_j \wedge \v)
    	\\
    	&=-(D\e_j \wedge \u,\e_j \wedge \v)-(\e_j \wedge D\u,\e_j \wedge \v)+(\e_j \wedge \u,{}^tD\e_j \wedge \v).
    \end{align*}
    Hence, combining both computations above, we have  
    \begin{align*}
    \tr\,(D(\u \times \v))=-\displaystyle{\sum_{j=1}^9}(\e_j \wedge \u,{}^tD\e_j \wedge \v).
    \end{align*}
     Thus, as in the proof of Lemma \ref{lemma 4.2}, we obtain $\tr\,(D(\u \times \v))=(D\u, \v)$.
    \if0
	\noindent First, we start with computation of (3). As in the proof of Lemma \ref{lemma 4.2}, using $\displaystyle{\sum_{j=1}^n} (D\e_j,\v_l)\e_j \allowbreak={}^tD \v_l$ and Lemma \ref{lemma 4.1}, it follows that 
	\begin{align*}
		{\text{(3)}}&= -\sum_{j=1}^9(D\e_j \wedge \u, \e_j \wedge \v)
		\\
		&=-\sum_{j=1}^9 \det 
        \begin{pmatrix}
        (D\e_j,\e_j) & (D\e_j, \v_1) & (D\e_j, \v_2) & (D\e_j, \v_3) \\
        (\u_1, \e_j) & (\u_1, \v_1)  & (\u_1, \v_2)  & (\u_1, \v_3)  \\
        (\u_2, \e_j) & (\u_2, \v_1)  & (\u_2, \v_2)  &  (\u_2, \v_3) \\
        (\u_3, \e_j) & (\u_3, \v_1)  & (\u_3, \v_2)  &  (\u_3, \v_3)
        \end{pmatrix} 
	    \\
		&=-\sum_{j=1}^9 \Bigl((D\e_j, \e_j)(\u,\v)-(D\e_j, \v_1)(\u,\e_j \wedge \v_2 \wedge \v_3)+(D\e_j, \v_2)(\u,\e_j \wedge \v_1 \wedge \v_3)
		\\[-3mm]
		&\hspace*{75mm}-(D\e_j, \v_3)(\u,\e_j \wedge \v_1 \wedge \v_2)\Bigr)
		\\[2mm]
		&=-\bigl(\tr\,(D)(\u,\v)-(\u, {}^t\!D\v_1 \wedge \v_2 \wedge \v_3)
		+(\u, {}^t\!D\v_2 \wedge \v_1 \wedge \v_3)-(\u, {}^t\!D\v_3 \wedge \v_1 \wedge \v_2) \bigr)			
		\\
		&=(\u, {}^t\!D\v_1 \wedge \v_2 \wedge \v_3-{}^t\!D\v_2 \wedge \v_1 \wedge \v_3+{}^t\!D\v_3 \wedge \v_1 \wedge \v_2)
		\\
		&=(\u, {}^t\!D\v_1 \wedge \v_2 \wedge \v_3+\v_1 \wedge {}^t\!D\v_2 \wedge \v_3+ \v_1 \wedge \v_2 \wedge {}^t\!D\v_3)
		\\
		&=(\u,{}^t\!D\v)
		\\
		&=(D\u, \v ).                           
	\end{align*}

	Next, as in (3) above, we do straightforward computation of (1). Again, using Lemma \ref{lemma 4.1}, it follows that 
	\begin{align*}
		(1)&=-\sum_{j=1}^9 (\e_j \wedge \u, \e_j \wedge -{}^t D\v)
		\\
		&=-\sum_{j=1}^9 (\e_j \wedge \u, \e_j \wedge \biggl (\sum_{l=1}^k\v_1 \wedge \cdots \wedge  (-{}^t D \v_l) \wedge \cdots \wedge \v_k \biggr)\,)
		\\
		&=\sum_{j=1}^n (-1)^{n-k-1}\sum_{l=1}^k(\e_j \wedge \u, \e_j \wedge (\v_1 \wedge \cdots \wedge  (-{}^t D \v_l) \wedge \cdots \wedge \v_k))
		\\
		&=\sum_{j=1}^n (-1)^{n-k-1}\biggl( \sum_{l=1}^k (\u, \v_1 \wedge \cdots \wedge (-{}^t D \v_l) \wedge \cdots \wedge \v_k)
		\\
		&\hspace*{15mm}-(\u, (\e_j,-{}^t D\v_1)\e_j \wedge \v_2 \wedge \cdots \wedge \v_k)
		\\
		&\hspace*{20mm}-\sum_{l=2}^k
		(\u, (\e_j, \v_1)\e_j \wedge \v_2 \wedge \cdots \wedge (-{}^t D \v_l) \wedge \cdots \wedge \v_k )
		\\
		&\hspace*{15mm}+(\u, (\e_j,-{}^t D\v_2)\e_j \wedge \v_1 \wedge \v_3 \wedge  \cdots \wedge \v_k)
		\\
&\hspace*{20mm}+\sum_{l\geq1,l\not=2}^k (\u, (\e_j, \v_2)\e_j \wedge \v_1 \wedge \v_3 \wedge \cdots \wedge (-{}^t D \v_l) \wedge \cdots \wedge \v_k ) 
\\
& \hspace*{50mm}\vdots
\\
&\hspace*{15mm}+(-1)^{k+2}\Bigl(\u,(\e_j, -{}^t D\v_k)\e_j \wedge \v_1 \wedge \v_2 \wedge \cdots \wedge \v_{k-1})
\\
&\hspace*{20mm}+\sum_{l=1}^{k-1} (\u, (\e_j, \v_k)\e_j \wedge \v_1 \wedge \v_2 \wedge \cdots \wedge (-{}^t D \v_l) \wedge \cdots \wedge \v_{k-1}) \Bigr)\biggr)
\\
\if0
&=\sum_{j=1}^n (-1)^{n-k-1} \biggl((\u, -{}^t D \v)-(\u, (\dd_j, \v_1)\e_j \wedge \v_2 \wedge \cdots \wedge \v_k)
\\
&-(\u, (\e_j, \v_1)\e_j \wedge (-{}^t D \stackrel{1}{\stackrel{\vee}{\v}}))+
\\
& (\u, (\dd_j, \v_2)\e_j \wedge \v_1 \wedge \v_3 \wedge \cdots \wedge \v_k)+ (\u, (\e_j, \v_2)\e_j \wedge (-{}^t D \stackrel{2}{\stackrel{\vee}{\v}}))+
\\
& \hspace*{50mm}\vdots
\\
& (-1)^{k+2}\Bigl((\u, (\dd_j, \v_k)\e_j \v_1 \wedge \v_2 \wedge \cdots \wedge \v_{k-1}+(\u, (\e_j, \v_k)\e_j \wedge (-{}^t D \stackrel{k}{\stackrel{\vee}{\v}})\Bigr)\biggr)
\\
\fi
&=(-1)^{n-k-1}\biggl((\u, -{}^t D\v)
\\
&\hspace*{15mm}-(\u,-{}^t D \v_1 \wedge \v_2 \wedge \cdots \wedge \v_k )
\\
&\hspace*{20mm}-\sum_{l=2}^k(\u, \v_1 \wedge (\v_2 \wedge \cdots \wedge (-{}^t D \v_l) \cdots  \wedge \v_k) )   
\\
& \hspace*{15mm}+(\u,-{}^t D \v_2 \wedge \v_1 \wedge \v_3 \wedge  \cdots \wedge \v_k )
\\
&\hspace*{20mm}+\sum_{l\geq1, l\not=2}^k(\u, \v_2 \wedge (\v_1 \wedge \v_3 \cdots \wedge (-{}^t D \v_l) \wedge \cdots \wedge\v_k) )   
\\
& \hspace*{50mm}\vdots
\\
&\hspace*{15mm}+(-1)^{k+2}\Bigl((\u, -{}^t D\v_k \wedge \v_1 \wedge \cdots \wedge \v_{k-1})
\\
&\hspace*{20mm}+  \sum_{l=1}^{k-1} (\u, \v_k \wedge \v_1 \wedge \cdots \wedge (-{}^t D \v_l) \wedge \cdots \wedge\v_{k-1}) )\Biggr)
\\
&=(-1)^{n-k-1}\biggl((\u, -{}^t D\v)-\underbrace{(\u, -{}^t D\v)-(\u, -{}^t D\v)-\cdots -(\u, -{}^t D\v)}_{k}\biggr)
\\
&=(-1)^{n-k-1}(k-1)\Bigl(-(\u, -{}^t D\v)\Bigr)
\\
&=(-1)^{n-k-1}(k-1)(D\u, \v).
\end{align*}

Last, as in (1) above, we have that 
\begin{align*}
(2)&= \sum_{j=1}^n (-1)^{n-k-1}(\e_j \wedge D\u, \e_j \wedge \v)
= -(-1)^{n-k-1}(k-1)(D\u, \v).
\end{align*}
Hence, note that (1)+(2)=0, we obtain that 
$$
\tr\,(D(\u \times \v))=(-1)^{n-k}(D\u, \v).
$$
\fi
\end{proof}

Note that Theorems \ref{lemma 4.1}, \ref{lemma 4.2} and \ref{lemma 
4.3} also hold with respect to an exterior $C$-vector space 
$\varLambda^k(C^n)$ and an $n$-dimensional $C$-vector space $C^n$, and accordingly, the Lie algebra $
\mathfrak{sl}(9,C)$ and the group $SL(9,C)$ are 
replaced by the Lie algebra $\mathfrak{sl}(n,C)$ and the 
group $SL(n,C)$ (see \cite{go} for details).
\vspace{1mm}

Now, we construct another $C$-Lie algebra ${\mathfrak{e}_8}
^{C}$ of 
type $E_8$. Hereafter, we use the same notation ${\mathfrak{e}_8}
^{C}$ used in previous section as $C$-Lie algebra 
of type 
$E_8$.

\begin{thm}[{\cite[pp.599-600]{go}}]\label{theorem 4.4}
In the $248$-dimensional $C$-vector space
\begin{align*}
  {\mathfrak{e}_8}^{C} = \mathfrak{sl}(9, C) 
  \oplus {\varLambda}^3(C^9) \oplus {\varLambda}
  ^3(C
  ^9), 
\end{align*}
for $R_1=(D_1, \u_1, \v_1), R_2=(D_2, \u_2, \v_2) \in {\mathfrak{e}
_8}^C$, we define a Lie bracket $[R_1, R_2]$ by
$$
   [(D_1, \u_1, \v_1), (D_2, \u_2, \v_2)] =: (D, \u, \v), 
$$
 where
$$
\left\{
\begin{array}{l}
   D = [D_1, D_2] + \u_1 \times \v_2 - \u_2 \times \v_1,
\vspace{1mm}\\
   \u = D_1\u_2 - D_2\u_1 + *(\v_1 \wedge \v_2),
\vspace{1mm}\\
   \v = - {}^tD_1\v_2 + {}^tD_2\v_1 - *(\u_1 \wedge \u_2),
\end{array} \right. 
$$
then ${\mathfrak{e}_8}^{C}$ becomes a $ C$-Lie 
algebra. 
\end{thm}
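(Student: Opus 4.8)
The plan is to verify the three defining properties of a Lie algebra for the bracket on ${\mathfrak{e}_8}^{C}$ introduced above: $C$-bilinearity, anti-symmetry, and the Jacobi identity. Bilinearity over $C$ is immediate from the formulas. Before anything else one checks that the bracket is well defined, i.e. that $(D,\u,\v)$ really lies in ${\mathfrak{e}_8}^{C}=\mathfrak{sl}(9,C)\oplus{\varLambda}^3(C^9)\oplus{\varLambda}^3(C^9)$; the only point is that $D=[D_1,D_2]+\u_1\times\v_2-\u_2\times\v_1$ must be traceless, which holds since $[D_1,D_2]$ is traceless and $\tr(\u\times\v)=0$ by Lemma \ref{lemma 4.2}. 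Anti-symmetry $[R_1,R_2]=-[R_2,R_1]$ is checked componentwise: the $D$-component is visibly alternating, and for the $\u$- and $\v$-components one uses that elements of ${\varLambda}^3(C^9)$ anti-commute under $\wedge$ (the degree $3$ being odd), so $*(\v_1\wedge\v_2)$ and $*(\u_1\wedge\u_2)$ change sign.

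The substance is the Jacobi identity $[[R_1,R_2],R_3]+[[R_2,R_3],R_1]+[[R_3,R_1],R_2]=0$. I would use the $\bm{Z}_3$-grading ${\mathfrak{e}_8}^{C}=\mathfrak{g}_0\oplus\mathfrak{g}_1\oplus\mathfrak{g}_2$ with $\mathfrak{g}_0=\mathfrak{sl}(9,C)$, $\mathfrak{g}_1={\varLambda}^3(C^9)$ (first summand) and $\mathfrak{g}_2={\varLambda}^3(C^9)$ (second summand): reading off the defining formulas shows $[\mathfrak{g}_i,\mathfrak{g}_j]\subseteq\mathfrak{g}_{i+j}$ (indices modulo $3$), so by tri-linearity it is enough to check the Jacobi identity on homogeneous triples. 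Whenever at least one argument lies in $\mathfrak{g}_0$, the identity is automatic: it follows from the general fact that if the degree-$0$ part of a graded bracket is a Lie algebra (here the Jacobi identity in $\mathfrak{sl}(9,C)$), acting on the other graded pieces as representations (here $\u\mapsto D\u$ on $\mathfrak{g}_1$ and $\v\mapsto-{}^tD\v$ on $\mathfrak{g}_2$), and if the products $\mathfrak{g}_1\times\mathfrak{g}_1\to\mathfrak{g}_2$, $(\u_1,\u_2)\mapsto-*(\u_1\wedge\u_2)$, $\mathfrak{g}_2\times\mathfrak{g}_2\to\mathfrak{g}_1$, $(\v_1,\v_2)\mapsto*(\v_1\wedge\v_2)$, and $\mathfrak{g}_1\times\mathfrak{g}_2\to\mathfrak{g}_0$, $(\u,\v)\mapsto\u\times\v$, are $\mathfrak{sl}(9,C)$-equivariant. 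The first two equivariances come from Lemma \ref{lemma 4.1} in its ${\varLambda}^k$ form (the derivation rule $D(\alpha\wedge\beta)=D\alpha\wedge\beta+\alpha\wedge D\beta$ together with $*(D\gamma)=-{}^tD(*\gamma)$), the third from Lemma \ref{lemma 4.3}(1). This settles the triples of degrees $(0,0,0)$, $(0,0,1)$, $(0,0,2)$, $(0,1,1)$, $(0,2,2)$ and $(0,1,2)$.

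There remain the triples with all degrees nonzero, namely $(1,1,1)$, $(2,2,2)$, $(1,1,2)$ and $(1,2,2)$; the pair $(1,1,1)/(2,2,2)$ and the pair $(1,1,2)/(1,2,2)$ are mirror images under interchanging the roles of $\u$ and $\v$ (and of $D$ and $-{}^tD$), so two computations suffice. For $R_i=(0,\u_i,0)$ a direct computation of the iterated brackets shows that the Jacobi identity is equivalent to
$$
\u_3\times*(\u_1\wedge\u_2)+\u_1\times*(\u_2\wedge\u_3)+\u_2\times*(\u_3\wedge\u_1)=0
$$
in $\mathfrak{sl}(9,C)$. Since the trace form $\tr(DD')$ is nondegenerate on $\mathfrak{sl}(9,C)$, it is enough to pair this with an arbitrary $D$; by Lemma \ref{lemma 4.3}(4) the resulting scalar is $(D\u_3,*(\u_1\wedge\u_2))+(D\u_1,*(\u_2\wedge\u_3))+(D\u_2,*(\u_3\wedge\u_1))$, which by the definition of $*$ and the derivation rule equals $(D(\u_1\wedge\u_2\wedge\u_3),\e_1\wedge\cdots\wedge\e_9)=\tr(D)(\u_1\wedge\u_2\wedge\u_3,\e_1\wedge\cdots\wedge\e_9)=0$ (using the ${\varLambda}^9(C^9)$-analogue of Lemma \ref{lemma 4.1} and $\tr D=0$). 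This dispatches the case $(1,1,1)$, and $(2,2,2)$ is identical with the $\u_i$ replaced by the $\v_i$.

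The main obstacle is the case $(1,1,2)$. Taking $R_1=(0,\u_1,0)$, $R_2=(0,\u_2,0)$, $R_3=(0,0,\v_3)$ and computing the iterated brackets, the Jacobi identity becomes
$$
(\u_2\times\v_3)\u_1-(\u_1\times\v_3)\u_2=*\bigl(*(\u_1\wedge\u_2)\wedge\v_3\bigr)
$$
in ${\varLambda}^3(C^9)$, where $\u_i\times\v_3\in\mathfrak{sl}(9,C)$ acts on ${\varLambda}^3(C^9)$ as a derivation. I expect this identity (and its mirror for $(1,2,2)$) to be the genuinely laborious part, and I would establish it by the same device used in the proofs of Lemmas \ref{lemma 4.2} and \ref{lemma 4.3}(4): reduce by multilinearity to decomposable $\u_1=\x_1\wedge\x_2\wedge\x_3$, $\u_2=\y_1\wedge\y_2\wedge\y_3$, $\v_3=\z_1\wedge\z_2\wedge\z_3$, expand both sides against the orthonormal basis $\e_{i_1}\wedge\e_{i_2}\wedge\e_{i_3}$ ($i_1<i_2<i_3$) of ${\varLambda}^3(C^9)$, and evaluate all inner products by the determinant formula and by the relation $(*(\v\wedge*(\u\wedge\x)),\y)=-(\x\wedge\u,\y\wedge\v)$ proved inside Lemma \ref{lemma 4.2}; matching the two sides is lengthy but purely mechanical. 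Once all homogeneous triples have been verified, tri-linearity gives the Jacobi identity in general, and ${\mathfrak{e}_8}^{C}$ is a $C$-Lie algebra.
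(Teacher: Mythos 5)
Your proposal is correct, and at bottom it rests on exactly the same two identities as the paper's argument, but it organizes the verification in a genuinely different and arguably more transparent way. The paper expands the full triple bracket $[R_i,[R_k,R_l]]$ for arbitrary elements and asserts that the cyclic sum cancels by Lemmas \ref{lemma 4.1}, \ref{lemma 4.3}, \ref{lemma 4.5} together with two auxiliary equivariance formulas; you instead use the $\bm{Z}_3$-grading $\mathfrak{g}_0\oplus\mathfrak{g}_1\oplus\mathfrak{g}_2$ to reduce to homogeneous triples, dispose of every triple containing a degree-zero entry by the standard representation/equivariance formalism (which is exactly what Lemma \ref{lemma 4.1} and Lemma \ref{lemma 4.3}(1) encode), and thereby isolate the two identities of Lemma \ref{lemma 4.5} as the only nontrivial content — your displayed equations for the $(1,1,1)$ and $(1,1,2)$ cases are precisely Lemma \ref{lemma 4.5}(1) and (2). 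What your route buys is a clean accounting of why those two identities suffice, plus a genuinely nicer proof of the first one: pairing the cyclic sum against an arbitrary $D\in\mathfrak{sl}(9,C)$ via $\tr(D(\u\times\v))=(D\u,\v)$ and reducing to $\tr(D)\,(\u_1\wedge\u_2\wedge\u_3,\e_1\wedge\cdots\wedge\e_9)=0$ avoids the cofactor computation entirely, and is an addition to the paper, which states Lemma \ref{lemma 4.5} without proof. What the paper's route buys is brevity on the bookkeeping, at the cost of leaving the reader to sort the cancellations by hand. The one place where both you and the paper defer to a mechanical expansion is identity (2) of Lemma \ref{lemma 4.5}; your sketch of how to carry it out (reduce to decomposables, use $(*(\v\wedge*(\u\wedge\x)),\y)=-(\x\wedge\u,\y\wedge\v)$ and the determinant formula) is the standard and workable method, so there is no gap, only an omitted computation of the same size as the one the paper omits.
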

In order to prove the Jacobi identity, we need the following Lemma. 

\begin{lem}[{\cite[Lemma 2.1]{go}}]\label{lemma 4.5}
For $\u, \v, \w \in {\varLambda}^3(C^9)$, we have the 
following
\vspace{1mm}

{\rm(1)} \,\,$ \u \times *(\v \wedge \w) + \v \times *(\w \wedge \u) + \w \times *(\u \wedge \v) = 0$.
\vspace{1mm}

{\rm (2)} \,\,$(\u \times \w)\v - (\v \times \w)\u + *(*(\u \wedge \v) \wedge \w) = 0$.
\end{lem}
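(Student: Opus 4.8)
The plan is to treat the two parts separately, exploiting that both sides of each identity are manifestly multilinear in $\u,\v,\w$.

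\emph{Part (1).} I would argue inside $\mathfrak{sl}(9,C)$. Since $*(\v\wedge\w)\in\varLambda^3(C^9)$, each summand $\u\times *(\v\wedge\w)$ makes sense and is traceless by Lemma \ref{lemma 4.2}, so the left-hand side $X$ lies in $\mathfrak{sl}(9,C)$; as the trace form $(D,E)\mapsto \tr(DE)$ is non-degenerate on $\mathfrak{sl}(9,C)$, it suffices to show $\tr(DX)=0$ for every $D\in\mathfrak{sl}(9,C)$. Applying Lemma \ref{lemma 4.3}(4) term by term gives
\begin{align*}
\tr(DX)=(D\u,*(\v\wedge\w))+(D\v,*(\w\wedge\u))+(D\w,*(\u\wedge\v)).
\end{align*}
By the symmetry of the inner product and the defining relation of $*$, each term rewrites as, e.g., $(D\u,*(\v\wedge\w))=(*(\v\wedge\w),D\u)=(\v\wedge\w\wedge D\u,\e_1\wedge\cdots\wedge\e_9)=(D\u\wedge\v\wedge\w,\e_1\wedge\cdots\wedge\e_9)$, all reordering signs being $+1$ since only products of a $3$-form with a $6$-form occur. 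Summing the three terms yields $\tr(DX)=(D(\u\wedge\v\wedge\w),\e_1\wedge\cdots\wedge\e_9)$. But $\u\wedge\v\wedge\w\in\varLambda^9(C^9)$ is a scalar multiple of $\e_1\wedge\cdots\wedge\e_9$, and $D$ acts on $\varLambda^9(C^9)$ as multiplication by $\tr D=0$; hence $\tr(DX)=0$, so $X=0$.

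\emph{Part (2).} Since $\varLambda^3(C^9)$ is spanned by decomposable elements I would first reduce to $\u=\u_1\wedge\u_2\wedge\u_3$, $\v=\v_1\wedge\v_2\wedge\v_3$, $\w=\w_1\wedge\w_2\wedge\w_3$; then, to prove that the claimed $3$-form $Y$ vanishes, pair it against an arbitrary decomposable $\z=\z_1\wedge\z_2\wedge\z_3$ (the inner product on $\varLambda^3(C^9)$ being non-degenerate). Using Lemma \ref{lemma 4.1} and Lemma \ref{lemma 4.3}(2) one has $((\u\times\w)\v,\z)=(\v,(\w\times\u)\z)$ and $((\v\times\w)\u,\z)=(\u,(\w\times\v)\z)$, while the definition of $*$ together with $*^2=\mathrm{id}$ gives $(*(*(\u\wedge\v)\wedge\w),\z)=(*(\u\wedge\v)\wedge\w\wedge\z,\e_1\wedge\cdots\wedge\e_9)=(\u\wedge\v,\w\wedge\z)$. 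Expanding $(\w\times\u)\z$ through the derivation action of $\mathfrak{sl}(9,C)$ on $\varLambda^3(C^9)$, with $(\w\times\u)\z_j=*(\u\wedge *(\w\wedge\z_j))+\frac{2}{3}(\w,\u)\z_j$, and invoking the relation $(*(\v'\wedge *(\u'\wedge\x)),\y)=-(\x\wedge\u',\y\wedge\v')$ established in the proof of Lemma \ref{lemma 4.2}, the first two terms become explicit sums of $3\times 3$ and $4\times 4$ determinants in the scalars $(\u_i,\v_j),(\u_i,\w_j),(\u_i,\z_j),\dots$, and the third term is a $6\times 6$ determinant in the same scalars. Expanding each determinant by cofactors along the rows and columns carrying $\z$ — and collecting, as in the computations at the end of the proofs of Lemmas \ref{lemma 4.2} and \ref{lemma 4.3}, the resulting terms into multiples of $(\u,\v)$-type inner products that cancel — one checks $(Y,\z)=0$, hence $Y=0$.

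The conceptual content is all in Part (1); the obstacle in Part (2) is purely the bookkeeping of matching the cofactor expansions of the $4\times 4$ and $6\times 6$ determinants. This can be shortened: from Lemma \ref{lemma 4.3}(4) together with $\tr(XY)=\tr(YX)$ one obtains the symmetry $((\u\times\w)\v,\z)=((\v\times\z)\u,\w)$, which puts all three terms of $(Y,\z)$ on the same footing before expanding.
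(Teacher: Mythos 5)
Your argument for part (1) is correct and takes a genuinely different route from the source: the paper quotes this lemma from \cite{go} without reproducing a proof, and Gomyo's own argument reduces to decomposable $\u,\v,\w$, derives a pointwise formula for $(\u\times\v)\x$ in terms of the factors of $\u$, and finishes with a Cramer/cofactor identity for a $9\times 9$ determinant. You instead note that the left-hand side lies in $\mathfrak{sl}(9,C)$ by Lemma \ref{lemma 4.2}, test it against the non-degenerate trace form via Lemma \ref{lemma 4.3}(4), and collapse the three terms to $(D(\u\wedge\v\wedge\w),\e_1\wedge\cdots\wedge\e_9)$, which vanishes because $D$ acts on $\varLambda^9(C^9)$ by multiplication by $\tr D=0$. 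This is shorter, coordinate-free, and avoids the reduction to decomposables; the only point worth making explicit is that you are using the relation $(*\a,\b)=(\a\wedge\b,\e_1\wedge\cdots\wedge\e_9)$ for $\a\in\varLambda^6(C^9)$ as well as for $\a\in\varLambda^3(C^9)$, which is consistent with $*^2=1$ and with how the paper already manipulates $*$ in other degrees.

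For part (2) your setup is sound and matches the source's strategy: pair $Y$ against an arbitrary $\z$, rewrite the first two terms using Lemmas \ref{lemma 4.1} and \ref{lemma 4.3}(2) (or, more symmetrically, as $\tr((\u\times\w)(\v\times\z))-\tr((\v\times\w)(\u\times\z))$ via Lemma \ref{lemma 4.3}(4) and $\tr(XY)=\tr(YX)$, which is a nice observation), and identify the third term with $(\u\wedge\v,\w\wedge\z)$. But the decisive step --- that the resulting determinant expansions actually cancel --- is asserted, not carried out. That cancellation is precisely where the content of (2) lies and is exactly the computation Gomyo performs, so as written this half is a plan rather than a proof. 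Part (1) is complete and an improvement on the source; part (2) needs the final bookkeeping written out, or a duality argument in the spirit of your part (1), before it can stand on its own.
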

\if0
(1)   Let $\u = \u_1 \wedge \u_2 \wedge \u_3, \v = \u_4 \wedge \u_5 
\wedge \u_6$ and $\w = \u_7 \wedge \u_8 \wedge \u_9$. For $\x, \y 
\in  C^9$, we have that
\begin{align*}
  ((\u \times \v)\x, \y)  &= (*(\v \wedge *(\u \wedge \x)), \y) + \dfrac{2}{3}(\u, \v)(\x, \y)
\vspace{1mm}\\
   &= - (\x \wedge \u, \y \wedge \v) + \dfrac{2}{3}(\u, \v)(\x, \y)
\vspace{1mm}\\
   &= (\x \wedge \u_2 \wedge \u_3, \v)(\u_1, \y) - (\x \wedge \u_1 \wedge \u_3, \v)(\u_2, \y) 
\vspace{1mm}\\
   & \hspace*{30mm} + (\x \wedge \u_1 \wedge \u_2, \v)(\u_3, \y) - \dfrac{1}{3}(\u, \v)(\x, \y),
\end{align*}   
that is, 
\begin{align*}
&\quad (\u \times \v)\x 
\\
&= (\x \wedge \u_2 \wedge \u_3, \v)\u_1 + (\u_1 \wedge \x \wedge \u_3, \v)\u_2+ (\u_1 \wedge \u_2 \wedge \x, \v)\u_3 - \dfrac{1}{3}(\u, \v)\x \cdots {\mbox{(i)}}. 
\end{align*}
Using the relational formula ${\rm (i)}$ above, we have that
\begin{align*}
&\quad (\u \times *(\v \wedge \w) + \v \times *(\w \wedge \u) + \w \times *(\u \wedge \v))\x
\\
&= \dsum_{j=1}^9(\u_1 \wedge \cdots \wedge \u_{j-1} \wedge \x \wedge \u_{j+1} \wedge \cdots \wedge \u_9, \e_1 \wedge \cdots \wedge \e_9)\u_j
\\
&\hspace*{40mm}- (\u_1 \wedge \cdots \wedge \u_9, \e_1 \wedge \cdots \wedge \e_9)\x  \cdots \mbox{(ii)}.
\end{align*}
We denote $\x = \dsum_{i=1}^9x_i\e_i, \u_j = \dsum_{k=1}^9u_{kj}\e_k$ and $U = \big(u_{kj}\big) \in M(9, C)$. 
Hence we 
\vspace{-2mm}have that 
\begin{align*}
(\u_1 \wedge \cdots \wedge \u_{j-1} \wedge \x \wedge \u_{j+1} \wedge \cdots \wedge \u_9, e_1 \wedge \cdots \wedge \e_9) 
&= \dsum_{k=1}^9\varDelta_{kj}x_k,
\\
(\u_1 \wedge \cdots \wedge \u_9, \e_1 \wedge \cdots \wedge \e_9) 
&= \det\, U,
\end{align*}
where $\varDelta_{kj}$ is the $(k,j)$-cofactor of the matrix $U$.

Therefore we have that 
\begin{align*}
   \mbox{(ii)} 
   &= \dsum_{j,k}x_k\varDelta_{kj}\u_j - (\det\,U)\x = \dsum_{i,j,k}x_k\varDelta_{kj}u_{ji}\e_i - (\det U)\x
\\
   &= \dsum_{j,k}x_k(\det U)\delta_{ki}\e_i - (\det U)\x = 0,
\end{align*} 
that is, $\u \times *(\v \wedge \w) + \v \times *(\w \wedge \u) + \w \times *(\u \wedge \v) = 0$.
\vspace{1mm}

(2)  Let $\u = \u_1 \wedge \u_2 \wedge \u_3$ and $\v = \v_1 \wedge \v_2 \wedge \v_3$. Using the relational formula ${\rm(i)}$, for $\a \in {\varLambda}^3(C^9)$, we have that
\begin{align*}
 &\quad ((\u \times \w)\v - (\v \times \w)\u, \a) 
 \\
 &= (((\u \times \w)\v_1) \wedge \v_2 \wedge \v_3, \a) - (((\u \times \w)\v_2) \wedge \v_1 \wedge \v_3, \a)
 \\
 &\hspace*{20mm}+ (((\u \times \w)\v_3) \wedge \v_1 \wedge \v_2, \a) - (((\v \times \w)\u_1) \wedge \u_2 \wedge \u_3, \a)
 \\
 &\hspace*{20mm}+ (((\v \times \w)\u_2) \wedge \u_1 \wedge \u_3, \a) - (((\v \times \w)\u_3) \wedge \u_1 \wedge \u_2, \a)
 \\
 &= - (\u, \w)(\v, \a) + \dsum_{i=1}^3\dsum_{j=1}^3(\u_i \wedge \u_{i+1} \wedge \v_j, \w)(\u_{i+2} \wedge \v_{i+1} \wedge \v_{j+2}, \a)
 \\
 &\hspace*{20mm}+  (\v, \w)(\u, \a) - \dsum_{i=1}^3\dsum_{j=1}^3(\u_i \wedge \v_j \wedge \v_{j+1}, \w)(\u_{i+1} \wedge \u_{i+2} \wedge \v_{j+2}, \a)
 \\
 &= - (\u \wedge \v, \w \wedge \a) = - (*(*(\u \wedge \v) \wedge \w), \a),
\end{align*}
that is, $(\u \times \w)\v - (\v \times \w)\u + *(*(\u \wedge \v) \wedge \w) = 0$.
\fi

We will return the proof of Theorem  \ref{theorem 4.4}.
\begin{proof} 
 We start the proof of theorem. Let $R_i=(D_i,\u_i,\v_i),i=1,2$ 
 and $R_3 \in {\mathfrak{e}_8}^{C}$. First, it is clear 
 that 
 $[R_1 +R_2, R_3]=[R_1, R_3]+[R_2, R_3]$ and $[\lambda R_1, R_2]=
 \lambda [R_1, R_2], \lambda \in C$. Next, for $\u, \v \in 
 {\varLambda}^3(C^9)$, since we confirm $*(\u \times \v)=-
 *(\v \times \u)$, we have $[R_1, R_2]=-[R_2, R_1]$. Finally, we 
 have 
 to prove the Jacob identity. In order to prove this, using Lemmas 
 \ref{lemma 4.1}, \ref{lemma 4.3}, 
 \ref{lemma 4.5} and note that the relational formulas $*(\v_i \wedge *(\u_k \wedge \u_l))=-*(*(\u_k \wedge \u_l) \wedge \v_k), D_i*(\v_k \wedge \v_l)+*(\v_l \wedge (-{}^tD_i \v_k))+*(\v_k \wedge {}^tD_i \v_l)=0$ hold, it follows from 
 \begin{align*}
 &[R_i,[R_k,R_l]]=:(D,\u,\v),
 \\
 &D=[D_i,[D_k,D_l]]+D_i\u_k\times \v_l+\u_1\times (-{}^tD_i\v_l)-D_i\u_l\times \v_k-\u_l\times ({}^tD_i\v_k)
 \\
 &\qquad -\u_i\times {}^tD_k\v_l+\u_i\times {}^tD_l\v_k-\u_i\times (*(\u_k\wedge\u_l))
 \\
 &\qquad -D_k\u_l\times \v_i-(*(\v_k\wedge\v_l))\times\v_i,
 \\
 &\u=D_iD_k\u_l-D_iD_l\u_k+D_i*(\v_k\wedge\v_l)-[D_k,D_l]\u_i-(\u_k\times \v_l)\u_i
 \\
 &\qquad +(\u_l \times\v_k)\u_i+*(\v_i\wedge(-{}^tD_k\v_l))+*(\v_i\wedge{}^tD_l\v_k)-*(\v_i\wedge(\u_k\wedge\u_l)),
 \\
 &\v={}^tD_i{}^tD_k\v_l-{}^tD_i{}^tD_l\v_1+{}^tD_i*(\u_i\wedge\u_l)+{}^t[D_k,D_l]\v_i+{}^t(\u_k\times \v_l)\v_i
 \\
 &\qquad -{}^t(\u_l\times \v_k)\v_i-*(\u_i\wedge D_k\u_l)+*(\u_i\wedge D_l\u_k)-*(\u_i\wedge *(\v_k\wedge \v_l))
\end{align*}
 that the Jacob identity $[R_1,[R_2,R_3]]+[R_2,[R_3,R_1]]+[R_3,
 [R_1,R_2]]=0$.
\end{proof}

Here, we need the following results.



\begin{thm}[{\cite[Theorem 2.2]{go}}]\label{lemma 4.6}
 The $C$-Lie algebra ${\mathfrak{e}_8}^{C} = 
 \mathfrak{sl}
 (9, C) \oplus {\varLambda}^3(C^9) \oplus 
 {\varLambda}^3(C^9)$ is a simple Lie algebra of type 
 $E_8$.
\end{thm}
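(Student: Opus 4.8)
The plan is to prove that $\mathfrak{e}_8^{C}$ is simple by a direct ideal argument based on its $\mathfrak{sl}(9,C)$-module structure, and then to read off the type from its dimension and rank. That $\dim_C \mathfrak{e}_8^{C} = 80 + 84 + 84 = 248$ is immediate, and $248$ is the dimension of no classical simple Lie algebra and of no exceptional one except $E_8$; so once simplicity is established the theorem follows. (One could instead check that the Killing form is non-degenerate and invoke Cartan's criterion, but the ideal argument avoids that computation.)

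First I would record that $\{(D,0,0) \mid D \in \mathfrak{sl}(9,C)\}$ is a subalgebra isomorphic to $\mathfrak{sl}(9,C)$ and that the bracket of Theorem \ref{theorem 4.4} restricts to
\[
 [(D,0,0),(D',\u,\v)] = \bigl([D,D'],\; D\u,\; -{}^tD\v\bigr).
\]
Thus, as an $\mathfrak{sl}(9,C)$-module, $\mathfrak{e}_8^{C} = V_1 \oplus V_2 \oplus V_3$ with $V_1 = \mathfrak{sl}(9,C)$ the adjoint module, $V_2 = {\varLambda}^3(C^9)$ with its natural action, and $V_3 = {\varLambda}^3(C^9)$ with the action $\v \mapsto -{}^tD\v$, which by Lemma \ref{lemma 4.1} together with the inner product is the module dual to $V_2$. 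These three are nontrivial, irreducible and pairwise non-isomorphic (their highest weights are $\omega_1 + \omega_8$, $\omega_3$ and $\omega_6$). By complete reducibility of finite-dimensional $\mathfrak{sl}(9,C)$-modules, every $\mathfrak{sl}(9,C)$-submodule of $\mathfrak{e}_8^{C}$ — in particular every ideal of $\mathfrak{e}_8^{C}$ — is a direct sum of some of $V_1, V_2, V_3$.

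Next I would extract from the bracket formula the "block" relations
\[
 [V_1,V_2] = V_2, \quad [V_1,V_3] = V_3, \quad [V_2,V_2] \subseteq V_3, \quad [V_3,V_3] \subseteq V_2, \quad [V_2,V_3] \subseteq V_1,
\]
and check that the last three maps do not vanish. Indeed $[(0,\u_1,0),(0,\u_2,0)] = (0,0,-*(\u_1 \wedge \u_2))$, which is nonzero for $\u_1 = \e_1 \wedge \e_2 \wedge \e_3$ and $\u_2 = \e_4 \wedge \e_5 \wedge \e_6$ since $*$ is an isomorphism; the same witnesses handle $[V_3,V_3]$; and $[(0,\u,0),(0,0,\v)] = (\u \times \v, 0, 0)$, which is nonzero because $\u \times \v \equiv 0$ would force $(D\u,\v) = 0$ for all $D, \u, \v$ by Lemma \ref{lemma 4.3}(4), contradicting faithfulness of the $\mathfrak{sl}(9,C)$-action on ${\varLambda}^3(C^9)$. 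Since the image of the bracket of two submodules is again a submodule of the (irreducible) target, "nonzero" upgrades to "everything", so $[V_2,V_2] = V_3$, $[V_3,V_3] = V_2$ and $[V_2,V_3] = V_1$. Hence any nonzero ideal $\mathfrak{a}$, being a sum of some of the $V_i$, contains at least one $V_i$ and then all three — for instance $V_2 \subseteq \mathfrak{a}$ gives $V_3 = [V_2,V_2] \subseteq \mathfrak{a}$ and then $V_1 = [V_2,V_3] \subseteq \mathfrak{a}$, the other two starting cases being symmetric. So $\mathfrak{a} = \mathfrak{e}_8^{C}$, and since $\mathfrak{e}_8^{C}$ is visibly non-abelian it is simple.

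Finally, to pin down the type concretely I would take the diagonal subalgebra $\mathfrak{h} \subseteq \mathfrak{sl}(9,C)$: it is an $8$-dimensional abelian subalgebra acting semisimply on all of $\mathfrak{e}_8^{C}$, and every weight of $V_2$ and of $V_3$ with respect to $\mathfrak{h}$ is nonzero, so the centralizer of $\mathfrak{h}$ in $\mathfrak{e}_8^{C}$ is $\mathfrak{h}$ itself. Thus $\mathfrak{h}$ is a Cartan subalgebra and $\mathfrak{e}_8^{C}$ has rank $8$; a simple complex Lie algebra of dimension $248$ is of type $E_8$, which completes the proof. The only genuine labour is the block bracket relations together with the three non-vanishing checks, and among these I expect the identification of $V_3$ as the dual of $V_2$ and the use of Lemma \ref{lemma 4.3}(4) for $[V_2,V_3] = V_1$ to be the most delicate points; the module-theoretic skeleton and the final dimension count are routine.
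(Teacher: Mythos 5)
Your argument is correct, but it takes a genuinely different route from the proof the paper relies on (Gomyo's, which is what the source reproduces): there one fixes a nonzero ideal $\mathfrak{a}$ and runs an explicit three-case analysis according to whether $\mathfrak{a}$ meets $\mathfrak{sl}(9,C)$, meets $\mathfrak{q}={\varLambda}^3(C^9)\oplus{\varLambda}^3(C^9)$, or meets neither, settling the cases by concrete element computations — e.g.\ bracketing with $D=\frac{1}{3}(E_{ii}+E_{jj}+E_{kk})-E_{ll}$ to manufacture each $\e_i\wedge\e_j\wedge\e_k$ inside $\mathfrak{a}$, and, when $\mathfrak{q}\cap\mathfrak{a}\neq\{0\}$, applying an explicit chain of seven $\ad$-operators to produce a nonzero element of $\mathfrak{sl}(9,C)\cap\mathfrak{a}$. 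You replace all of that by the observation that an ideal is in particular an $\mathfrak{sl}(9,C)$-submodule, that the three summands are irreducible and pairwise non-isomorphic (highest weights $\omega_1+\omega_8$, $\omega_3$, $\omega_6$), so that every submodule is a sum of a subset of them, and that the three cross-brackets are nonzero — and your witnesses are adequate: $*$ is an isomorphism, so $*(\e_1\wedge\cdots\wedge\e_6)\neq 0$ handles $[V_2,V_2]$ and $[V_3,V_3]$, while Lemma \ref{lemma 4.3}(4) together with non-degeneracy of the pairing and faithfulness of the action handles $[V_2,V_3]$. What your approach buys is brevity and the elimination of the ad-chain computation; what it costs is the appeal to complete reducibility and to the identification of the irreducible $\mathfrak{sl}(9)$-modules involved, which sits less comfortably with the paper's stated preference for elementary verifications. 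Both proofs finish identically by reading off type $E_8$ from $\dim=248$; your additional rank-$8$ check via the diagonal Cartan subalgebra is sound but, as you note, not needed.
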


\if0
We use the decomposition
$$
 {\mathfrak{e}_8}^{C} = \mathfrak{sl}(9, C) \oplus 
 \mathfrak{q}  , \quad  \mathfrak{q}= {\varLambda}^3(C^9) 
 \oplus {\varLambda}^3(C^9). 
$$
For a subset $I = \{i, j, k \}$ $(i < j < k)$ of $\{1, 2, \cdots, 9 \}$, we put
$$
   \e_I = \e_i \wedge \e_j \wedge \e_k \in {\varLambda}^3(C
   ^9). 
$$
Now, let $\mathfrak{a}$ be a non-zero ideal of $\mathfrak{g} = 
{\mathfrak{e}_8}^{C}$. 
\vspace{1mm}

Case (i) where $\mathfrak{sl}(9, C) \cap \mathfrak{a} = 
\{0\}$ and $\mathfrak{q} \cap \mathfrak{a} = \{0\}$.

Let $p : \mathfrak{g} 
\to \mathfrak{sl}(9, C)$ be the projection. If 
$p(\mathfrak{a}) = 0$, then $\mathfrak{a}$ is contained in 
$\mathfrak{q}$, which contradicts $\mathfrak{q}\cap \mathfrak{a} = \{0\}$. Hence, $p(\mathfrak{a})$ is a non-zero 
ideal of $\mathfrak{sl}(9, C)$, so we have $p(\mathfrak{a}) 
= \mathfrak{sl}(9, C)$. For an element $D = 
\dsum_{i=1}^8H_i \in \mathfrak{sl}(9, C), H_i = E_{ii} - 
8E_{99}$, there exists an element 
$(\u, \v) = \big(\dsum_Iu_I\e_I, \sum_Jv_J\e_J\big) \in \mathfrak{q}$ such that $(D, \u, 
\v) \in \mathfrak{a}$. Since $[(D, 0, 0), (X, \u, \v)] = (0, D\u, -{}^tD\v) \in \mathfrak{q} \cap \mathfrak{a} = \{0\}$, we have
$$
\begin{array}{l}
    0 = D\u = \dsum_Iu_ID\e_I = 3\dsum_{I \not\ni 9}u_I\e_I - 6\dsum_{I \ni 9}u_I\e_I, 
\vspace{1mm}\\
    0 = - {}^tD\v = -3\dsum_{J \not\ni 9}v_J\e_J + 6\dsum_{J \ni 9}v_J\e_J, 
\end{array}$$
that is, $u_I = 0$ and $v_J = 0$. Then, $0 \not= (D, \u, \v) = (D, 
0, 0) \in \mathfrak{sl}(9, C) \cap \mathfrak{a} = \{0\}$. 
This is a contradiction.

Case (ii) where $\mathfrak{sl}(9, C) \cap \mathfrak{a} \not= \{0\}$. 

Since $\mathfrak{sl}(9, C) \cap \mathfrak{a}$ is a non-zero ideal of $\mathfrak{sl}(9, C)$, we have $\mathfrak{sl}(9, C) \subset \mathfrak{a}$. For any $\e_i \wedge \e_j \wedge \e_k \allowbreak \in {\varLambda}^3(C^9)$, put
$$
   D = \dfrac{1}{3}(E_{ii} + E_{jj} + E_{kk}) - E_{ll}, \quad \mbox{$l \not=i, j, k$}. 
$$
Since $(D, 0, 0) \in \mathfrak{sl}(9, C) \subset \mathfrak{a}$, we see that
$$
\begin{array}{l}
   (0, \e_i \wedge \e_j \wedge \e_k, 0) = [(D, 0, 0), (0, \e_i \wedge \e_j \wedge \e_k, 0)] \in \mathfrak{a}, 
\vspace{1mm}\\
   (0, 0, \e_i \wedge \e_j \wedge \e_k) = [(D, 0, 0), (0, 0, - \e_i \wedge \e_j \wedge \e_k)] \in \mathfrak{a}. 
\end{array}
$$
It follows that $\mathfrak{q} \subset \mathfrak{a}$. Hence we have $\mathfrak{a} = \mathfrak{g}$. 

Case (iii) where $\mathfrak{q} \cap \mathfrak{a} \not= \{0\}$. 

Let $R = (0, \u, \v)$ be a non-zero element of $\mathfrak{q} \cap \mathfrak{a}$. In the case $\u \not= 0$, we set $\u = \dsum_Iu_I\e_I$. Without loss of generality, we may assume that $u_{\{123\}} = 1$. Set $S_{ij} = (E_{ii} - E_{jj}, 0, 0) \in \mathfrak{g}$ and $T = (0, 0, \e_1 \wedge \e_2 \wedge \e_4) \in \mathfrak{g}$, we have
$$
\begin{array}{l}
   0 \not= \ad(T)\ad(S_{37})\ad(S_{27})\ad(S_{17})\ad(S_{36})\ad(S_{25})\ad(S_{14})R
\vspace{1mm}\\
  \;\;\, = (-E_{34}, 0, 0) \in \mathfrak{sl}(9, C) \cap \mathfrak{a}. 
\end{array}
$$
Then we can reduce this case to Case (ii). In the case where $\v 
\not= 0$, we can similarly reduce to Case (ii). Thus the simplicity 
of $\mathfrak{g}$ has been proved. Moreover, since the dimension of 
$\mathfrak{g}$ is 248, we see that $\mathfrak{g}$ is a Lie algebra 
of type $E_8$.
\fi


\begin{prop}[{\cite[p.602]{go}}]\label{propositin 4.7}
The Killing form $B_8$ of the $C$-Lie algebra $
{\mathfrak{e}_8}^{C} = \mathfrak{sl}(9, C) \oplus 
{\varLambda}^3(C^9) \oplus {\varLambda}^3(C^9)$ is 
given by
$$
   B_8((D_1, \u_1, \v_1), (D_2, \u_2, \v_2)) = 60(\tr(D_1D_2) + (\u_1, \v_2) + (\u_2, \v_1)). 
$$
\end{prop}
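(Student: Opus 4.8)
The plan is to use the $\mathfrak{sl}(9,C)$-module structure of ${\mathfrak{e}_8}^{C}$. Regarding ${\mathfrak{e}_8}^{C}$ as an $\mathfrak{sl}(9,C)$-module via $D \mapsto \ad(D,0,0)$, the three summands $\mathfrak{sl}(9,C)$, ${\varLambda}^3(C^9)$ (first copy, on which $(D,0,0)$ acts by $\u \mapsto D\u$) and ${\varLambda}^3(C^9)$ (second copy, on which $(D,0,0)$ acts by $\v \mapsto -{}^tD\v$) are pairwise non-isomorphic irreducible $\mathfrak{sl}(9,C)$-modules: the second copy is the contragredient of the first, and since ${\varLambda}^3(C^9)$ is not self-dual ($3 \ne 9-3$) the two copies are not isomorphic, while the adjoint module $\mathfrak{sl}(9,C)$ (dimension $80$) is isomorphic to neither (dimension $84$). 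Since the Killing form $B_8$ is $\ad$-invariant, it is in particular invariant under the subalgebra $\{(D,0,0)\} \cong \mathfrak{sl}(9,C)$, so by Schur's lemma it vanishes on every pair of distinct summands except the pairing of the two copies of ${\varLambda}^3(C^9)$. Hence $B_8$ restricted to $\mathfrak{sl}(9,C)$ is a multiple $\lambda_1 \tr(D_1D_2)$ of the trace form, and on the two copies of ${\varLambda}^3(C^9)$ it is a multiple $\lambda_2\big((\u_1,\v_2) + (\u_2,\v_1)\big)$ of the (essentially unique) invariant pairing; here we use Lemma \ref{lemma 4.1}, which says precisely that $(\cdot,\cdot)$ is invariant with respect to the actions $\u \mapsto D\u$ and $\v \mapsto -{}^tD\v$. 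It thus remains to identify the two constants.

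To compute $\lambda_1$, take a diagonal $D = \diag(d_1, \ldots, d_9) \in \mathfrak{sl}(9,C)$, so $\sum_{i=1}^9 d_i = 0$. Then $\ad(D,0,0)$ is simultaneously diagonalized in the weight basis: its eigenvalues are $d_i - d_j$ $(i \ne j)$ on $\mathfrak{sl}(9,C)$ (with zeros on the Cartan part), $d_i + d_j + d_k$ $(i<j<k)$ on the first ${\varLambda}^3(C^9)$, and $-(d_i+d_j+d_k)$ $(i<j<k)$ on the second. Therefore
\[
B_8((D,0,0),(D,0,0)) = \tr\big(\ad(D,0,0)^2\big) = \sum_{i \ne j}(d_i - d_j)^2 + 2\sum_{i<j<k}(d_i + d_j + d_k)^2 .
\]
Using $\sum_i d_i = 0$ one finds $\sum_{i \ne j}(d_i-d_j)^2 = 18\sum_i d_i^2$ and $\sum_{i<j<k}(d_i+d_j+d_k)^2 = 21\sum_i d_i^2$, so the right-hand side equals $60\sum_i d_i^2 = 60\,\tr(D^2)$; since we already know $B_8|_{\mathfrak{sl}(9,C)} = \lambda_1\tr(\cdot\,\cdot)$, this gives $\lambda_1 = 60$.

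Finally, to see $\lambda_2 = \lambda_1$, apply the invariance identity $B_8([X,Y],Z) = -B_8(Y,[X,Z])$ with $X = (0,\u,0)$, $Y = (0,0,\v)$ and $Z = (D,0,0)$. Using the bracket of Theorem \ref{theorem 4.4} one computes $[X,Y] = (\u \times \v, 0, 0)$ and $[X,Z] = (0, -D\u, 0)$, so the left-hand side is $\lambda_1 \tr\big((\u \times \v)D\big) = \lambda_1 (D\u, \v)$ by Lemma \ref{lemma 4.3}(4), while the right-hand side is $\lambda_2 (D\u, \v)$. Since $(D\u,\v)$ is not identically zero, $\lambda_1 = \lambda_2 = 60$, which is the claimed formula. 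The only genuinely computational point is the elementary symmetric-function identity $\sum_{i<j<k}(d_i+d_j+d_k)^2 = 21\sum_i d_i^2$ subject to $\sum_i d_i = 0$; everything else is bookkeeping with the module decomposition and the explicit bracket, so I anticipate no real obstacle.
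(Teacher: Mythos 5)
Your proof is correct, but it reaches the formula by a genuinely different route than the paper. The paper's argument (following Gomyo) first verifies by direct computation that the candidate form $B(R_1,R_2)=\tr(D_1D_2)+(\u_1,\v_2)+(\u_2,\v_1)$ is invariant under the full adjoint action of ${\mathfrak{e}_8}^{C}$, then invokes the simplicity of ${\mathfrak{e}_8}^{C}$ (Theorem \ref{lemma 4.6}) to conclude $B_8=kB$ for a single scalar $k$, and finally finds $k=60$ by evaluating $B_8(R,R)=\tr(\ad(R)^2)=120$ at the one element $R=(E_{11}-E_{22},0,0)$. You instead use only the restriction of the adjoint action to the subalgebra $\mathfrak{sl}(9,C)$: Schur's lemma applied to the three pairwise non-isomorphic irreducible summands (the two copies of $\varLambda^3(C^9)$ being dual to each other but not self-dual, since $3\ne 6$) forces the block shape $\lambda_1\tr(D_1D_2)+\lambda_2\bigl((\u_1,\v_2)+(\u_2,\v_1)\bigr)$, after which a weight computation on a diagonal element gives $\lambda_1=60$, and one instance of the invariance identity together with Lemma \ref{lemma 4.3}(4) gives $\lambda_2=\lambda_1$. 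Your route avoids both the tedious verification that $B$ is $\ad$-invariant under the full bracket and the appeal to simplicity, at the cost of using the representation theory of $\mathfrak{sl}(9,C)$ (irreducibility and non-self-duality of $\varLambda^3(C^9)$); the normalization steps in the two proofs are essentially the same eigenvalue count for $\ad(D,0,0)$ in the weight basis, yours done for general diagonal $D$ and the paper's for one specific $D$.
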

	
\if0
 We consider a symmetric bilinear form $B$ of ${\mathfrak{e}_8}
 ^{C}$:
$$
    B((D_1, \u_1, \v_1), (D_2, \u_2, \v_2)) = \tr(D_1D_2) + (\u_1, \v_2) + (\u_2, \v_1).
$$
Then we see that $B$ is ${\mathfrak{e}_8}^{C}$-adjoint 
invariant: $B([R, R_1],R_2)+B(R_1, [R, R_2])=0, R, R_1, R_2 
\allowbreak \in {\mathfrak{e}_8}^{C}$. Indeed, using 
Lemmas 
\ref{lemma 4.1}, \ref{lemma 4.3} and the relational formulas 
$-(u_k, 
{}^t\!Dv_l)+(D u_k, v_l)=0, (u_i, *(u \wedge u_j))+(u_j,*(u \wedge 
u_i))=0, u_k, u_k, u_i, u_j, u \in \varLambda^3(C^9)$ and so on, by 
doing straightforward computation we can show that  $B$ is $
{\mathfrak{e}_8}^C$-adjoint invariant.

Since ${\mathfrak{e}_8}^{C}$ is simple, there exists $k \in 
C$ such that $B_8(R_1, R_2) = kB(R_1, R_2)$ for all $R_i 
\in {\mathfrak{e}_8}^C$. In order to determine the value $k$, let 
$R 
= R_1 = R_2 = (E_{11} - E_{22}, 0, 0) \in {\mathfrak{e}_8}
^{C}$. Then we have that
$$
     B_8(R, R) = 120, \quad B(R, R) = 2, 
$$
that is, $k = 60$. Therefore we have the required form above.
\fi

We define a complex-conjugate linear transformation $\tau
\ti{\lambda}$ of ${\mathfrak{e}_8}^{C}$ by
$$
    \tau\ti{\lambda}(D, \u, \v) = (-\tau{}^tD, -\tau\v, -\tau\u),
$$
and using this linear transformation we define an Hermitian inner 
product $\langle R_1, R_2 \rangle$ in ${\mathfrak{e}_8}^{C}
$ by
$$
    \langle R_1, R_2 \rangle = -B_8(R_1, \tau\ti{\lambda}R_2). 
$$
Then the explicit form of $\langle R_1, R_2 \rangle$ is given as follows:
$$
    \langle R_1, R_2 \rangle = 60(\tr(D_1(\tau{}^tD_2)) + (\u_1, \tau\u_2) + (\v_2, \tau\v_1)). 
$$

Consequently, as mentioned in \cite[p.598]{go}, 
the group 
$$
     E_8 = \{ \alpha \in \mbox{Aut}({\mathfrak{e}_8}^{C}) 
     \, | \, \langle \alpha R_1, \alpha R_2 \rangle = \langle R_1, 
     R_2 
     \rangle \} 
$$
is the connected compact simple Lie group of type $E_8$ and also 
simply connected. 

Then we have the following lemma.

\begin{lem}\label{lemma 4.8}
The Lie algebra	$\mathfrak{e}_8$ of the group $E_8$ is given by
\begin{align*}
\mathfrak{e}_8&= \{R \in {\mathfrak{e}_8}^{C} \,|\,  \tau
\ti{\lambda} R= R \}
\\
&= \{ R=(D, \u, -\tau \u) \in {\mathfrak{e}_8}^{C} \,|\, D 
\in \mathfrak{su}(9), \u \in \varLambda^3(C^9) \}.
\end{align*}
\end{lem}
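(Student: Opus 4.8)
The plan is to identify $\mathfrak{e}_8 = \mathrm{Lie}(E_8)$ with a subset of ${\mathfrak{e}_8}^C$ and then pin that subset down. Since ${\mathfrak{e}_8}^C$ is simple (Theorem \ref{lemma 4.6}), every derivation of it is inner and its centre is $\{0\}$, so $R \mapsto \ad R$ is a Lie algebra isomorphism of ${\mathfrak{e}_8}^C$ onto $\mathrm{Der}({\mathfrak{e}_8}^C) = \mathrm{Lie}(\Aut({\mathfrak{e}_8}^C))$. Differentiating the defining relation $\langle \alpha R_1, \alpha R_2 \rangle = \langle R_1, R_2 \rangle$ of $E_8$ along a one-parameter subgroup shows that, under this identification,
\begin{align*}
 \mathfrak{e}_8 = \{\, R \in {\mathfrak{e}_8}^C \mid \langle [R, R_1], R_2 \rangle + \langle R_1, [R, R_2] \rangle = 0 \ \text{for all}\ R_1, R_2 \in {\mathfrak{e}_8}^C \,\}.
\end{align*}
So it remains to prove that this condition on $R$ is equivalent to $\tau\ti{\lambda}R = R$ (the first asserted equality), and then to read off the explicit form of the fixed point set of $\tau\ti{\lambda}$ (the second).

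For the first equality, I would begin by recording that $\tau\ti{\lambda}$ is a conjugate-linear automorphism of the Lie algebra ${\mathfrak{e}_8}^C$. It factors as $\tau\ti{\lambda} = \tau \circ \lambda$, where $\lambda(D, \u, \v) = (-{}^tD, -\v, -\u)$ and $\tau$ is the entrywise complex conjugation $(D, \u, \v) \mapsto (\tau D, \tau\u, \tau\v)$; that each of $\lambda$ and $\tau$ preserves the bracket of Theorem \ref{theorem 4.4} is a direct slot-by-slot verification, using ${}^t(\u \times \v) = \v \times \u$ and $\tau(\u \times \v) = \tau\u \times \tau\v$ from Lemma \ref{lemma 4.3}, together with $*(\tau\u) = \tau(*\u)$. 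Granting this, the $\ad$-invariance and nondegeneracy of the Killing form $B_8$ give, for all $R_1, R_2 \in {\mathfrak{e}_8}^C$,
\begin{align*}
 \langle [R, R_1], R_2 \rangle + \langle R_1, [R, R_2] \rangle = B_8\bigl( R_1,\ [R, \tau\ti{\lambda}R_2] - \tau\ti{\lambda}[R, R_2] \bigr),
\end{align*}
so the defining condition for $R$ holds if and only if $\tau\ti{\lambda}[R, R_2] = [R, \tau\ti{\lambda}R_2]$ for every $R_2$. Since $\tau\ti{\lambda}$ is a bijective automorphism, its left-hand side equals $[\tau\ti{\lambda}R, \tau\ti{\lambda}R_2]$, so the condition becomes $[\,R - \tau\ti{\lambda}R,\ Y\,] = 0$ for all $Y \in {\mathfrak{e}_8}^C$; as the centre is trivial this holds precisely when $\tau\ti{\lambda}R = R$. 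This gives $\mathfrak{e}_8 = \{ R \in {\mathfrak{e}_8}^C \mid \tau\ti{\lambda}R = R \}$.

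For the second equality, write $R = (D, \u, \v)$. Then $\tau\ti{\lambda}R = R$ means $(-\tau\,{}^tD, -\tau\v, -\tau\u) = (D, \u, \v)$. The first component gives ${}^t(\tau D) = -D$, i.e. $D$ is skew-Hermitian, which together with $D \in \mathfrak{sl}(9, C)$ is precisely $D \in \mathfrak{su}(9)$; the second and third components each reduce, after applying $\tau$, to the single relation $\v = -\tau\u$, with $\u \in \varLambda^3(C^9)$ otherwise arbitrary. Hence the fixed point set of $\tau\ti{\lambda}$ is exactly $\{(D, \u, -\tau\u) \mid D \in \mathfrak{su}(9),\ \u \in \varLambda^3(C^9)\}$, as claimed.

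The only step that is not purely formal is the verification that $\tau\ti{\lambda}$ is a Lie algebra automorphism of ${\mathfrak{e}_8}^C$; this is a somewhat lengthy but mechanical comparison of the three components against the bracket in Theorem \ref{theorem 4.4}, and is where I would take the most care. Everything else relies only on standard structure theory of a complex simple Lie algebra --- all derivations inner, trivial centre, and a nondegenerate $\ad$-invariant Killing form.
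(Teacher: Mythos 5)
Your argument is correct and is essentially the computation the paper compresses into ``by doing straightforward computation'': one identifies $\mathrm{Lie}(E_8)$ inside ${\mathfrak{e}_8}^{C}\cong\ad({\mathfrak{e}_8}^{C})$ as the elements whose adjoint action is skew for $\langle\,,\,\rangle=-B_8(\,\cdot\,,\tau\ti{\lambda}\,\cdot\,)$, converts this via $\ad$-invariance and nondegeneracy of $B_8$ and the fact that $\tau\ti{\lambda}$ is a conjugate-linear automorphism into the fixed-point condition $\tau\ti{\lambda}R=R$, and then reads off the explicit form. The sign checks in the slot-by-slot verification that $\lambda(D,\u,\v)=(-{}^tD,-\v,-\u)$ and $\tau$ each preserve the bracket (using ${}^t(\u\times\v)=\v\times\u$, $\tau(\u\times\v)=\tau\u\times\tau\v$ and $*(\tau\u)=\tau(*\u)$) all work out, so the proof is complete as proposed.
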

\begin{proof}
By doing straightforward computation, we can confirm the required result.
\end{proof}

\if0
\begin{center}
\begin{minipage}[c]{110mm}
\begin{framed}
	{{\it Memoir}\,-\,simly connected, compact,  simple (connected) \,}

$\bullet$\, $E_8$ is a \uwave{compact} Lie group as a closed subgroup of the unitary group
$$
U(248)=U({\mathfrak{e}_8}^C)=\{ \alpha \in \Iso_C ({\mathfrak{e}_8}^C)\,|\, \langle \alpha R_1, \alpha R_2 \rangle = \langle R_1, R_2 \rangle\}.
$$

$\bullet$\,The Lie algebra $\mathfrak{e}_8$ of the group $E_8$ is 
\begin{align*}
\mathfrak{e}_8&= \{R \in {\mathfrak{e}_8}^C \,|\,  \tau\ti{\lambda} R= R \}
\\
&= \{ R=(D, \u, -\tau \u) \in {\mathfrak{e}_8}^C \,|\, D \in \mathfrak{su}(9), \u \in \varLambda^3(C^9) \}.
\end{align*}
\quad Then the complexification of  the Lie algebra $\mathfrak{e}_8$ is ${\mathfrak{e}_8}^C$\,\,$\rightarrow$ \,$\mathfrak{e}_8$ is \uwave{simple}
\vspace{1mm}

$\bullet$\, The polar decomposition of the Lie group ${E_8}^C=\Aut({\mathfrak{e}_8}^C )$ is given by
$$
     {E_8}^C \simeq E_8 \times \R^{248} ,
$$
\quad where ${E_8}^C=\Aut({\mathfrak{e}_8}^C )$ is generated by 
$
\{\exp (\Ad R)\,|\, R \in {\mathfrak{e}_8}^C   \}
$. 

\quad Since ${E_8}^C=\Aut({\mathfrak{e}_8}^C )$ is connected, $E_8$ ia also connected. It is known \\
\quad by the general theory of Lie groups,
that if a complex Lie group of \\
\quad type $E_8$ is connected, then it is \uwave{simply connected}.
\end{framed}
\end{minipage}
\end{center}
\vspace{3mm}
\fi

We define a $C$-linear transformation $w_{{}_4}$ of $
{\mathfrak{e}_8}^{C}$ by
\begin{align*}
		w_{{}_4}(D, \u, \v) = (A_4D{A_4}^{-1}, A_4 \u, {}^t {A_4}^{-1}\v), 
\end{align*}
where $A_4=\diag(1,i,\ldots, i) \in SU(9)=\{A \in M(9, C)
\,|\, (\tau {}^t\! A)A=E, \det\,A=1 \}$. Then we see that $w_{{}_4} 
\in E_8$ and $(w_{{}_4})^4 = 1$. Hence $w_{{}_4}$ induces the inner 
automorphism $\tilde{w}_{{}_4}$ of order four on $E_8$: $
\tilde{w}_{{}_4}(\alpha)=w_{{}_4} \alpha {w_{{}_4}}^{-1}, \alpha 
\in 
E_8$.
\vspace{1mm}

Now, we will study the subgroup $(E_8)^{w_{{}_4}}$ of $E_8$:
\begin{align*}
	(E_8)^{w_{{}_4}} = \{\alpha \in E_8 \, | \, w_{{}_4}\alpha = \alpha w_{{}_4} \}. 
\end{align*}

The aim of the rest of this section is to determine the structure of the group $(E_8)^{w_{{}_4}}$. 
Before that, we prove lemma needed later.

\begin{lem}\label{lemma 4.9}
The Lie algebra $(\mathfrak{e}_8)^{w_{{}_4}}$ of the group $(E_8)^{w_{{}_4}}$ is given by
\begin{align*}
	(\mathfrak{e}_8)^{w_{{}_4}}=\{(D,0,0) \in \mathfrak{e}_8 \,|\, D \in \mathfrak{s}(\mathfrak{u}(1) \oplus \mathfrak{u}(8))\}.
\end{align*}
In particular, we have  $\dim\,((\mathfrak{e}_8)^{w_{{}_4}})=(64+1)-1=64$.
\end{lem}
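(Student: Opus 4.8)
The plan is to reduce the statement to computing the fixed point set of the $C$-linear map $w_{{}_4}$ acting on $\mathfrak{e}_8$. Since $\tilde{w}_{{}_4}$ is the conjugation $\alpha \mapsto w_{{}_4}\alpha {w_{{}_4}}^{-1}$ and $w_{{}_4} \in E_8 \subset \Aut({\mathfrak{e}_8}^{C})$ is itself a Lie algebra automorphism, differentiating the defining relation $w_{{}_4}\alpha = \alpha w_{{}_4}$ at the identity gives
\begin{align*}
	(\mathfrak{e}_8)^{w_{{}_4}} = \{ R \in \mathfrak{e}_8 \mid w_{{}_4}R = R \}.
\end{align*}
By Lemma \ref{lemma 4.8} an element $R \in \mathfrak{e}_8$ has the form $R = (D, \u, -\tau\u)$ with $D \in \mathfrak{su}(9)$ and $\u \in \varLambda^3(C^9)$, and the definition of $w_{{}_4}$ turns $w_{{}_4}R = R$ into the three conditions $A_4 D {A_4}^{-1} = D$, $A_4\u = \u$ and ${}^t{A_4}^{-1}(-\tau\u) = -\tau\u$. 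Because $A_4 = \diag(1, i, \ldots, i)$ is symmetric and $\tau A_4 = {A_4}^{-1} = {}^t{A_4}^{-1}$, applying $\tau$ to $A_4\u = \u$ shows that the third condition is equivalent to the second; so only $A_4 D {A_4}^{-1} = D$ and $A_4\u = \u$ survive.

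Next I would solve these two conditions separately. The relation $A_4 D {A_4}^{-1} = D$ says that $D$ commutes with $A_4$; since $A_4$ has eigenvalue $1$ with multiplicity one and $i$ with multiplicity eight, $D$ must be block diagonal with a $1 \times 1$ block and an $8 \times 8$ block, and together with $D \in \mathfrak{su}(9)$ this is precisely the statement $D \in \mathfrak{s}(\mathfrak{u}(1) \oplus \mathfrak{u}(8))$. For the relation $A_4\u = \u$, I would expand $\u$ in the $C$-basis $\{\,\e_{i_1} \wedge \e_{i_2} \wedge \e_{i_3} \mid i_1 < i_2 < i_3\,\}$ of $\varLambda^3(C^9)$ and note that $A_4(\e_{i_1} \wedge \e_{i_2} \wedge \e_{i_3})$ equals $-\,\e_{i_1} \wedge \e_{i_2} \wedge \e_{i_3}$ when $i_1 = 1$ and $-i\,\e_{i_1} \wedge \e_{i_2} \wedge \e_{i_3}$ otherwise; in particular no basis vector is fixed by $A_4$, so $A_4\u = \u$ forces $\u = 0$ (and hence $-\tau\u = 0$ as well). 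This gives $(\mathfrak{e}_8)^{w_{{}_4}} = \{(D, 0, 0) \in \mathfrak{e}_8 \mid D \in \mathfrak{s}(\mathfrak{u}(1) \oplus \mathfrak{u}(8))\}$. For the dimension count, $\dim \mathfrak{u}(1) = 1$ and $\dim \mathfrak{u}(8) = 64$, and the trace condition defining $\mathfrak{s}(\,\cdot\,)$ imposes one real linear constraint, so $\dim (\mathfrak{e}_8)^{w_{{}_4}} = (64 + 1) - 1 = 64$.

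The whole argument is essentially bookkeeping. The only two places asking for a little care are the passage from the group condition $w_{{}_4}\alpha = \alpha w_{{}_4}$ to the linear fixed point equation $w_{{}_4}R = R$ (which uses that $w_{{}_4}$ is an algebra automorphism, so its adjoint action restricted to $\mathfrak{e}_8$ is just $w_{{}_4}$ itself), and the eigenvalue bookkeeping for the action of $A_4$ on $\varLambda^3(C^9)$; the key observation there is that this action has no eigenvalue equal to $1$, which is exactly what annihilates the whole $\varLambda^3(C^9) \oplus \varLambda^3(C^9)$ summand. I do not expect a genuine obstacle.
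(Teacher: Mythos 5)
Your proposal is correct and follows essentially the same route as the paper: reduce to the linear fixed-point condition $w_{{}_4}R=R$ on $R=(D,\u,-\tau\u)$, observe that the condition on the third slot is redundant, solve $A_4DA_4^{-1}=D$ to get $\mathfrak{s}(\mathfrak{u}(1)\oplus\mathfrak{u}(8))$, and kill $\u$ via the eigenvalues $-1$ and $-i$ of $A_4$ on $\varLambda^3(C^9)$. The paper states this chain of equalities as a ``straightforward computation''; your write-up simply supplies the details it omits.
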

\begin{proof}
By doing straightforward computation, we can obtain the explicit form above of the Lie algebra $(\mathfrak{e}_8)^{w_{{}_4}}$. Indeed, it follows that
\begin{align*}
(\mathfrak{e}_8)^{w_{{}_4}}&= \{(D, \u, -\tau \u) \in \mathfrak{e}_8 \,|\, w_4 (D, \u, -\tau \u)=(D, \u, -\tau \u) \}
\\
&= \{(D, \u, -\tau \u) \in \mathfrak{e}_8 \,|\, A_4D=DA_4, A_4\u=\u, {}^t {A_4}^{-1}(-\tau\u)=-\tau\u \}
\\
&= \{(D, \u, -\tau \u) \in \mathfrak{e}_8 \,|\, A_4D=DA_4, A_4\u=\u \}
\\
&= \{(D, 0, 0) \in \mathfrak{e}_8 \,|\, D \in \mathfrak{s}(\mathfrak{u}(1) \oplus \mathfrak{u}(8))\}.
\end{align*}

It is clear that $\dim\,((\mathfrak{e}_8)^{w_{{}_4}})=(64+1)-1=64$.
\if0
\hspace*{5mm}\begin{minipage}[c]{120mm}
\begin{framed}
$\bullet$\,$D \in \mathfrak{s}(\mathfrak{u}(1) \oplus \mathfrak{u}(8))$ is easy.
\\[1mm]
$\bullet$\,As for $\u=0$, we give an example. Set $\u==\u_1 \wedge \u_2 \wedge \u_3, \u_k={}^t (u_{1k}, u_{2k}, \ldots, \u_{9k})$. 
\\[1mm]
\quad  Then the component of $\e_1 \wedge \e_2 \wedge \e_3 $  in  $A\u$ is $\det\, 
\begin{pmatrix} u_{11} & u_{12} & u_{13} \\
                       iu_{21} & iu_{22} & iu_{23} \\
                       iu_{31} & iu_{32} & iu_{33}
\end{pmatrix} \cdots (*).                      
$
\\[1mm]
\quad Hence $(*)=-\det\,\begin{pmatrix} u_{11} & u_{12} & u_{13} \\
                       u_{21} & u_{22} & u_{23} \\
                       u_{31} & u_{32} & u_{33}
\end{pmatrix}$. On the other hand, since the component of $\e_1 \wedge \e_2 \wedge \e_3$ in $\u$ is $\det\,\begin{pmatrix} u_{11} & u_{12} & u_{13} \\
                       u_{21} & u_{22} & u_{23} \\
                       u_{31} & u_{32} & u_{33}
\end{pmatrix}$, from $A\u=\u$ we have that $\det\,\begin{pmatrix} u_{11} & u_{12} & u_{13} \\
                       u_{21} & u_{22} & u_{23} \\
                       u_{31} & u_{32} & u_{33}
\end{pmatrix} =0$. The other cases are same above. Thus all components of $\e_{i_1} \wedge  \e_{i_2} \wedge\e_{i_3}$ in $\u$  are $0$, that is, $\u=0$.
\end{framed}
\end{minipage}
\fi
\end{proof}

Now, we will determine the structure of the group $(E_8)^{w_{{}_4}}$.

\begin{thm}\label{theorem 4.10}
The group $(E_8)^{w_{{}_4}}$ is isomorphic to the group $(U(1) \times SU(8))/\Z_{24},\vspace{0.5mm}\,\Z_{24} \\
= \{({\omega_{{}_{24}}}^{3k},{\omega_{{}_{24}}}^{-3k}E),\, ({\omega_{{}_{24}}}^{3k+1},{\omega_{{}_{24}}}^{-3(k-5)}E), \,({\omega_{{}_{24}}}^{3k+2},{\omega_{{}_{24}}}^{-3(k-2)}E)  \,|\, k=0,\ldots,7 \},\vspace{0.5mm} \omega_{24} \allowbreak= e^{i2\pi/24}${\rm :} $(E_8)^{w_{{}_4}} \cong (U(1) \times SU(8))/\Z_{24}$.
\end{thm}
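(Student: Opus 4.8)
The plan is to construct an explicit surjective homomorphism $\varphi : U(1) \times SU(8) \to (E_8)^{w_{{}_4}}$ and then identify its kernel. First I would produce, for each $(\theta, P) \in U(1) \times SU(8)$, an automorphism $\varphi(\theta, P)$ of ${\mathfrak{e}_8}^C$ by letting it act blockwise on the decomposition ${\mathfrak{e}_8}^C = \mathfrak{sl}(9,C) \oplus \varLambda^3(C^9) \oplus \varLambda^3(C^9)$. The natural candidate is to embed $SU(8)$ as $\diag(1, P) \in SU(9)$ acting in the standard way — $D \mapsto QDQ^{-1}$, $\u \mapsto Q\u$, $\v \mapsto {}^tQ^{-1}\v$ with $Q = \diag(1,P)$ — and to let $U(1)$ act by a suitable power of a scalar on each summand, chosen so that the resulting map lands in $E_8$ (preserves the Hermitian form and the bracket) and commutes with $w_{{}_4}$. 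Using Lemma~\ref{lemma 4.3}(1),(2),(3) and Theorem~\ref{theorem 4.4} one checks that such block maps are Lie algebra automorphisms, hence (after exponentiating / by the description of $E_8$) elements of $E_8$; commutation with $w_{{}_4}$ is immediate because $\diag(1,P)$ commutes with $A_4 = \diag(1,i,\dots,i)$ up to the scalar $A_4$ acts by, and the $U(1)$-part is central. This gives $\varphi$ well-defined into $(E_8)^{w_{{}_4}}$, and it is clearly a homomorphism.

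Next I would prove surjectivity. The cleanest route is the dimension/connectedness argument: by Lemma~\ref{lemma 4.9}, $\dim (\mathfrak{e}_8)^{w_{{}_4}} = 64 = \dim(\mathfrak{u}(1) \oplus \mathfrak{su}(8))$, and $d\varphi$ is injective on Lie algebras (a short direct check), so $\varphi$ has open image; since $U(1)\times SU(8)$ is compact, the image is also closed, hence it is a union of connected components of $(E_8)^{w_{{}_4}}$. To conclude I must show $(E_8)^{w_{{}_4}}$ is connected. The standard tool here (as in \cite{miya2}) is that for a simply connected compact $G$ and an element $w$ of finite order, $G^w$ is connected; since $E_8$ is simply connected and $w_{{}_4}$ has order four, $(E_8)^{w_{{}_4}}$ is connected, so $\varphi$ is onto. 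Alternatively one can show any $\alpha \in (E_8)^{w_{{}_4}}$ preserves each eigenspace of $w_{{}_4}$ on ${\mathfrak{e}_8}^C$, in particular acts on $\mathfrak{sl}(9,C) = $ the $w_{{}_4}$-fixed part of the first summand plus whatever, forcing $\alpha$ to be of the block form above after rescaling.

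Then I would compute $\Ker \varphi$. An element $(\theta, P)$ is in the kernel iff $\diag(1,P)$, twisted by the $U(1)$-scalars, acts trivially on all three summands. Acting trivially on $\mathfrak{sl}(9,C)$ forces $\diag(1,P)$ to be a scalar matrix, i.e. $P = \zeta E$ with $\zeta^8 = 1$ (so that $\det(\diag(1,\zeta E)) = \zeta^8 = 1$), and then $1 = \zeta$ is impossible unless we also use the $U(1)$-twist. Writing the scalar from $U(1)$ as $\omega$ (a root of unity once restricted to the kernel), triviality on $\varLambda^3(C^9)$ gives a relation like $\omega^a \zeta^b = 1$ and on the other $\varLambda^3$ summand $\omega^c \zeta^d = 1$; solving these simultaneous root-of-unity equations, together with $\zeta^8 = 1$, produces exactly a cyclic group of order $24$. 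Matching the bookkeeping of exponents against the three residue classes mod $3$ in the statement ($\omega_{24}^{3k}, \omega_{24}^{3k+1}, \omega_{24}^{3k+2}$ paired with the appropriate powers $\omega_{24}^{-3k}, \omega_{24}^{-3(k-5)}, \omega_{24}^{-3(k-2)}$ times $E$) is just a matter of choosing the normalization of the $U(1)$-action correctly at the start. The homomorphism theorem then yields $(E_8)^{w_{{}_4}} \cong (U(1)\times SU(8))/\Z_{24}$.

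The main obstacle I anticipate is pinning down the precise $U(1)$-action: the weights with which the central circle acts on the two copies of $\varLambda^3(C^9)$ must be chosen so that (i) the map genuinely preserves the bracket of Theorem~\ref{theorem 4.4} — this constrains the weight on the second $\varLambda^3$ to be the negative of that on the first, and constrains both against the weight on $\mathfrak{sl}(9,C)$ (which should be $0$) via the $*(\v_1\wedge\v_2)$ and $*(\u_1\wedge\u_2)$ terms — and (ii) the map commutes with $w_{{}_4}$, and (iii) the resulting kernel comes out as the stated $\Z_{24}$ rather than some other cyclic group. Getting all three to hold forces an essentially unique normalization, and verifying it is the delicate computational heart of the argument; the connectedness of $(E_8)^{w_{{}_4}}$ is the other point requiring care, though it follows from the general simply-connected principle already used in \cite{miya2}.
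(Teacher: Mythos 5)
Your overall strategy (a homomorphism from $U(1)\times SU(8)$, surjectivity by the dimension count of Lemma \ref{lemma 4.9} plus connectedness of the centralizer of $w_{{}_4}$ in the simply connected group $E_8$, then a kernel computation) is the same as the paper's, but the construction of the homomorphism has a genuine gap: there is no nontrivial way to make the circle factor ``act by a suitable power of a scalar on each summand.'' If $b\in U(1)$ acts by the scalar $c^{k}$ on $\mathfrak{sl}(9,C)$ and by $c^{a}$, $c^{b'}$ on the two copies of $\varLambda^3(C^9)$, then preserving $[D_1,D_2]$ forces $k=0$ (as you note), preserving the $\u_1\times\v_2$ component forces $b'=-a$, and preserving the $*(\v_1\wedge\v_2)$ component forces $c^{-2a}=c^{a}$, i.e.\ $c^{3a}=1$ for all $c\in U(1)$, hence $a=0$. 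So the only scalar-on-each-summand automorphisms form a finite group ($\cong\Z_3$), the continuous image of your $U(1)$ factor is trivial, the image of $\varphi$ has dimension at most $63<64$, and surjectivity fails; the kernel bookkeeping you describe at the end has no valid normalization to land on.

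The repair, which is what the paper does, is to realize the circle inside $SU(9)$ rather than as a scalar on the summands: first define $\varphi_{{}_{w_{{}_4}}}(A)(D,\u,\v)=(ADA^{-1},A\u,{}^tA^{-1}\v)$ for $A\in S(U(1)\times U(8))\subset SU(9)$ (this is an automorphism by Lemma \ref{lemma 4.3}(1) and preserves the Hermitian form), with kernel $\{E,\omega E,\omega^2E\}\cong\Z_3$ coming from the scalars $aE_9$ with $a^9=1$, $a^3=1$; then compose with the isogeny $f(b,B)=\diag(b^{-8},bB)$ from $U(1)\times SU(8)$ onto $S(U(1)\times U(8))$. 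The element $b$ then acts on $\mathfrak{sl}(9,C)$ by conjugation by $\diag(b^{-8},b,\dots,b)$ --- which is \emph{not} a single scalar (it has weights $b^{\pm9}$ on the off-diagonal blocks), and likewise acts with two distinct weights $b^3$ and $b^{-6}$ on $\varLambda^3(C^9)$ --- and this is precisely why the circle survives into $(E_8)^{w_{{}_4}}$. The composite kernel is $\{(b,B)\mid \diag(b^{-8},bB)=aE_9,\ a^3=1\}=\{(\omega_{24}^{\,j},\omega_{24}^{\,-9j}E)\mid j=0,\dots,23\}\cong\Z_{24}$, which matches the three residue classes in the statement. Your connectedness argument for $(E_8)^{w_{{}_4}}$ (centralizers of elements in a simply connected compact group are connected) is fine and is exactly what the paper implicitly uses.
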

\begin{proof}
We define a mapping $\varphi_{{}_{w_{{}_4}}} :S(U(1) \times U(8)) \to (E_8)^{w_{{}_4}}$ by
\begin{align*}
	 \varphi_{{}_{w_{{}_4}}}(A)(D, \u, \v) = (ADA^{-1}, A\u, {}^tA^{-1}\v). 
\end{align*}

We will prove that $\varphi_{{}_{w_{{}_4}}}$ is well-defined. For 
$R_1=(D_1, \u_1, \v_1), R_2=(D_2, \u_2, \v_2) \in {\mathfrak{e}_8}
^{C}$,
we first show $\varphi_{{}_{w_{{}_4}}}(A)[R_1, R_2]=[\varphi_{{}_{w_{{}_4}}}(A)(R_1),\varphi_{{}_{w_{{}_4}}}(A)(R_2)]$. It follows from Lemmas \ref{lemma 4.1} (2), \ref{lemma 4.3} (1) that 
\begin{align*}
\varphi_{{}_{w_{{}_4}}}(A)[R_1, R_2]=\varphi_{{}_{w_{{}_4}}}(A)([(D_1, \u_1, \v_1), (D_2, \u_2, \v_2)])
=:(ADA^{-1}, A\u, {}^t A^{-1}\v),
\end{align*}
where
\begin{align*}
ADA^{-1}&=A( [D_1, D_2] + \u_1 \times \v_2 - \u_2 \times \v_1)A^{-1}
\\
&= A[D_1, D_2])A^{-1} + A(\u_1 \times \v_2))A^{-1} - A(\u_2 \times \v_1))A^{-1}
\\
&= A(D_1D_2-D_2D_1)A^{-1}+A\u_1 \times  {}^t A^{-1}\v_2-
A\u_2 \times  {}^t A^{-1}\v_1
\\
&=[AD_1A^{-1}, AD_2A^{-1}]+A\u_1 \times  {}^t A^{-1}\v_2-
A\u_2 \times  {}^t A^{-1}\v_1,
\\[2mm]
A\u &= A(D_1\u_2 - D_2\u_1 + *(\v_1 \wedge \v_2))
\\
&= AD_1\u_2 - AD_2\u_1 +A( *(\v_1 \wedge \v_2))
\\
&= (AD_1)\u_2 - (AD_2)\u_1 + *({}^t A^{-1}(\v_1 \wedge \v_2))
\\
&= (AD_1)\u_2 - (AD_2)\u_1 + *({}^t A^{-1}\v_1 \wedge {}^tA^{-1}\v_2),
\\[2mm]
{}^tA^{-1}\v &= {}^t A^{-1}(- {}^tD_1\v_2 + {}^tD_2\v_1 - *(\u_1 \wedge \u_2))
\\
&= {}^tA^{-1}(- {}^tD_1\v_2) + {}^t A^{-1}{}^tD_2\v_1 - {}^t A^{-1}(*(\u_1 \wedge \u_2))
\\
&=  ({}^tA^{-1}(- {}^tD_1))\v_2 +({}^t A^{-1}{}^tD_2)\v_1 - *(A\u_1 \wedge A\u_2), 
\end{align*}
and 
\begin{align*}
[\varphi_{{}_{w_{{}_4}}}(A)(R_1),\varphi_{{}_{w_{{}_4}}}(A)(R_2)] &= [\varphi_{{}_{w_{{}_4}}}(A)(D_1, \u_1, \v_1), \varphi_{{}_{w_{{}_4}}}(A)(D_2, \u_2, \v_2)]
\\
&= [(AD_1A^{-1}, A\u_1, {}^t A^{-1}\v_1), (AD_2A^{-1}, A\u_2, {}^t A^{-1}\v_2)]
\\
&=:(D', \u', \v'),
\end{align*}
where \vspace{-3mm}
\begin{align*}
D' &= [AD_1A^{-1}, AD_2A^{-1}]+A\u_1 \times  {}^t A^{-1}\v_2-
A\u_2 \times  {}^t A^{-1}\v_1,
\\[2mm]
\u' &= (AD_1A^{-1})(A\u_1)-( AD_2A^{-1})(A\u_2)+*({}^t A^{-1}\v_1 \wedge {}^t A^{-1}\v_2)
\\
&= (AD_1)\u_2-( AD_2)\u_1+*({}^t A^{-1}\v_1 \wedge {}^t A^{-1}\v_2),
\\[2mm]
\v' &= -{}^t (AD_1A^{-1})({}^t A^{-1}\v_2)+{}^t (AD_2A^{-1})({}^t A^{-1}\v_1)-*(A\u_1 \wedge A\u_2)
\\
&= ({}^t A^{-1}(- {}^tD_1))\v_2 +({}^t A^{-1}{}^tD_2)\v_1 - *(A\u_1 \wedge A\u_2).
\end{align*}
Hence we have  $\varphi_{{}_{w_{{}_4}}}(A)([R_1, R_2])=[\varphi_{{}_{w_{{}_4}}}(A)(R_1), \varphi_{{}_{w_{{}_4}}}(A)(R_2)]$.

Next, we will show $\langle \varphi_{{}_{w_{{}_4}}}(A)(R_1), \varphi_{{}_{w_{{}_4}}}(A)(R_2) \rangle = \langle R_1, R_2 \rangle$. It follows that 
\begin{align*}
\langle \varphi_{{}_{w_{{}_4}}}(A)(R_1), \varphi_{{}_{w_{{}_4}}}(A)(R_2) \rangle &=
\langle (AD_1A^{-1}, A\u_1, {}^t A^{-1}\v_1), (AD_2A^{-1}, A\u_2, {}^t\! A^{-1}\v_2) \rangle
\\
&= 60((\tr\,(AD_1A^{-1})\tau{}^t(AD_2A^{-1}))+( A\u_1, \tau (A\u_2) 
\\
&\hspace*{60mm}+({}^t A^{-1}\v_2, \tau ({}^t A^{-1}\v_1))
\\
&= 60((\tr\,(AD_1A^{-1}(\tau{}^t A^{-1})(\tau{}^t D_2)(\tau {}^t A))
+( A\u_1, (\tau A)(\tau\u_2)) 
\\
&\hspace*{60mm}+({}^t A^{-1}\v_2, (\tau {}^t A^{-1})(\tau \v_1))
\\
&=60(\tr\,((AD_1)((\tau{}^t D_2) (\tau{}^t A)))+( (\tau{}^t AA)\u_1, \tau\u_2)
\\
&\hspace*{60mm}+ ((\tau A^{-1}{}^t A^{-1})\v_2, \tau \v_1))
\\
&=60(\tr\,((\tau{}^t D_2) (\tau{}^t A))(AD_1)+(\u_1, \tau\u_2)
+ (\v_2, \tau \v_1))
\\
&=60(\tr\,((\tau{}^tD_2) D_1)+(\u_1, \tau\u_2)
+ (\v_2, \tau \v_1))
\\
&=60(\tr\,(D_1(\tau{}^t D_2))+(\u_1, \tau\u_2)
+ (\v_2, \tau \v_1))
\\
&=  \langle R_1, R_2 \rangle,
\end{align*}
that is, $\langle \varphi_{{}_{w_{{}_4}}}(A)(R_1), \varphi_{{}_{w_{{}_4}}}(A)(R_2) \rangle= \langle R_1, R_2 \rangle$.
Hence we have $\varphi_{{}_{w_{{}_4}}}(A) \in E_8$.
Moreover, since $AA_4=A_4A$ holds, we have $w_{{}_4}\varphi_{{}_{w_{{}_4}}}(A)=\varphi_{{}_{w_{{}_4}}}(A)w_{{}_4}$, that is, $\varphi_{{}_{w_{{}_4}}}(A) \in (E_8)^{w_{{}_4}}$. 
Thus $\varphi_{{}_{w_{{}_4}}}$ is well-defined. Obviously $
\varphi_{{}_{w_{{}_4}}}$ is a homomorphism. Here we easily obtain $
\Ker\,\varphi_{{}_{w_{{}_4}}}=\{E, \omega E, \omega^2E \}$, where $ 
\omega \in C, \omega^3=1, \omega \ne1$. Indeed, it follows that
\begin{align*}
\Ker\,\varphi_{{}_{w_{{}_4}}}&= \{A \in S(U(1) \times U(8))\,|\, \varphi_{{}_{w_{{}_4}}}(A)=1   \}
\\
&= \biggl\{A \in S(U(1) \times U(8))\,\biggm|\,\begin{array}{l}ADA^{-1}=D, A\u=\u,{}^t A^{-1}\v=\v,
\\[1mm]
 {\text{for all}}\,\, D \in \mathfrak{sl}(9, C),\u, \v \in 
 \varLambda^3(C^9) \end{array}  \biggr\}
 \\
 &= \biggl\{A \in S(U(1) \times U(8))\,\biggm|\,\begin{array}{l}AD=DA, A\u=\u,(\tau A)\v=\v,
\\[1mm]
 {\text{for all}}\,\, D \in \mathfrak{sl}(9, C),  \u, \v 
 \in \varLambda^3(C^9) \end{array}  \biggr\}
 \\
 &= \{E, \omega E, \omega^2 E \} \cong \Z_3.
\end{align*}
\if0
\vspace{1mm}

\hspace*{10mm}\begin{minipage}[c]{110mm}
\begin{framed}
Set $D_{kl}=((k,l)=1,k \ne l ,{\text{other components}}=0) \in \mathfrak{sl}(9,C)$.
\\
For $A =\Bigl(a_{ij}\Bigr) \in \Ker\,\varphi_{w_{{}_4}}$, \vspace{1mm} we do straightforward computation of $AD_{kl}=D_{kl}A$. For example, let $D_{19}$. Then we have that $a_{11}=a_{99}, a_{9j}=0, j=2,\ldots, 9$. Similarly, for $D_{28}, D_{37},D_{46},D_{54},D_{64}$ etc., we have the following form of $A$:
$$
   A=\diag(a, a, \ldots, a), a \in C, a^9=1.
$$
Moreover, as for $A\u=\u$, we set $\u=\e_1 \wedge \e_2 \wedge \e_3$($\e_1 \wedge \e_2 \wedge \e_3$: a base of $\u$). Then we have that 
$$
  A(\e_1 \wedge \e_2 \wedge \e_3)=A\e_1 \wedge A\e_2 \wedge A\e_3=\det(\diag(a,a,a))(\e_1 \wedge \e_2 \wedge \e_3).
$$
Hence, from $A(\e_1 \wedge \e_2 \wedge \e_3)=\e_1 \wedge \e_2 \wedge \e_3$, we have that $\det(\diag(a,a,a))=1$, that is, $a^3=1$. Conversely, it is clear.
\end{framed}
\end{minipage}
\vspace{5mm}
\fi

Finally, we will prove that $\varphi_{{}_{w_{{}_4}}}$ is surjective. Since $\Ker\,\varphi_{{}_{w_{{}_4}}}$ is discrete and $(E_8)^{{}_{w_{{}_4}}}$ is connected, and together with $\dim((\mathfrak{e}_8)^{w_{{}_4}})=64=(64+1)-1=\dim(\mathfrak{s}(\mathfrak{u}(1) \oplus \mathfrak{u}(8)))$ (Lemma \ref{lemma 4.9}), we see that $\varphi_{{}_{w_{{}_4}}}$ is surjective.
Thus we have the following  isomorphism 
$$
(E_8)^{w_{{}_4}} \cong S(U(1) \times U(8))/\Z_3.
$$
Further, the mapping $f: U(1) \times SU(8) \to S(U(1) \times U(8))$,
$$
     f(b, B)=\begin{pmatrix} b^{-8} & 0        \\
                                0   & b B
             \end{pmatrix},                         
$$
induces the isomorphism $S(U(1) \times U(8)) \cong (U(1) \times SU(8))/\Z_8, \Z_8=\{({\omega_8}^k, {\omega_8}^{-k} E)\,|\, k\allowbreak=0,1, \ldots, 7  \}$, where $\omega_8=e^{i2\pi/8} \in U(1)$. 
\if0
\vspace{3mm}

\hspace*{10mm}\begin{minipage}[c]{110mm}
\begin{framed}
It is clear that the mapping $f$ is well-defined and a homomorphism, moreover $f$ is surjection. Indeed, let $\forall \begin{pmatrix} q & 0 \\
 0 & Q 
\end{pmatrix} \in S(U(1) \times U(8))$. So,  there exists $q \in 
U(1)$,  set $B=q^{\frac{1}{8}}Q$ for this $q$ . Then we see  $B \in 
SU(8)$. It is easiy to obtain that $\Ker\,f=\{({\omega_8}^k, 
{\omega_8}^k E)\,|\, k=0, 2, \ldots, 7  \}$.
\end{framed}
\end{minipage}
\vspace{3mm}
\fi

Here, we will determine the kernel of the mapping $\varphi_{{}_{w_4,f}}$ as the composition of the mappings $w_4$ and $f$: 
\begin{align*}
	\varphi_{{}_{w_4,f}}:U(1) \times SU(8) \to S(U(1) \times U(8)) \to (E_8)^{w_{{}_4}}.
\end{align*}
Using the result of $\Ker\,f$, by doing straightforward computation we have the following 
\begin{align*}
	\Ker\,\varphi_{{}_{w_4,f}}\!&=\!\{({\omega_{{}_{24}}}^{\!3k},{\omega_{{}_{24}}}^{\!-3k}E), ({\omega_{{}_{24}}}^{\!3k+1},{\omega_{{}_{24}}}^{\!-3(k-5)}E), ({\omega_{{}_{24}}}^{\!3k+2},{\omega_{{}_{24}}}^{\!-3(k-2)}E)|k=0,\ldots,7  \}
	\\
	&\cong \Z_{24}.
\end{align*} 

Therefore, we have the desired isomorphism 
\begin{align*}
(E_8)^{w_{{}_4}} \cong (U(1) \times SU(8))/\Z_{24}.
\end{align*}
\end{proof} 


\section{Case 3. The automorphism $\tilde{\upsilon}_{{}_4}$ of order four and the group $(E_8)^{{}_{\upsilon_{{}_4}}}$}

In this section (also in the next Section 6), again we use the $248$-dimensional vector space 
${\mathfrak{e}_8}^{C}$ used in Case 1 (\cite{miya2}) and 
the connected compact exceptional Lie group of type $E_8$ constructed 
by T. Imai and I. Yokota (\cite{Imai}). 

We define a $C$-linear transformation $\upsilon_{{}_4}$ of 
${\mathfrak{e}_8}^{C}$  by
$$
\upsilon_{{}_4}(\varPhi, P, Q, r, s, t)=(\varPhi, i P, -i Q, r, -s,  -t).
$$
Then we see that $\upsilon_{{}_4} \in E_8$ and $(\upsilon_{{}
_4})^4=1, (\upsilon_{{}_4})^2=\upsilon$, where $\upsilon$ is the $
C$-linear transformation of ${\mathfrak{e}_8}^{C}$ defined in \cite[Definition of Subsection 5.7(p. 174)]{iy0}, and so $\upsilon_{{}_4} $ induces the inner automorphism $\tilde{\upsilon}_{{}_4}$ of order four on $E_8$: $\tilde{\upsilon}_{{}_4}(\alpha)=\upsilon_{{}_4} \alpha {\upsilon_{{}_4}}^{-1}, \alpha \in E_8$.

Now, we will study the subgroup $(E_8)^{{}_{\upsilon_{{}_4}}}$ of $E_8$:
\begin{align*}
(E_8)^{{}_{\upsilon_{{}_4}}} = \{\alpha \in E_8 \, | \, \upsilon_{{}_4}\alpha = \alpha \upsilon_{{}_4} \}. 
\end{align*}

The aim of the rest of this section is to determine the structure of the group $(E_8)^{{}_{\upsilon_{{}_4}}}$. 
Before that, we prove lemma and proposition needed later.

\begin{lem}\label{lemma 5.1}
	The Lie algebra $(\mathfrak{e}_8)^{{}_{\upsilon_{{}_4}}}$ of the group $(E_8)^{{}_{\upsilon_{{}_4}}}$ is given by
	\begin{align*}
	(\mathfrak{e}_8)^{{}_{\upsilon_{{}_4}}}&=\{ R \in  \mathfrak{e}_8 \,|\, \upsilon_{{}_4} R=R \}\\
	&=\{R=(\varPhi, 0,0,r, 0,0)\,|\, \varPhi \in \mathfrak{e}_7, r \in i\R  \}.
	\end{align*}
\end{lem}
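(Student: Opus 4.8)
The plan is to compute the fixed point set of $\upsilon_{{}_4}$ acting on $\mathfrak{e}_8$ directly from its definition, exactly as was done for $w_{{}_4}$ in Lemma \ref{lemma 4.9}. Recall that $\mathfrak{e}_8 \subset {\mathfrak{e}_8}^C$ consists of elements $R=(\varPhi, P, Q, r, s, t)$ satisfying the appropriate reality condition coming from $\tau\ti{\lambda}R = R$ (as recorded in the Imai--Yokota description referred to in \cite{iy0}); in particular $\varPhi \in \mathfrak{e}_7$-type component, $Q$ is determined by $P$ via complex conjugation, and $t$ is determined by $s$, with $r \in i\bm{R}$. First I would simply impose $\upsilon_{{}_4}R = R$, i.e.
\begin{align*}
(\varPhi, iP, -iQ, r, -s, -t) = (\varPhi, P, Q, r, s, t).
\end{align*}
Comparing components gives $iP = P$, hence $P = 0$; then $-iQ = Q$ forces $Q = 0$ (consistent, since $Q$ is tied to $P$); and $-s = s$, $-t = t$ give $s = t = 0$. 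The component $\varPhi$ and $r$ are unconstrained beyond their membership in $\mathfrak{e}_8$, so they range over all of $\mathfrak{e}_7$ and $i\bm{R}$ respectively.

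Thus the second equality in the lemma follows: $(\mathfrak{e}_8)^{{}_{\upsilon_{{}_4}}} = \{(\varPhi,0,0,r,0,0) \mid \varPhi \in \mathfrak{e}_7,\ r \in i\bm{R}\}$. For the first equality I would just note that, by general Lie theory, since $\tilde{\upsilon}_{{}_4}$ is an automorphism of $E_8$ with differential $\upsilon_{{}_4}$ acting on $\mathfrak{e}_8$, the Lie algebra of the fixed points subgroup $(E_8)^{{}_{\upsilon_{{}_4}}}$ is precisely the fixed points subalgebra $\{R \in \mathfrak{e}_8 \mid \upsilon_{{}_4}R = R\}$; this is the standard fact that $\mathrm{Lie}(G^\sigma) = \mathfrak{g}^{d\sigma}$ for a finite-order automorphism $\sigma$.

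The only real content is bookkeeping: one must be careful about how the reality condition defining $\mathfrak{e}_8$ inside ${\mathfrak{e}_8}^C$ interacts with the constraints $P=0$, $s=0$, since a priori $\upsilon_{{}_4}$ could fail to preserve $\mathfrak{e}_8$ — but this was already asserted when we noted $\upsilon_{{}_4} \in E_8$, so no issue arises. I expect the main (very minor) obstacle to be merely citing the correct form of $\mathfrak{e}_8$ in the Imai--Yokota model from \cite{iy0} and confirming that the surviving part $\{(\varPhi,0,0,r,0,0)\}$ indeed lies in $\mathfrak{e}_8$, which amounts to observing that such elements automatically satisfy the reality condition (with $Q = 0 = -\tau P$ and $t = 0 = \tau s$ holding trivially, and $r \in i\bm{R}$). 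Hence the proof is a short direct computation, and I would write it as such with the phrase ``by doing straightforward computation'' mirroring the style of Lemma \ref{lemma 4.9}.
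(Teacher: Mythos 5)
Your proposal is correct and is exactly the ``straightforward computation'' that the paper invokes without writing out: imposing $\upsilon_{{}_4}R=R$ componentwise forces $P=Q=s=t=0$ while leaving $\varPhi\in\mathfrak{e}_7$ and $r\in i\bm{R}$ free, and the identification of $\mathrm{Lie}\bigl((E_8)^{{}_{\upsilon_{{}_4}}}\bigr)$ with the fixed subalgebra is the standard fact you cite. No discrepancy with the paper's (omitted) argument.
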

\begin{proof}
	By doing straightforward computation, we can prove this lemma.
\end{proof}

Let $U(1)=\{ \theta \in C\,|\, (\tau \theta)\theta=1 \}$ be the 
unitary group. Then the ordinary unitary group $U(1)$ is isomorphic to 
the group
$\left\{ 
\begin{pmatrix}
  \theta & 0 \\ 
  0      & \tau\theta
\end{pmatrix} \,\Bigm|\theta \in U(1) \right\}$ as the subgroup of the 
group $SU(2)$. 
Hereafter, we denote an element 
$\begin{pmatrix}
\theta & 0 \\ 
0      & \tau\theta
\end{pmatrix}$ by $A_{{}_\theta}$: $A_{{}_\theta}=\begin{pmatrix}
\theta & 0 \\ 
0      & \tau\theta
\end{pmatrix}$.

\begin{prop}\label{proposition 5.2}
 The group $(E_8)^{{}_{\upsilon_{{}_4}}}$ contains a subgroup 
 $$
  \phi_{{}_\upsilon}(U(1))=\{\phi_{{}_\upsilon} (A_{{}_\theta}) \in 
  E_8 \,|\, A_{{}_\theta} \in U(1)   \}
 $$
 which is isomorphic to the group $ U(1)=\{ \theta \in C\,|\, 
 (\tau\theta)\theta=1\} $, 
 where $\phi_{{}_\upsilon}$ is same one as $\varphi_{{}_3}$ defined 
 in {\rm \cite[Theorem 5.7.4]{iy0}}, and moreover the explicit form of $
 \phi_{{}_
 \upsilon} (A_{{}_\theta}): {\mathfrak{e}_8}^{C} \to 
 {\mathfrak{e}_8}^{C}$ is given as follows{\rm :}
 $$
 \phi_{{}_\upsilon} (A_{{}_\theta})(\varPhi, P,Q,r,s,t)=(\varPhi, 
 \theta P,(\tau \theta)Q,r, \theta^2 s,(\tau \theta)^2 t). 
 $$
	
 In particular, we have $\upsilon_{{}_4}=\phi_{{}_\upsilon}(A_{i}), 
 i \in U(1)$.
\end{prop}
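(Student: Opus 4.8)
The plan is to recognise $\phi_{{}_\upsilon}$ as the restriction of Yokota's homomorphism $\varphi_{{}_3}\colon SU(2)\to E_8$ from \cite[Theorem 5.7.4]{iy0} to the circle subgroup $\{A_{{}_\theta}\mid\theta\in U(1)\}$ of $SU(2)$, and then to read off the three assertions from the action of $\varphi_{{}_3}$ on the grading ${\mathfrak{e}_8}^{C}={\mathfrak{e}_7}^{C}\oplus\mathfrak{P}^{C}\oplus\mathfrak{P}^{C}\oplus C\oplus C\oplus C$ used in \cite{iy0}. First I would invoke the construction of $\varphi_{{}_3}$: it integrates the $\mathfrak{sl}(2,C)$-triple whose nonzero components lie in the $(r,s,t)$-part of ${\mathfrak{e}_8}^{C}$, and on the six graded summands $(\varPhi,P,Q,r,s,t)$ the associated one-parameter torus acts with weights $(0,1,-1,0,2,-2)$. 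Since $A_{{}_\theta}=\diag(\theta,\tau\theta)$ with $\tau\theta=\theta^{-1}$ on $U(1)$, exponentiating these weights gives exactly
\[
 \varphi_{{}_3}(A_{{}_\theta})(\varPhi,P,Q,r,s,t)=(\varPhi,\theta P,(\tau\theta)Q,r,\theta^{2}s,(\tau\theta)^{2}t),
\]
which is the asserted formula, and this I take as the definition of $\phi_{{}_\upsilon}(A_{{}_\theta})$. (If one prefers a self-contained argument, the displayed map can instead be checked by hand to preserve the Lie bracket of ${\mathfrak{e}_8}^{C}$ and the Hermitian inner product $\langle\,,\,\rangle$, in the style of the verification made for $w_{{}_4}$ in Section~4; that already yields $\phi_{{}_\upsilon}(A_{{}_\theta})\in E_8$ without quoting \cite{iy0}.)

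Next I would check the group-theoretic properties. The assignment $\theta\mapsto A_{{}_\theta}$ identifies $U(1)=\{\theta\in C\mid(\tau\theta)\theta=1\}$ with the indicated subgroup of $SU(2)$, and from the explicit formula one reads off $\phi_{{}_\upsilon}(A_{{}_\theta})\phi_{{}_\upsilon}(A_{{}_{\theta'}})=\phi_{{}_\upsilon}(A_{{}_{\theta\theta'}})$, so $\phi_{{}_\upsilon}$ is a homomorphism; and if $\phi_{{}_\upsilon}(A_{{}_\theta})=1$ then $\theta P=P$ for every $P\in\mathfrak{P}^{C}$, forcing $\theta=1$, whence $\phi_{{}_\upsilon}$ is injective and $\phi_{{}_\upsilon}(U(1))$ is a subgroup of $E_8$ isomorphic to $U(1)$. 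To see this subgroup lies in $(E_8)^{{}_{\upsilon_{{}_4}}}$, I would compare both maps on the grading: since $\upsilon_{{}_4}(\varPhi,P,Q,r,s,t)=(\varPhi,iP,-iQ,r,-s,-t)$ and $\phi_{{}_\upsilon}(A_{{}_\theta})$ each act on every graded summand by a scalar, they commute, i.e. $\upsilon_{{}_4}\phi_{{}_\upsilon}(A_{{}_\theta})=\phi_{{}_\upsilon}(A_{{}_\theta})\upsilon_{{}_4}$, so $\phi_{{}_\upsilon}(U(1))\subset(E_8)^{{}_{\upsilon_{{}_4}}}$.

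Finally, specialising $\theta=i$ (so $\tau\theta=-i$, $\theta^{2}=-1$, $(\tau\theta)^{2}=-1$) gives $\phi_{{}_\upsilon}(A_{i})(\varPhi,P,Q,r,s,t)=(\varPhi,iP,-iQ,r,-s,-t)=\upsilon_{{}_4}(\varPhi,P,Q,r,s,t)$, i.e. $\upsilon_{{}_4}=\phi_{{}_\upsilon}(A_{i})$. The one genuine obstacle is the first step: extracting the action of $\varphi_{{}_3}$ on a diagonal element directly from its definition in \cite{iy0} (equivalently, carrying out the bracket- and metric-preservation check for the displayed map); once the weight pattern $(0,1,-1,0,2,-2)$ on the six summands is in hand, the homomorphism property, injectivity, commutation with $\upsilon_{{}_4}$, and the identity $\upsilon_{{}_4}=\phi_{{}_\upsilon}(A_{i})$ all follow immediately.
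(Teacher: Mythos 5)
Your proposal is correct and follows essentially the same route as the paper: both rest on Yokota's Theorem 5.7.4 for the explicit weight action of $\phi_{{}_\upsilon}(A_{{}_\theta})$ on the six summands, from which the homomorphism property, the containment in $(E_8)^{{}_{\upsilon_{{}_4}}}$, and the identity $\upsilon_{{}_4}=\phi_{{}_\upsilon}(A_i)$ all follow. The only (immaterial) difference is that you verify commutation with $\upsilon_{{}_4}$ directly from the scalar action on each graded summand, whereas the paper deduces membership in $(E_8)^{{}_{\upsilon_{{}_4}}}$ by exhibiting $\phi_{{}_\upsilon}(A_{{}_\theta})$ as $\exp(\ad(0,0,0,i\nu,0,0))$ with $(0,0,0,i\nu,0,0)$ lying in the fixed Lie algebra of Lemma \ref{lemma 5.1}.
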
                                                             
\begin{proof}
	For $A_{{}_\theta}=\begin{pmatrix} \theta & 0 \\
	0 & \tau \theta
	\end{pmatrix}={\rm exp}\begin{pmatrix} -i\nu & 0 \\
	0 & i\nu 
	\end{pmatrix} \in U(1) \subset SU(2)$, \vspace{1mm} by Lemma 
	\ref{lemma 5.1} and \cite[Theorem 5.7.4]{iy0} we have $\phi_
	\upsilon(A_{{}_\theta})={\rm exp}({\ad}(0,0,0, i \nu, 0,0)) \in 
	(E_8)^{{}_{\upsilon_{{}_4}}}$, moreover the explicit form of $ 
	\phi_{{}_\upsilon} (A_{{}_\theta}) $ and the result of $ 
	\upsilon_{{}_4}=\phi_{{}_\upsilon}(A_{i}) $ are the direct results 
	from 
	\cite[Theorem 5.7.4]
	{iy0}).
\end{proof}                                                             

Here, let $ E_7 $ be the compact connected exceptional Lie group of type $ E_7 $ (\cite[Definition of Subsection 4.2 (p.108)]{iy0}), then  we confirm that $ \alpha \in E_7$ satisfies 
$ \alpha 1=1 , \alpha 1^-=1^-$ and $ \alpha 1_-=1_- $, 
where $1=(0,0,0,1,0,0),1^-=(0,0,0,0,1,0),1_-=(0,0,0,0,0,1) \in 
{\mathfrak{e}_8}^{C}$ (see \cite[Lemma 5.7.2, Theorem 5.7.3]
{iy0} for details).
\vspace{2mm}

Now, we will determine the structure of the group $(E_8)^{{}_{\upsilon_{{}_4}}}$.

\begin{thm}\label{theorem 5.3}
	The group $(E_8)^{{}_{\upsilon_{{}_4}}}$ is isomorphic to the group $(U(1) \times E_7)/\Z_2, \Z_2=\{(1, 1),\allowbreak (-1, -1)  \}{\rm :}$ $(E_8)^{{}_{\upsilon_{{}_4}}} \cong  (U(1) \times E_7)/\Z_2$.
\end{thm}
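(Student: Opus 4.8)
The plan is to construct an explicit homomorphism from $U(1) \times E_7$ onto $(E_8)^{{}_{\upsilon_{{}_4}}}$ and compute its kernel, following the same pattern used for Theorem \ref{theorem 4.10}. First I would define the map $\varphi_{{}_{\upsilon_{{}_4}}} : U(1) \times E_7 \to (E_8)^{{}_{\upsilon_{{}_4}}}$ by $\varphi_{{}_{\upsilon_{{}_4}}}(\theta, \beta) = \phi_{{}_\upsilon}(A_{{}_\theta})\,\beta$, where $\phi_{{}_\upsilon}(A_{{}_\theta})$ is the $U(1)$-subgroup of Proposition \ref{proposition 5.2} and $E_7$ is viewed inside $E_8$ via its standard embedding (the $\alpha$ fixing $1, 1^-, 1_-$). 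To see this lands in the fixed-point group I would check that every $\beta \in E_7$ commutes with $\upsilon_{{}_4}$: since $\upsilon_{{}_4} = \phi_{{}_\upsilon}(A_i)$ acts as $(\varPhi, P, Q, r, s, t) \mapsto (\varPhi, iP, -iQ, r, -s, -t)$ and $E_7$ preserves the grading of ${\mathfrak{e}_8}^{C}$ by the $U(1)$-weights (equivalently, commutes with $\upsilon = (\upsilon_{{}_4})^2$ and with the $\phi_{{}_\upsilon}$-action by \cite[Theorem 5.7.4]{iy0}), the two commute; and $\phi_{{}_\upsilon}(A_{{}_\theta})$ commutes with $\upsilon_{{}_4}$ because both lie in the abelian group $\phi_{{}_\upsilon}(U(1))$. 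That $\varphi_{{}_{\upsilon_{{}_4}}}$ is a homomorphism follows once one knows $\phi_{{}_\upsilon}(U(1))$ and $E_7$ commute elementwise, which is again contained in \cite[Theorem 5.7.4]{iy0}.

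Next I would compute $\Ker\,\varphi_{{}_{\upsilon_{{}_4}}}$. If $\phi_{{}_\upsilon}(A_{{}_\theta})\beta = 1$ then $\beta = \phi_{{}_\upsilon}(A_{{}_\theta})^{-1} = \phi_{{}_\upsilon}(A_{{}_{\tau\theta}})$ must lie in $E_7 \cap \phi_{{}_\upsilon}(U(1))$; applying this to $1^- = (0,0,0,0,1,0)$ and using $\phi_{{}_\upsilon}(A_{{}_\theta})(1^-) = (0,0,0,0,\theta^2,0)$ together with $\beta 1^- = 1^-$ gives $\theta^2 = 1$, hence $\theta = \pm 1$; both $\theta = 1, \beta = 1$ and $\theta = -1, \beta = \phi_{{}_\upsilon}(A_{{}_{-1}})$ occur, and one checks $\phi_{{}_\upsilon}(A_{{}_{-1}}) \in E_7$ acts as $(\varPhi, P, Q, r, s, t) \mapsto (\varPhi, -P, -Q, r, s, t)$, so $\Ker\,\varphi_{{}_{\upsilon_{{}_4}}} = \{(1, 1), (-1, -1)\} \cong \Z_2$ (identifying $-1 \in E_7$ with this element).

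Finally I would prove surjectivity by the dimension-plus-connectedness argument: $(E_8)^{{}_{\upsilon_{{}_4}}}$ is connected (as the fixed-point set of an inner automorphism of the simply connected compact $E_8$), and by Lemma \ref{lemma 5.1} its Lie algebra is $i\R \oplus \mathfrak{e}_7$, which has the same dimension $1 + 133 = 134$ as $\mathrm{Lie}(U(1) \times E_7)$; since $\Ker\,\varphi_{{}_{\upsilon_{{}_4}}}$ is discrete, $\varphi_{{}_{\upsilon_{{}_4}}}$ is a local isomorphism with open image, hence surjective by connectedness. The homomorphism theorem then yields $(E_8)^{{}_{\upsilon_{{}_4}}} \cong (U(1) \times E_7)/\Z_2$. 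The main obstacle I anticipate is the careful bookkeeping showing that the standard $E_7 \subset E_8$ genuinely commutes with $\upsilon_{{}_4}$ elementwise and that the generator of the kernel is correctly identified with $(-1,-1)$ rather than some other order-two element; both reduce to the explicit action formulas in \cite{iy0}, but must be stated cleanly. The connectedness of $(E_8)^{{}_{\upsilon_{{}_4}}}$ should be invoked as a known fact (fixed points of an inner automorphism of a simply connected compact group), exactly as in the proof of Theorem \ref{theorem 4.10}.
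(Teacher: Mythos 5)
Your proposal is correct, but it reaches surjectivity by a different route than the paper. The paper exploits the fact that $(E_8)^{{}_{\upsilon_{{}_4}}}$ sits inside $(E_8)^{\upsilon}$ (since $(\upsilon_{{}_4})^2=\upsilon$) and that the structure of the latter is already known from \cite[Theorem 5.7.6]{iy0}: any $\alpha\in(E_8)^{{}_{\upsilon_{{}_4}}}$ is written as $\varphi_{{}_\upsilon}(A,\delta)$ with $A\in SU(2)$, $\delta\in E_7$, and the commutation condition with $\upsilon_{{}_4}$ forces $A$ to be diagonal, i.e.\ $A=A_{{}_\theta}$ with $\theta\in U(1)$ (the alternative branch $A=-A$, $\delta=-\delta$ being vacuous). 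The kernel is then read off from $\Ker\,\varphi_{{}_\upsilon}=\{(E,1),(-E,-1)\}$. You instead prove surjectivity by the discrete-kernel-plus-dimension-count-plus-connectedness argument, which is exactly the method the paper uses for Theorem \ref{theorem 4.10} but not here; this requires invoking the connectedness of $(E_8)^{{}_{\upsilon_{{}_4}}}$ as the centralizer of an element of a simply connected compact group, a standard fact the paper is happy to use elsewhere, so the step is legitimate. Your direct kernel computation (evaluating $\phi_{{}_\upsilon}(A_{{}_\theta})^{-1}$ on $1^-$ to force $\theta^2=1$ and identifying $\phi_{{}_\upsilon}(A_{{}_{-1}})$ with $-1\in E_7$) is also sound and arguably more self-contained than citing $\Ker\,\varphi_{{}_\upsilon}$. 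The trade-off: the paper's descent from $(E_8)^{\upsilon}$ avoids any appeal to connectedness of the fixed-point subgroup, at the cost of leaning on the full classification of $(E_8)^{\upsilon}$; your version is independent of that classification for surjectivity but needs the connectedness input. Both are complete proofs.
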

\begin{proof}
	We define a mapping $\varphi_{{}_{\upsilon_{{}_4}}}: U(1) \times E_7 
    \to (E_8)^{{}_{\upsilon_{{}_4}}}$ by
    	\begin{align*}
    		\varphi_{{}_{\upsilon_{{}_4}}}(\theta, \delta)=\phi_{{}_
    		\upsilon}(A_{{}_\theta})\delta.
    	\end{align*}

	We will prove that $ \varphi_{{}_{\upsilon_{{}_4}}} $ is well-
	defined. First, we can confirm that the group $E_7$ is the subgroup 
	of the group $(E_8)^{{}_{\upsilon_{{}_4}}}$. Indeed, let $\delta \in 
	E_7$, then it follows that 
	\begin{align*}
	\upsilon_{{}_4} \delta (\varPhi, P,Q,r,s,t)
	&=\upsilon_{{}_4} (\delta \varPhi \delta^{-1},\delta P,\delta Q,r,s,t),\\
	&= (\delta \varPhi \delta^{-1},i\delta P,-i\delta Q,r,-s,-t)\\
	&=(\delta \varPhi \delta^{-1},\delta (iP),\delta (-iQ),r,-s,-t)\\
	&=\delta \upsilon_{{}_4}(\varPhi, P,Q,r,s,t),\,\, (\varPhi, 
	P,Q,r,s,t) \in {\mathfrak{e}_8}^{C},
	\end{align*}
	that is, $\upsilon_{{}_4} \delta=\delta \upsilon_{{}_4}$. 
    Hence we have $E_7 \subset (E_8)^{{}_{\upsilon_{{}_4}}}$, so 
    together with Proposition 
	\ref{proposition 5.2}, we see that $\varphi_{{}_{\upsilon_{{}
	_4}}}$ is well-defined. Subsequently, since the mapping $\varphi_{{}
	_{\upsilon_{{}_4}}}$ is the restriction mapping of $\varphi_{{}_
	\upsilon}: SU(2) \times E_7 \to (E_8)^\upsilon$ (\cite[Theorem 
	5.7.6]
	{iy0}), $\varphi_{{}_{\upsilon_{{}_4}}}$ is a homomorphism. 
	
	Next, we will prove that $\varphi_{{}_{\upsilon_{{}_4}}}$ is surjective. Let $\alpha \in (E_8)^{{}_{\upsilon_{{}_4}}} \subset (E_8)^\upsilon$. Then there exist $A \in SU(2)$ and $\delta \in E_7$ such that $\alpha=\varphi_{{}_\upsilon}(A, \delta)$. Moreover, from the condition $\upsilon_{{}_4} \alpha {\upsilon_{{}_4}}^{-1}=\alpha $, that is, $\upsilon_{{}_4} \varphi_{{}_\upsilon}(A, \delta) {\upsilon_{{}_4}}^{-1}=\varphi_{{}_\upsilon}(A, \delta)$, as  $A=\begin{pmatrix} 
	a & b \\
	c & d
	\end{pmatrix} \in SU(2)$, this relational formula is expressed as follows:
	$$
	\varphi_{{}_\upsilon}(\begin{pmatrix} a & -b \\
	-c & d
	\end{pmatrix}, \upsilon_{{}_4} \delta {\upsilon_{{}_4}}^{-1})=\varphi_{{}_\upsilon}(\begin{pmatrix} a & b \\
	c & d
	\end{pmatrix}, \delta).
	$$
	Indeed, from Proposition \ref{proposition 5.2}, we easily see  
	$$
	\upsilon_{{}_4} \phi_{{}_\upsilon} (A) {\upsilon_{{}_4}}^{-1}=\phi_{{}_\upsilon}( \begin{pmatrix} i & 0 \\
	0 & -i
	\end{pmatrix}) \phi_{{}_\upsilon}(\begin{pmatrix} a & b \\
	c & d
	\end{pmatrix})\phi_{{}_\upsilon}( \begin{pmatrix}- i & 0 \\
	0 & i
	\end{pmatrix})=\phi_{{}_\upsilon}(\begin{pmatrix} a & -b \\
	-c & d
	\end{pmatrix}), 
	$$ 
	and so it follows that
\begin{align*}
	\upsilon_{{}_4} \varphi_{{}_\upsilon}(A, \delta) {\upsilon_{{}_4}}^{-1}&=\upsilon_{{}_4} (\phi_{{}_\upsilon}(A)\delta) {\upsilon_{{}_4}}^{-1}=(\upsilon_{{}_4} \phi_{{}_\upsilon}(A) {\upsilon_{{}_4}}^{-1})(\upsilon_{{}_4} \delta {\upsilon_{{}_4}}^{-1})\\
	&=\phi_{{}_\upsilon}(\begin{pmatrix} 
              a & -b \\
             -c & d
	\end{pmatrix})(\upsilon_{{}_4} \delta {\upsilon_{{}_4}}^{-1})
	\\
	&=\varphi_{{}_\upsilon}(\begin{pmatrix} 
	          a & -b \\
			 -c & d
	\end{pmatrix}, \upsilon_{{}_4} \delta {\upsilon_{{}_4}}^{-1}).
\end{align*}
	Thus we have the following
	$$
	\left\{\begin{array}{l}
	\begin{pmatrix} 
	         a & -b \\
		    -c & d
	\end{pmatrix} = \begin{pmatrix} 
			a & b \\
			c & d
	\end{pmatrix}
	\vspace{2mm}\\
	\upsilon_{{}_4} \delta {\upsilon_{{}_4}}^{-1} = \delta
	\end{array} \right.         
	\qquad   \text{or}\qquad
	\left\{\begin{array}{l}
	\begin{pmatrix} 
			a & -b \\
			-c & d
	\end{pmatrix} 
	= -\begin{pmatrix} 
			a & b \\
			c & d
	\end{pmatrix}
	\vspace{2mm}\\
	\upsilon_{{}_4} \delta {\upsilon_{{}_4}}^{-1} = -\delta.
	\end{array} \right.   
	$$ 
	In the former case, from the first condition we have $ A=\diag(a,
	\tau a) ,
a \in U(1)$. Since $\delta \in E_7$ leaves the 
condition $\upsilon_{{}_4} \delta {\upsilon_{{}_4}}^{-1} = \delta$ 
as mentioned in the beginning of this proof, it is trivial $\delta 
\in E_7$. Hence there exist $ \theta \in U(1) $ and $ \delta \in E_7 
$ such that $ \alpha=\varphi_{{}_\upsilon}(A_{{}_\theta},\delta)=
\varphi_{{}_{\upsilon_4}}(\theta,\delta) $. 
In the latter case, since we have $
\delta=-\delta$ from the second condition, that is, $\delta=0$, this case is impossible. The proof 
of surjective is completed. 

Finally, from ${\rm Ker}\,\varphi_{{}_\upsilon}\!=\!\{ (E,1), (-E, -1) 
\}$, we easily obtain $\Ker\,\varphi_{{}_{\upsilon_4}}\!\!=\!
\{ (E,1), (-E, -1) \} \allowbreak \cong \Z_2$.

Therefore, we have the desired isomorphism 
\begin{align*}
(E_8)^{{}_{\upsilon_{{}_4}}} \cong  (U(1) \times E_7)/\Z_2
\end{align*}
\end{proof} 

\section{Case 4. The automorphism $\tilde{\mu}_{{}_4}$ of order four and the group $(E_8)^{{}_{\mu_{{}_4}}}$}


We define $ C $-linear transformations of $ \mathfrak{P}^C $ by
\begin{align*}
\lambda(X,Y,\xi,\eta)=(Y,-X,\eta, -\xi), \quad
\gamma(X,Y,\xi,\eta)=(\gamma X,\gamma Y,\xi,\eta),
\end{align*}
where $ \gamma $ on the right hand side is same one as $ \gamma \in G_2 
\subset F_4 \subset E_6 $ (see \cite[Subsections 1.10(p.19), 2.11(p.59), 3.11(p.90)]{iy0} for details) and $ \mathfrak{P}^C $ is the Freudenthal $ C $-vector space (see \cite[Section 2 (p.94)]{iy0}). Then we have $ \lambda \in E_7, \lambda^4=1, \lambda^2=-1 $ (\cite[Lemma 4.3.3]{iy0}) and $ \gamma \in E_7, \gamma^2=1 $.  

Using these transformations, we define  a $C$-linear 
transformation $\mu_{{}_4}$ of $
{\mathfrak{e}_8}^{C}$ by
$$
\mu_{{}_4}(\varPhi, P, Q, r, s, t)=(\lambda\gamma\varPhi \gamma
\lambda^{-1}, -\lambda\gamma P, -\lambda\gamma Q, r, s,  t). 
$$
Then we see $\mu_{{}_4} \in E_8$ and $(\mu_{{}_4})^4=1, (\mu_{{}_4})^2=\upsilon$, where $\upsilon$ is same one in previous section, and so ${\mu_{{}_4}} $ induces the inner automorphism $\tilde{\mu}_{{}_4}$ of order four on $E_8$: $\tilde{\mu}_{{}_4}(\alpha)=\mu_{{}_4} \alpha {\mu_{{}_4}}^{-1}, \alpha \in E_8$.

Now, we will study the subgroup $(E_8)^{{}_{\mu_{{}_4}}}$ of $E_8$:
\begin{align*}
(E_8)^{{}_{\mu_{{}_4}}} = \{\alpha \in E_8 \, | \, \mu_{{}_4}\alpha = \alpha\mu_{{}_4} \}. 
\end{align*}

The aim of the rest of this section is to determine the structure of the group $(E_8)^{{}_{\mu_{{}_4}}}$. 
Before that, we prove lemma and propositions needed later.

\begin{lem}\label{lemma 6.1}
	The Lie algebra $(\mathfrak{e}_8)^{{}_{\mu_{{}_4}}}$ of the group $(E_8)^{{}_{\mu_{{}_4}}}$ is given by
	\begin{align*}
	(\mathfrak{e}_8)^{{}_{\mu_{{}_4}}}&=\{ R \in  \mathfrak{e}_8 \,|\, \mu_{{}_4} R=R \}\\
	&=\{R=(\varPhi, 0,0,r, s,-\tau s)\,|\, \varPhi \in (\mathfrak{e}
	_7)^{\lambda\gamma}, r \in i\R, s \in C \}.
	\end{align*}
\end{lem}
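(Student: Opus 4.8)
The plan is to compute the fixed point set $(\mathfrak{e}_8)^{\mu_{{}_4}} = \{R \in \mathfrak{e}_8 \mid \mu_{{}_4}R = R\}$ directly, by writing a general element $R = (\varPhi, P, Q, r, s, t) \in \mathfrak{e}_8$ in its six components and imposing the equation $\mu_{{}_4}R = R$ componentwise. Recall from the description of $\mathfrak{e}_8$ (the analogue of Lemma \ref{lemma 4.8} for the Imai--Yokota model) that $R \in \mathfrak{e}_8$ forces $\varPhi \in \mathfrak{e}_7$, $Q = -\tau\lambda P$ (or whatever the precise reality condition is in this model), $r \in i\bm{R}$, and $t = -\tau s$; I would first recall this explicit form so that the free parameters are $\varPhi \in \mathfrak{e}_7$, $P \in \mathfrak{P}^C$, $s \in C$, $r \in i\bm{R}$.

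Next I would unwind the definition $\mu_{{}_4}(\varPhi, P, Q, r, s, t) = (\lambda\gamma\varPhi\gamma\lambda^{-1}, -\lambda\gamma P, -\lambda\gamma Q, r, s, t)$ and match each slot. The $r$, $s$, $t$ slots are fixed automatically by $\mu_{{}_4}$, so they impose no condition; the real content is in the first three slots. From the first slot, $\lambda\gamma\varPhi\gamma\lambda^{-1} = \varPhi$, i.e. $\varPhi$ commutes with $\lambda\gamma$ (equivalently $\varPhi \in (\mathfrak{e}_7)^{\lambda\gamma}$, using that conjugation by $\lambda\gamma$ is the automorphism whose fixed subalgebra is $(\mathfrak{e}_7)^{\lambda\gamma}$ and noting $(\lambda\gamma)^{-1}$ acts the same way on the fixed set). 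From the second slot, $-\lambda\gamma P = P$; I would show this forces $P = 0$. The key point is that $-\lambda\gamma$ is (conjugate in $E_7$ to) an element of order four acting on $\mathfrak{P}^C$ without eigenvalue $1$: since $\lambda^2 = -1$ and $\gamma^2 = 1$ with $\lambda, \gamma$ commuting appropriately, $(\lambda\gamma)^2 = \lambda^2\gamma^2 = -1$, hence $(-\lambda\gamma)^2 = (\lambda\gamma)^2 = -1$, so $-\lambda\gamma$ has order four and its square is $-\mathrm{id}$, which has no fixed vectors; therefore $-\lambda\gamma$ has no eigenvalue $1$ and $P = 0$. The third slot gives $-\lambda\gamma Q = Q$, which is automatic once $P = 0$ since $Q$ is determined by $P$ (or: the same eigenvalue argument gives $Q = 0$).

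Assembling these, $(\mathfrak{e}_8)^{\mu_{{}_4}} = \{(\varPhi, 0, 0, r, s, -\tau s) \mid \varPhi \in (\mathfrak{e}_7)^{\lambda\gamma},\ r \in i\bm{R},\ s \in C\}$, which is the claimed formula; I would close by remarking that this is what "straightforward computation" delivers. The only genuinely non-routine step is verifying $P = 0$ (and $Q = 0$): one must be careful that $\lambda$ and $\gamma$ interact correctly on $\mathfrak{P}^C$ — in particular that $\gamma$, coming from $G_2 \subset F_4 \subset E_6$, commutes with $\lambda$ on $\mathfrak{P}^C$ so that $(\lambda\gamma)^2 = \lambda^2 = -1$ — and that no eigenvalue-$1$ vector survives; everything else is bookkeeping on the six coordinates using the already-established form of $\mathfrak{e}_8$ in the Imai--Yokota model. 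I expect the authors simply to assert the computation, and my proof sketch would likewise be brief, highlighting the $(\lambda\gamma)^2 = -1$ identity as the crux.
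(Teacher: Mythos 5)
Your proposal is correct and is exactly the ``straightforward computation'' that the paper asserts without writing out: one checks componentwise that the first slot forces $\varPhi\in(\mathfrak{e}_7)^{\lambda\gamma}$, the $P$ and $Q$ slots vanish because $(-\lambda\gamma)^2=(\lambda\gamma)^2=\lambda^2\gamma^2=-1$ admits no eigenvalue $1$, and the remaining slots are constrained only by the reality condition of $\mathfrak{e}_8$. Your identification of $(\lambda\gamma)^2=-1$ (using that $\lambda$ and $\gamma$ commute on $\mathfrak{P}^C$) as the crux is the right one, and the argument is sound.
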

\begin{proof}
	By doing straightforward computation, we can prove this lemma.
\end{proof}

\begin{prop}\label{proposition 6.2}
 The group $(E_8)^{{}_{\mu_{{}_4}}}$ contains a subgroup
 $$
 \phi_{{}_\upsilon}(SU(2)) = \{ \phi_{{}_\upsilon}(A) \in E_8 \, | 
 \, A \in SU(2) \} 
 $$
 which is isomorphic to the group $SU(2) = \{ A \in M(2, C) 
 \, | \, (\tau\,{}^t\!A)A = E, \det A = 1 \}$, where $\phi_{{}_
 \upsilon}$ is defined in Proposition {\rm \ref{proposition 5.2}}.
	
\end{prop}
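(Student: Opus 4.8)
The plan has two parts, and the first is quick. The ``isomorphic to $SU(2)$'' clause is essentially already in the literature: $\phi_{{}_\upsilon}(SU(2))$ is the image of the first factor under the homomorphism $\varphi_{{}_\upsilon}\colon SU(2)\times E_7\to (E_8)^\upsilon$, $\varphi_{{}_\upsilon}(A,\delta)=\phi_{{}_\upsilon}(A)\delta$, of \cite[Theorem 5.7.6]{iy0}; since $\Ker\,\varphi_{{}_\upsilon}=\{(E,1),(-E,-1)\}$ and $(-E,-1)\notin SU(2)\times\{1\}$, the restriction $\phi_{{}_\upsilon}|_{SU(2)}$ is injective, so $\phi_{{}_\upsilon}(SU(2))\cong SU(2)$. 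Hence the real content of the proposition is the inclusion $\phi_{{}_\upsilon}(SU(2))\subset (E_8)^{{}_{\mu_{{}_4}}}$, that is, that $\mu_{{}_4}$ commutes with $\phi_{{}_\upsilon}(A)$ for every $A\in SU(2)$.

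The key step I would take is to factor $\mu_{{}_4}$ into pieces that are visibly controlled. From $\upsilon=(\upsilon_{{}_4})^2$ and the definition of $\upsilon_{{}_4}$, a one-line computation gives $\upsilon(\varPhi,P,Q,r,s,t)=(\varPhi,-P,-Q,r,s,t)$. On the other hand $\lambda,\gamma\in E_7$ give $\lambda\gamma\in E_7$, and, regarded as an element of $E_8$ through the inclusion $E_7\subset E_8$ (under which, exactly as in the proof of Theorem \ref{theorem 5.3}, $\delta\in E_7$ acts by $(\varPhi,P,Q,r,s,t)\mapsto(\delta\varPhi\delta^{-1},\delta P,\delta Q,r,s,t)$), it sends $(\varPhi,P,Q,r,s,t)$ to $((\lambda\gamma)\varPhi(\lambda\gamma)^{-1},(\lambda\gamma)P,(\lambda\gamma)Q,r,s,t)$. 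Since $\gamma^2=1$ we have $(\lambda\gamma)^{-1}=\gamma^{-1}\lambda^{-1}=\gamma\lambda^{-1}$, so this is $(\lambda\gamma\varPhi\gamma\lambda^{-1},\lambda\gamma P,\lambda\gamma Q,r,s,t)$; composing afterwards with $\upsilon$ reproduces exactly the defining formula of $\mu_{{}_4}$. Thus $\mu_{{}_4}=\upsilon\circ(\lambda\gamma)$ in $E_8$, and since $\lambda\gamma\in E_7\subset (E_8)^\upsilon$ commutes with $\upsilon$ we may also write $\mu_{{}_4}=(\lambda\gamma)\circ\upsilon$.

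The conclusion is then immediate: $\phi_{{}_\upsilon}(A)$ commutes with $\upsilon$ because $\phi_{{}_\upsilon}(SU(2))\subset(E_8)^\upsilon$ (\cite[Theorem 5.7.6]{iy0}), and it commutes with $\lambda\gamma\in E_7$ because the images of the two factors of the direct product $SU(2)\times E_7$ under $\varphi_{{}_\upsilon}$ commute elementwise (concretely, $\varphi_{{}_\upsilon}(A,1)=\phi_{{}_\upsilon}(A)$ and $\varphi_{{}_\upsilon}(E,\delta)=\delta$ commute for all $A,\delta$). Hence $\phi_{{}_\upsilon}(A)$ commutes with the product $\mu_{{}_4}=(\lambda\gamma)\circ\upsilon$, that is, $\phi_{{}_\upsilon}(A)\in (E_8)^{{}_{\mu_{{}_4}}}$, which proves the proposition.

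I expect the only mildly delicate point to be the identity $\mu_{{}_4}=\upsilon\circ(\lambda\gamma)$: one must match the $\varPhi$-conjugation correctly (this is where $\gamma^{-1}=\gamma$ enters, making $\gamma\lambda^{-1}$ agree with $(\lambda\gamma)^{-1}$) and observe that $\upsilon$ supplies precisely the sign $-1$ on the $P$- and $Q$-slots. If one prefers to avoid it, the commutation $\mu_{{}_4}\phi_{{}_\upsilon}(A)=\phi_{{}_\upsilon}(A)\mu_{{}_4}$ can instead be checked by a direct componentwise computation on $(\varPhi,P,Q,r,s,t)$, using that $\phi_{{}_\upsilon}(A)$ fixes $\varPhi$, acts on $(P,Q)$ through a fixed $2\times2$ matrix over $C$, and acts on $(r,s,t)$ through a separate block (Proposition \ref{proposition 5.2} and \cite[Theorem 5.7.6]{iy0}), while $\mu_{{}_4}$ fixes $(r,s,t)$ and acts on $(P,Q)$ diagonally by $-\lambda\gamma$; the $C$-linearity of $\lambda\gamma$ then makes the two composites agree.
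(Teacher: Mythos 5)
Your proposal is correct, but it proves the inclusion $\phi_{{}_\upsilon}(SU(2))\subset(E_8)^{{}_{\mu_{{}_4}}}$ by a genuinely different mechanism than the paper. The paper argues infinitesimally: writing $A=\exp\begin{pmatrix}-i\nu & -\tau\rho\\ \rho & i\nu\end{pmatrix}$ and $\phi_{{}_\upsilon}(A)=\exp(\ad(0,0,0,i\nu,\rho,-\tau\rho))$, it invokes Lemma \ref{lemma 6.1} to see that the exponent $(0,0,0,i\nu,\rho,-\tau\rho)$ lies in $(\mathfrak{e}_8)^{{}_{\mu_{{}_4}}}$, whence conjugation by $\mu_{{}_4}$ fixes $\exp(\ad R)$; the whole weight of the proof is carried by the Lie algebra computation in that lemma. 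You instead argue globally: the factorization $\mu_{{}_4}=\upsilon\circ(\lambda\gamma)$ (which checks out --- $\upsilon=(\upsilon_{{}_4})^2$ is $-1$ on the $P,Q$ slots and the identity elsewhere, and $(\lambda\gamma)^{-1}=\gamma\lambda^{-1}$ since $\gamma^2=1$) reduces the claim to commutation of $\phi_{{}_\upsilon}(A)$ with $\upsilon$ (from $\phi_{{}_\upsilon}(SU(2))\subset(E_8)^\upsilon$) and with $\lambda\gamma\in E_7$ (from the elementwise commutation of the images of the two factors under the direct-product homomorphism $\varphi_{{}_\upsilon}$, which is exactly what makes $(A,\delta)\mapsto\phi_{{}_\upsilon}(A)\delta$ multiplicative). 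Your route needs no Lie algebra input and no surjectivity of $\exp$ on $SU(2)$, and it makes transparent the structural reason the inclusion holds; the paper's route is shorter on the page because it outsources the work to Lemma \ref{lemma 6.1} and Yokota's Theorem 5.7.4, and it is the same template used for Proposition \ref{proposition 5.2}. You also supply the injectivity of $\phi_{{}_\upsilon}|_{SU(2)}$ from $\Ker\,\varphi_{{}_\upsilon}=\{(E,1),(-E,-1)\}$, a point the paper leaves implicit.
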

\begin{proof}
	As in the proof of Proposition \ref{proposition 5.2}, 
	for \vspace{1mm}$A = \begin{pmatrix}
	a & -\tau b \\
	b & \tau a
	\end{pmatrix}=
	{\rm exp}\begin{pmatrix}
	- i\nu & - \tau\rho \\
	\rho & i\nu
	\end{pmatrix} \in SU(2)$, by Lemma \ref{lemma 6.1} and \cite[Theorem 
	5.7.4]{iy1} we have $\phi_{{}_\upsilon}(A) = {\rm 
	\exp}({\rm ad}(0, 0, 0, i\nu, $ $\rho, -\tau\rho)) \in (E_8)^{{}
	_{\mu_{{}_4}}}$.
\end{proof}

Here, again let $ E_7 $ be the compact connected exceptional Lie group of type $ E_7 $, we consider the following subgroup $ (E_7)^{\lambda\gamma} $ of $ E_7 $:
\begin{align*}
 (E_7)^{\lambda\gamma}=\{\alpha \in E_7 \,|\,(\lambda\gamma)\alpha(\gamma\lambda^{-1})=\alpha  \}.
\end{align*}

Then we have the following proposition.

\begin{prop}\label{proposition 6.3}
	The group $(E_7)^{\lambda\gamma}$ is the subgroup of the group $(E_8)^{{}_{\mu_{{}_4}}}${\rm:}$(E_7)^{\lambda\gamma} \subset (E_8)^{{}_{\mu_{{}_4}}}$.
\end{prop}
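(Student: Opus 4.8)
The plan is to verify directly that every $\alpha \in (E_7)^{\lambda\gamma}$ commutes with $\mu_{{}_4}$ on ${\mathfrak{e}_8}^C$, using the description of the action of $E_7 \subset E_8$ on the six-component model that was already employed in Section~5. First I would record two elementary facts. Since $\gamma^2 = 1$, we have $(\lambda\gamma)^{-1} = \gamma\lambda^{-1}$, so the defining relation $(\lambda\gamma)\alpha(\gamma\lambda^{-1}) = \alpha$ of $(E_7)^{\lambda\gamma}$ says precisely that $\alpha$ commutes with $\lambda\gamma$ in $E_7$; consequently $\alpha^{-1}$ commutes with $\lambda\gamma$ as well, i.e.\ $\alpha^{-1}\gamma\lambda^{-1} = \gamma\lambda^{-1}\alpha^{-1}$. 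Second, for $\delta \in E_7 \subset E_8$ the action on ${\mathfrak{e}_8}^C$ is $\delta(\varPhi, P, Q, r, s, t) = (\delta\varPhi\delta^{-1}, \delta P, \delta Q, r, s, t)$, the formula already invoked in the proof of Theorem~\ref{theorem 5.3}; in particular $\lambda\gamma$, regarded as an element of $E_7 \subset E_8$, acts by $(\varPhi, P, Q, r, s, t) \mapsto (\lambda\gamma\varPhi\gamma\lambda^{-1}, \lambda\gamma P, \lambda\gamma Q, r, s, t)$.

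Then I would simply compare $\mu_{{}_4}\alpha$ and $\alpha\mu_{{}_4}$ on an arbitrary $(\varPhi, P, Q, r, s, t) \in {\mathfrak{e}_8}^C$. One gets
\[
\mu_{{}_4}\alpha(\varPhi, P, Q, r, s, t) = \bigl(\lambda\gamma(\alpha\varPhi\alpha^{-1})\gamma\lambda^{-1},\, -\lambda\gamma\alpha P,\, -\lambda\gamma\alpha Q,\, r, s, t\bigr)
\]
and
\[
\alpha\mu_{{}_4}(\varPhi, P, Q, r, s, t) = \bigl(\alpha(\lambda\gamma\varPhi\gamma\lambda^{-1})\alpha^{-1},\, -\alpha\lambda\gamma P,\, -\alpha\lambda\gamma Q,\, r, s, t\bigr).
\]
Using $\alpha\lambda\gamma = \lambda\gamma\alpha$ and $\alpha^{-1}\gamma\lambda^{-1} = \gamma\lambda^{-1}\alpha^{-1}$ from the first preliminary fact, the two $\mathfrak{P}^C$-components coincide, the $\mathfrak{e}_7^C$-component coincides since $\lambda\gamma\alpha\varPhi\alpha^{-1}\gamma\lambda^{-1} = \alpha\lambda\gamma\varPhi\gamma\lambda^{-1}\alpha^{-1}$, and the three scalar slots are untouched by both maps. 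Hence $\mu_{{}_4}\alpha = \alpha\mu_{{}_4}$, i.e.\ $\alpha \in (E_8)^{{}_{\mu_{{}_4}}}$; since $(E_7)^{\lambda\gamma} \subset E_7 \subset E_8$, this gives the asserted inclusion.

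A slightly more conceptual packaging, which I might prefer in the write-up, is to observe that $\mu_{{}_4} = \upsilon\circ(\lambda\gamma)$ as transformations of ${\mathfrak{e}_8}^C$, where on the right $\lambda\gamma$ denotes the action of the element $\lambda\gamma \in E_7 \subset E_8$ and $\upsilon$ is the transformation of Section~5, for which $\upsilon(\varPhi, P, Q, r, s, t) = (\varPhi, -P, -Q, r, s, t)$ — this is exactly the sign discrepancy between the two displayed formulas above. Since $\upsilon$ commutes with every element of $E_7 \subset E_8$ (a one-line check on the six-component model, using that $E_7$ acts on each slot as recalled above), any $\alpha$ commuting with $\lambda\gamma$ in $E_7$ automatically commutes with $\mu_{{}_4} = \upsilon(\lambda\gamma)$, which recovers the claim.

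I do not expect a genuine obstacle here: the statement is essentially bookkeeping with the six-component model. The one point that needs a little care is that $\lambda\gamma$, viewed inside $E_7 \subset E_8$, really does induce conjugation $\varPhi \mapsto (\lambda\gamma)\varPhi(\gamma\lambda^{-1})$ on the $\mathfrak{e}_7^C$-slot together with the evident action on the two $\mathfrak{P}^C$-slots; this is built into the Imai--Yokota embedding $E_7 \hookrightarrow E_8$ and is compatible with the bracket used in Case~1, so it may simply be cited rather than re-derived.
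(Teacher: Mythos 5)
Your proposal is correct and follows essentially the same route as the paper: a direct comparison of $\mu_{{}_4}\alpha$ and $\alpha\mu_{{}_4}$ on a general element $(\varPhi,P,Q,r,s,t)$, using that the defining relation of $(E_7)^{\lambda\gamma}$ amounts to $\alpha$ commuting with $\lambda\gamma$ and that $E_7$ acts trivially on the three scalar slots. The alternative packaging $\mu_{{}_4}=\upsilon\circ(\lambda\gamma)$ is a harmless reformulation of the same computation.
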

\begin{proof}
	Let $\alpha \in (E_7)^{\lambda\gamma}$.  
	Note that $-1 \in z(E_7)$ (the center of $E_7$),  we have the following 
	\begin{align*}
	\mu_{{}_4} \alpha(\varPhi, P, Q, r, s, t)&=\mu_{{}_4} (\alpha\varPhi\alpha^{-1}, \alpha P, \alpha Q, r, s, t)\\
	&=(\lambda\gamma \alpha \varPhi \alpha^{-1} \gamma\lambda^{-1}, -\lambda\gamma \alpha P, -\lambda\gamma \alpha Q, r, s, t)\\
	&=(\alpha(\lambda\gamma \varPhi \gamma\lambda^{-1})\alpha^{-1}, \alpha(-\lambda\gamma  P), \alpha(-\lambda\gamma  Q), r, s, t)\\
	&=\alpha\mu_{{}_4} (\varPhi, P, Q, r, s, t),\,\,(\varPhi, P, Q, r, 
	s, t) \in {\mathfrak{e}_8}^{C}, 
	\end{align*}
	that is, $\mu_{{}_4} \alpha=\alpha\mu_{{}_4} $.
	Hence we have the required result $(E_7)^{\lambda\gamma} \subset 
	(E_8)^{{}_{\mu_{{}_4}}}$.
\end{proof}
\vspace{1mm}

Now, we will determine the structure of the group $(E_8)^{{}_{\mu_{{}_4}}}$.

\begin{thm}\label{theorem 6.4}
	The group $(E_8)^{{}_{\mu_{{}_4}}}$ is isomorphic to the group $(SU(2) \times SU(8))/\Z_4, \Z_4=\{(E,E), (E, \allowbreak -E), (-E, e_1E), (-E, -e_1E)\}${\rm :} $(E_8)^{{}_{\mu_{{}_4}}} \cong  (SU(2) \times SU(8))/\Z_4$.
\end{thm}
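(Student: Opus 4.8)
The plan is to imitate the proof of Theorem~\ref{theorem 5.3}, exploiting $(\mu_{{}_4})^2 = \upsilon$ together with the known structure of $(E_8)^\upsilon$. The first observation I would make is that $\mu_{{}_4}$, as a $C$-linear transformation of ${\mathfrak{e}_8}^{C}$, coincides with the action of the element $-\lambda\gamma \in E_7 \subset E_8$: indeed $-1 \in z(E_7)$ acts by $(\varPhi, P, Q, r, s, t) \mapsto (\varPhi, -P, -Q, r, s, t)$, while $\lambda\gamma \in E_7$ acts by $(\varPhi, P, Q, r, s, t) \mapsto ((\lambda\gamma)\varPhi(\gamma\lambda^{-1}), (\lambda\gamma)P, (\lambda\gamma)Q, r, s, t)$ (using $(\lambda\gamma)^{-1} = \gamma\lambda^{-1}$, since $\gamma^2 = 1$), and the composite of these is exactly the defining formula of $\mu_{{}_4}$. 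In particular $\mu_{{}_4} \in E_7$, which is the point that makes the next step clean.

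Next, recall from \cite[Theorem 5.7.6]{iy0} that $\varphi_{{}_\upsilon} : SU(2) \times E_7 \to (E_8)^\upsilon$, $\varphi_{{}_\upsilon}(A,\delta) = \phi_{{}_\upsilon}(A)\delta$, is a surjective homomorphism with kernel $\{(E,1),(-E,-1)\}$; in particular $\phi_{{}_\upsilon}(SU(2))$ and $E_7$ commute elementwise in $E_8$. Since $(\mu_{{}_4})^2 = \upsilon$ we have $(E_8)^{{}_{\mu_{{}_4}}} \subset (E_8)^\upsilon$, so any $\alpha \in (E_8)^{{}_{\mu_{{}_4}}}$ equals $\varphi_{{}_\upsilon}(A,\delta)$ for some $A \in SU(2), \delta \in E_7$. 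Because $\mu_{{}_4} = -\lambda\gamma \in E_7$, conjugation by $\mu_{{}_4}$ fixes $\phi_{{}_\upsilon}(A)$ and sends $\delta$ to $(\lambda\gamma)\delta(\gamma\lambda^{-1})$, whence $\mu_{{}_4}\alpha{\mu_{{}_4}}^{-1} = \varphi_{{}_\upsilon}(A,(\lambda\gamma)\delta(\gamma\lambda^{-1}))$. Comparing this with $\alpha$ modulo $\Ker\,\varphi_{{}_\upsilon}$ and discarding the branch that would force $A = -A$, one obtains that $\alpha \in (E_8)^{{}_{\mu_{{}_4}}}$ if and only if $\delta \in (E_7)^{\lambda\gamma}$. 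Thus $\varphi_{{}_\upsilon}$ restricts to a surjection $SU(2) \times (E_7)^{\lambda\gamma} \to (E_8)^{{}_{\mu_{{}_4}}}$ whose kernel is $\{(E,1),(-E,-1)\}$ (contained in the source since $-1 \in z(E_7) \subset (E_7)^{\lambda\gamma}$), so
\begin{align*}
(E_8)^{{}_{\mu_{{}_4}}} \cong \bigl(SU(2) \times (E_7)^{\lambda\gamma}\bigr)/\Z_2, \qquad \Z_2 = \{(E,1),(-E,-1)\}.
\end{align*}

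The remaining step — and the one I expect to be the main obstacle — is to identify $(E_7)^{\lambda\gamma}$ with $SU(8)/\Z_2$, $\Z_2 = \{\pm E\}$. Here I would use the standard homomorphism $h : SU(8) \to E_7$ coming from the $\mathfrak{su}(8)$-module structure of the Freudenthal space $\mathfrak{P}^C$ (whose $56$ restricts to $\varLambda^2 C^8$ together with its conjugate), for which the central element $\zeta E$ of $SU(8)$ acts through $\zeta^{\pm2}$; consequently $\Ker\,h = \{\pm E\}$ and $h(e_1 E) = h(-e_1 E) = -1 \in E_7$ where $e_1 \in C$ satisfies $e_1^2 = -1$. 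One then checks $h(SU(8)) \subset (E_7)^{\lambda\gamma}$ — this is where the explicit $\lambda$ and $\gamma$ enter — and, since both groups are connected (fixed point subgroups of inner automorphisms of the simply connected compact groups $E_7$ and $E_8$ are connected) and of the common dimension $63 = \dim\,((\mathfrak{e}_7)^{\lambda\gamma})$ (Lemma~\ref{lemma 6.1}), concludes $(E_7)^{\lambda\gamma} = h(SU(8)) \cong SU(8)/\{\pm E\}$, with $-1 \in E_7$ corresponding to the coset $\{e_1 E, -e_1 E\}$. Substituting this into the displayed isomorphism, the composite surjection $SU(2)\times SU(8) \to SU(2) \times (E_7)^{\lambda\gamma} \to (E_8)^{{}_{\mu_{{}_4}}}$ has kernel consisting of the $(A,B)$ with $(A, h(B)) \in \{(E,1),(-E,-1)\}$, namely $\{(E,E),(E,-E)\}$ from $A = E$, $B \in \Ker\,h$, and $\{(-E,e_1E),(-E,-e_1E)\}$ from $A = -E$, $h(B) = -1$; since $(-E,e_1E)^2 = (E,-E)$ this kernel is cyclic of order four, so $(E_8)^{{}_{\mu_{{}_4}}} \cong (SU(2)\times SU(8))/\Z_4$ with exactly the $\Z_4$ in the statement. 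The delicate verifications are $h(SU(8)) \subset (E_7)^{\lambda\gamma}$ and the precise location of the central element $e_1 E$ of $SU(8)$ sent to $-1 \in E_7$, on which the shape of the final $\Z_4$ depends.
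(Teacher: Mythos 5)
Your proposal is correct and follows essentially the same route as the paper: reduce to $(E_8)^\upsilon \cong (SU(2)\times E_7)/\Z_2$ via $\varphi_{{}_\upsilon}$, show that the $E_7$-component of any $\alpha \in (E_8)^{{}_{\mu_{{}_4}}}$ must lie in $(E_7)^{\lambda\gamma}$, and then feed in the double cover $SU(8) \to (E_7)^{\lambda\gamma}$ to assemble the stated $\Z_4$. Two points of comparison are worth recording. First, your identification of $\mu_{{}_4}$ with the action of $-\lambda\gamma \in E_7 \subset E_8$ is correct (the central sign disappears under conjugation on $\varPhi$ and supplies the minus signs on $P,Q$), and it gives a clean, self-contained reason for both $\mu_{{}_4}\phi_{{}_\upsilon}(A)\mu_{{}_4}^{-1}=\phi_{{}_\upsilon}(A)$ and $\mu_{{}_4}\delta\mu_{{}_4}^{-1}=(\lambda\gamma)\delta(\gamma\lambda^{-1})$; the paper instead writes $\mu_{{}_4}=\mu\lambda\gamma$ and appeals to a lemma of Yokota for the first identity. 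Second, for the last ingredient the paper does not re-derive anything: it cites Yokota's Theorem 4.12.5, which already provides a homomorphism $\varphi_{{}_{SU(8)}}:SU(8)\to (E_7)^{\tau\gamma}=(E_7)^{\lambda\gamma}$ that is surjective with kernel $\{E,-E\}$ and satisfies $\varphi_{{}_{SU(8)}}(e_1E)=\varphi_{{}_{SU(8)}}(-e_1E)=-1$ (here $e_1$ is a unit of the Cayley algebra, $e_1^2=-1$, not $i\in C$). The ``delicate verifications'' you flag — that the image lands in $(E_7)^{\lambda\gamma}$ and the precise preimage of $-1$ — are exactly the content of that cited theorem, so your connectedness-plus-dimension argument is a workable but redundant substitute; note also that $\dim\,((\mathfrak{e}_7)^{\lambda\gamma})=63$ is not actually stated in Lemma \ref{lemma 6.1} and would need a separate computation. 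Your kernel computation, including that it is cyclic of order four generated by $(-E,e_1E)$, agrees with the paper.
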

\begin{proof}
	We define a mapping $\varphi_{{}_{\mu_{{}_4}}}: SU(2) \times SU(8) \to (E_8)^{{}_{\mu_{{}_4}}}$ by
	$$
	\varphi_{{}_{\mu_{{}_4}}}(A, L)=\phi_{{}_\upsilon}(A)\varphi_{{}_{SU(8)}}(L),
	$$
	where the mapping $\varphi_{{}_{SU(8)}}: SU(8) \to (E_7)^{\tau\gamma}=(E_7)^{\lambda\gamma} \subset E_7$ is same one as $\varphi$ defined in \cite[Theorem 4.12.5]{iy0}. From Propositions \ref{proposition 6.2}, \ref{proposition 6.3}, we see that $\varphi_{{}_{\mu_{{}_4}}}$ is well-defined. 
	Subsequently, as in the proof of Theorem \ref{theorem 5.3}, since the mapping $\varphi_{{}_{\mu_{{}_4}}}$ is the restriction mapping of $\varphi_{{}_\upsilon}: SU(2) \times E_7 \to (E_8)^\upsilon$ (\cite[Theorem 5.7.6]{iy0}), $\varphi_{{}_{\mu_{{}_4}}}$ is a homomorphism.
	 
	Next, we will prove that $\varphi_{{}_{\mu_{{}_4}}}$ is surjective. Let $\alpha \in (E_8)^{{}_{\mu_{{}_4}}} \subset (E_8)^\upsilon$. Then there exist $A \in SU(2)$ and $\delta \in E_7$ such that $\alpha=\varphi_{{}_\upsilon}(A, \delta)$. Moreover, from the condition ${\mu_{{}_4}} \alpha {{\mu_{{}_4}}}^{-1}=\alpha $, that is, ${\mu_{{}_4}} \varphi_{{}_\upsilon}(A, \delta) {{\mu_{{}_4}}}^{-1}=\varphi_{{}_\upsilon}(A, \delta)$, this relational formula is expressed by
\begin{align*}
			\varphi_{{}_\upsilon}(A, (\lambda\gamma) \delta (\gamma{\lambda}^{-1}))=\varphi_{{}_\upsilon}(A, \delta).
\end{align*}	
    Indeed, from $\mu_{{}_4}=\mu\lambda\gamma$ and \cite[Lemma 5.4.3]
    {iy1} we easily see $\mu_{{}_4} \phi_{{}_\upsilon}(A) {\mu_{{}_4}}
    ^{-1}=
    \phi_{{}_\upsilon} (A)$,
\if0
    from $\mu_{{}_4} (0, 0, 0, i\nu, \rho, -\tau\rho)=(0, 0, 0, i\nu, $ $\rho, -\tau\rho)$, using $\phi_\upsilon(A)={\rm \exp}({\rm ad}(0, 0, 0, i\nu, $ $\rho, -\tau\rho))$, we have that 
\begin{align*}
	\mu_{{}_4} \phi_\upsilon(A) {\mu_{{}_4}}^{-1}&=\mu_{{}_4} {\rm \exp}({\rm ad}(0, 0, 0, i\nu, \rho, -\tau\rho)){\mu_{{}_4}}^{-1} \\
	&={\rm \exp}(\mu_{{}_4}{\rm ad}(0, 0, 0, i\nu, \rho, -\tau\rho){\mu_{{}_4}}^{-1})\\
	&={\rm \exp}({\rm ad}(\mu_{{}_4}(0, 0, 0, i\nu, \rho, -\tau\rho)))\\
	&={\rm \exp}({\rm ad}(0, 0, 0, i\nu, \rho, -\tau\rho))\\
	&=\phi_\upsilon(A),
\end{align*}
\fi
	and it is clear that $\mu_{{}_4} \delta {\mu_{{}_4}}^{-1}=(\lambda\gamma)\delta(\gamma\lambda^{-1})$ because of $\delta \in E_7$.
	Hence it follows from
	\begin{align*}
	\mu_{{}_4} \varphi_{{}_\upsilon}(A,\delta) {\mu_{{}_4}}^{-1}
	&=\mu_{{}_4} (\phi_{{}_\upsilon}(A) \delta ){\mu_{{}_4}}^{-1}
	=(\mu_{{}_4} \phi_{{}_\upsilon}(A){\mu_{{}_4}}^{-1})(\mu_{{}_4} \delta {\mu_{{}_4}}^{-1})
	\\
	&=\phi_{{}_\upsilon}(A)((\lambda\gamma)\delta(\gamma\lambda^{-1}))
	\\
	&=\varphi_{{}_\upsilon}(A,(\lambda\gamma)\delta(\gamma\lambda^{-1}) )
	\end{align*}
	that $\varphi_{{}_\upsilon}(A, (\lambda\gamma) \delta (\gamma{\lambda}^{-1}))=\varphi_{{}_\upsilon}(A, \delta)$.
	
	\noindent Thus we have the following
	$$
	\left\{\begin{array}{l}
	A = A
	\vspace{1mm}\\
	(\lambda\gamma)\delta (\gamma{\lambda}^{-1}) = \delta
	\end{array} \right.         
	\qquad   \text{or}\qquad
	\left\{\begin{array}{l}
	A= -A
	\vspace{1mm}\\
	(\lambda\gamma)\delta (\gamma{\lambda}^{-1}) = -\delta.
	\end{array} \right.   
	$$
	In the latter case, this case is impossible because of $A\not=0$.
	In the former case, it is clear $ A \in SU(2) $, as for the 
second condition, there exists $L \in 
SU(8)$ such that $\delta=\varphi_{{}_{SU(8)}}(L)$ from \cite[Theorem 4.12.5]{iy0}. 
Hence there exist $ A \in SU(2) $ and $ L \in SU(8) 
$ such that $ \alpha=\varphi_{{}_\upsilon}(A,\varphi_{{}_{SU(8)}}(L))=
\varphi_{{}_{\mu_4}}(A,L) $.
The proof of surjective is completed.
	
Finally, it is not difficult to obtain ${\rm Ker}\,\varphi_{{}_{\mu_{{}
_4}}}=\{(E,E), (E,-E), (-E, e_1E), (-E, -e_1E) \} \allowbreak \cong 
\Z_4$, where 
$e_1$ is one of basis in $\mathfrak{C}$. Indeed, first it follows from 
the definition of kernel that
\begin{align*}
\Ker\,\varphi_{{}_{\mu_{{}_4}}}&=\{(A, L) \in SU(2) \times SU(8)\,|\, 
\varphi_{{}_{\mu_{{}_4}}}(A,L)=1  \}\\
&=\{(A, L) \in SU(2)  \times SU(8)\,|\, \phi_\upsilon(A)\varphi_{{}
_{SU(8)}}(L)=1 \}.
\end{align*}
Here, since the mapping $\varphi_{{}_{\mu_{{}_4}}}$ is the restriction 
of the mapping $\varphi_{{}_\upsilon}:SU(2) \times E_7 \to (E_8)^
\upsilon$, 
it follows from ${\rm Ker}\,\varphi_{{}_\upsilon}=\{(E,1), (-E, -1) \}$ 
(\cite[Theorem 
5.7.6]{iy0}) that 
\begin{align*}
\Ker\,\varphi_{{}_{\mu_{{}_4}}}&=\{(A, L) \in SU(2) \times SU(8)\,|\, 
A=E, \varphi_{{}_{SU(8)}}(L) = 1\}
\\
&\cup \{(A, L) \in SU(2) \times SU(8)\,|\, A=-E, \varphi_{{}_{SU(8)}}(L) 
= -1\}.
\end{align*}	
In the former case, from $\Ker\,\varphi_{{}_{SU(8)}}=\{E,-E \}$, we have
 the following  
	$$
	\left\{\begin{array}{l}
	A = E
	\vspace{1mm}\\
	L=E
	\end{array} \right.         
	\qquad   \text{or}\qquad
	\left\{\begin{array}{l}
	A= E
	\vspace{1mm}\\
	L= -E .
	\end{array} \right.   
	$$
	In the latter case, using $-1=\varphi_{{}_{SU(8)}}(e_1E)=\varphi_{{}
	_{SU(8)}}(-e_1E)$, we have the following
	$$
	\left\{\begin{array}{l}
	A = -E
	\vspace{1mm}\\
	L=e_1E
	\end{array} \right.         
	\qquad   \text{or}\qquad
	\left\{\begin{array}{l}
	A= -E
	\vspace{1mm}\\
	L= -e_1E .
	\end{array} \right.   
	$$
	Hence we have the required result
	\begin{align*}
	\Ker\,\varphi_{{}_{\mu_{{}_4}}}&=\{(E,E), (E,-E), (-E, e_1E), (-E, -e_1E)  \} \cong \Z_4.
	\end{align*}
	
	Therefore, we have the desired isomorphism
	\begin{align*}
	(E_8)^{{}_{\mu_{{}_4}}} \cong  (SU(2) \times SU(8))/\Z_4.
	\end{align*}
\end{proof}

\end{document}